\newtheorem{theorem}{Theorem}[section]
\newtheorem{lemma}{Lemma}[section]
\theoremstyle{definition}
\theoremstyle{remark}
\newtheorem{remark}{Remark}[section]
\numberwithin{equation}{section}
\theoremstyle{plain}
\newtheorem{proposition}{Proposition}[section]
\newtheorem{example}{Example}[section]
\begin{document}
\title[Local Limit Theorems for Hitting- and Return Times]{\textbf{Local Limit Theorems for Hitting Times and Return Times of small sets}}
\author{Max Auer}
\address{Department of Mathematics, University of Maryland, College Park, MD 20742-401,
USA }
\email{mauer96@umd.edu}
\author{Roland Zweim\"{u}ller}
\address{Fakult\"{a}t f\"{u}r Mathematik, Universit\"{a}t Wien, Oskar-Morgenstern-Platz
1, 1090 Vienna, Austria }
\email{roland.zweimueller@univie.ac.at}
\urladdr{http://www.mat.univie.ac.at/\symbol{126}zweimueller/ }
\keywords{return time statistics, hitting time statistics, rare events}

\begin{abstract}
We establish abstract local limit theorems for hitting times and return times
of suitable sequences $(A_{l})$ of asymptotically rare events in ergodic
probability preserving dynamical systems, including versions for tuples of
consecutive times and positions of the hits. These results are shown to apply
in the setup of Gibbs-Markov systems. \newline\newline

2010 Mathematics Subject Classification: 28D05, 37A25, 37A50, 37C30, 60F05, 11K50.

\end{abstract}
\maketitle

\section{Introduction}%

\noindent
\textbf{Setup and notation.} Let $(X,\mathcal{A},\mu)$ be a probability space,
and $T:X\rightarrow X$ an ergodic $\mu$-preserving map. Take any
$A\in\mathcal{A}$ with $\mu(A)>0$. By ergodicity and the Poincar\'{e}
recurrence theorem, the measurable \emph{(first) hitting time} function of $A
$, $\varphi_{A}:X\rightarrow\overline{\mathbb{N}}:=\{1,2,\ldots,\infty\}$ with
$\varphi_{A}(x):=\inf\{n\geq1:T^{n}x\in A\}$, is finite a.e. on $X$. When
restricted to $A$ it is called the \emph{(first) return time} of that set. Let
$T_{A}x:=T^{\varphi_{A}(x)}x$ for a.e. $x\in X$, which gives the \emph{first
entrance map} $T_{A}:X\rightarrow A$. Its restriction to $A$, the \emph{first
return map} $T_{A}:A\rightarrow A$, is an ergodic measure preserving map on
the probability space $(A,\mathcal{A}\cap A,\mu_{A})$, where $\mu_{A}%
(B):=\mu(A\cap B)/\mu(A)$, $B\in\mathcal{A}$. By Kac' formula, $\int
_{A}\varphi_{A}\,d\mu_{A}=1/\mu(A)$. We will often normalize these functions
accordingly, thus considering $\mu(A)\,\varphi_{A}$.

Our focus will be on hitting times and return times in the limit of very small
sets. Call $(A_{l})_{l\geq1}$ a \emph{sequence of asymptotically rare events}
provided that $A_{l}\in\mathcal{A}$ and $0<\mu(A_{l})\rightarrow0$. There is
now a well-developed theory of distributional convergence of the variables
$\varphi_{A_{l}}$ in such a situation. Here it is important to distinguish
between \emph{convergence in law of (normalized) hitting times},
\begin{equation}
\mu(\mu(A_{l})\varphi_{A_{l}}\leq t)\Longrightarrow F(t)\text{ \quad as
}l\rightarrow\infty\text{,}\label{Eq_IntroHTS}%
\end{equation}
and \emph{convergence in law of (normalized) return times},%
\begin{equation}
\mu_{A_{l}}(\mu(A_{l})\varphi_{A_{l}}\leq t)\Longrightarrow\widetilde
{F}(t)\text{ \quad as }l\rightarrow\infty\text{,}\label{Eq_IntroRTS}%
\end{equation}
the difference being that the functions $\mu(A_{l})\varphi_{A_{l}}$ are viewed
as random variables on one and the same probability space $(X,\mathcal{A}%
,\mu)$ in the first case, but are regarded as variables on the spaces
$(X,\mathcal{A}\cap A_{l},\mu_{A_{l}})$ in the second instance. In
(\ref{Eq_IntroHTS}) and (\ref{Eq_IntroRTS}), $F$ and $\widetilde{F}$ are
sub-probability distribution functions on $[0,\infty)$, and we write
$F_{l}(t)\Rightarrow F(t)$ to indicate that $F_{l}(t)\rightarrow F(t)$ at
every continuity point $t$ of $F$. As reviewed below (Theorem
\ref{T_HittingTimeVsReturnTimeLimits}), assertions (\ref{Eq_IntroHTS}) and
(\ref{Eq_IntroRTS}) are intimately related to each other. There is a large
body of literature discussing these (and related) types of limit theorems,
both in the setup of abstract ergodic theory, and in the framework of specific
families of dynamical systems. A particularly prominent situation is that of
convergence to a \emph{normalized exponential law}, meaning that
$F(t)=\widetilde{F}(t)=1-e^{-t}$ for $t\geq0$ (which is the only case where
$F=\widetilde{F}$).%

\vspace{0.3cm}%

The present paper discusses the question whether distributional limit theorems
of the type (\ref{Eq_IntroHTS}) or (\ref{Eq_IntroRTS}) can be improved to
obtain sharper \emph{local limit theorems} (\emph{LLTs}) for the
integer-valued variables $\varphi_{A_{l}}$, identifying the asymptotics, as
$l\rightarrow\infty$, of $\mu(\varphi_{A_{l}}=k_{l})$ and $\mu_{A_{l}}%
(\varphi_{A_{l}}=k_{l})$, respectively, for individual integer values $k_{l}$
on the correct scale. It seems that no results of this type are available so far.%

\vspace{0.3cm}%

Slightly extending the standard notation for asymptotic equivalence, we write
$a_{n}\sim c\cdot b_{n}$ to indicate that $b_{n}>0$ for large $n$, and
$a_{n}/b_{n}\rightarrow c\in\lbrack0,\infty)$. (This defines a relation $\sim
c\,\cdot$. In case $c=0$, $a_{n}\sim c\cdot b_{n}$ means that $a_{n}=o(b_{n}%
)$). We shall provisionally say that $(A_{l})$ satisfies a \emph{local limit
theorem for hitting times} if
\begin{equation}
\mu(\varphi_{A_{l}}=k_{l})\sim F^{\prime}(t)\cdot\mu(A_{l})\text{ \quad as
}l\rightarrow\infty
\end{equation}
whenever the $k_{l}$ are integers which satisfy $\mu(A_{l})k_{l}\rightarrow
t\ $for (suitable) $t>0$, and that $(A_{l})$ satisfies a \emph{local limit
theorem for return times} if, for such $(k_{l})$,%
\begin{equation}
\mu_{A_{l}}(\varphi_{A_{l}}=k_{l})\sim\widetilde{F}^{\prime}(t)\cdot\mu
(A_{l})\text{ \quad as }l\rightarrow\infty\text{.}%
\end{equation}
(To see that the derivatives on the right-hand sides make sense, recall that
any limit $F$ of hitting time distributions as in (\ref{Eq_IntroHTS}) is
necessarily concave and (hence) absolutely continuous with a Lebesgue almost
everywhere defined derivative (see Theorem
\ref{T_HittingTimeVsReturnTimeLimits} and Remark \ref{Rem_PropF}\ below for
details). For definiteness, we let $F^{\prime}$ denote the unique
right-continuous version of the derivative which has left-hand limits
everywhere. This is a non-increasing function on $[0,\infty)$. Regarding
return-times, we shall focus on situations in which $\widetilde{F}^{\prime}$
can be given explicitly.) It will be convenient to have a shorthand notation
for the collection of sequences $(k_{l})$ of integers which asymptotically
have the correct order of magnitude in that $(\mu(A_{l})k_{l})_{l\geq1}$ is
bounded away from $0$ and $\infty$. We therefore write, for $\delta\in(0,1]$,
\begin{gather}
\mathcal{K}_{\delta}(A_{l}):=\left\{  (k_{l})_{l\geq1}:k_{l}\in\mathbb{N}%
,\,\delta\leq\mu(A_{l})k_{l}\leq\delta^{-1}\text{ for }l\geq1\right\}
\text{,}\nonumber\\
\text{and \quad}\mathcal{K}(A_{l}):=%
{\textstyle\bigcup\nolimits_{\delta\in(0,1]}}
\mathcal{K}_{\delta}(A_{l}).\label{Eq_DefCurlyK}%
\end{gather}
%

\vspace{0.3cm}%
%

\noindent
\textbf{Main results.} It turns out that distributional convergence of
\emph{hitting times} automatically entails a corresponding LLT in many
situations (Theorem \ref{T_LLT_Hit} below). In the special case of an
exponential limit, this result becomes

\begin{theorem}
[\textbf{Automatic LLT for exponential hitting times}]\label{T_AutoLLT_Hit}Let
$T$ be an ergodic measure-preserving map on the probability space
$(X,\mathcal{A},\mu)$, and $(A_{l})_{l\geq1}$ a sequence in $\mathcal{A}$ such
that $0<\mu(A_{l})\rightarrow0$. Then the normalized hitting times converge to
an exponential law with expectation $1/\theta$ (necessarily with $\theta
\in(0,1]$),
\begin{equation}
\mu(\mu(A_{l})\varphi_{A_{l}}\leq t)\Longrightarrow1-e^{-\theta t}\text{ \quad
as }l\rightarrow\infty\text{,}%
\end{equation}
if and only if for every $\delta\in(0,1]$,
\begin{equation}
\mu(\varphi_{A_{l}}=k_{l})\sim\theta e^{-\theta\mu(A_{l})k_{l}}\,\mu
(A_{l})\text{ \quad}%
\begin{array}
[c]{c}%
\text{as }l\rightarrow\infty\text{,}\\
\text{uniformly in }(k_{l})\in\mathcal{K}_{\delta}(A_{l})\text{.}%
\end{array}
\label{Eq_AutoLLT_HitExp}%
\end{equation}
In particular,%
\begin{gather}
\mu(\mu(A_{l})\varphi_{A_{l}}\leq t)\Longrightarrow1-e^{-t}\text{ as
}l\rightarrow\infty\text{\ \quad holds if and only if}\\
\mu(\varphi_{A_{l}}=k_{l})\sim e^{-\mu(A_{l})k_{l}}\,\mu(A_{l})\text{\quad
uniformly in }(k_{l})\in\mathcal{K}_{\delta}(A_{l})\text{ for all }%
\delta>0\text{.}\nonumber
\end{gather}

\end{theorem}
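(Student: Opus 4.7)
The plan rests on the classical identity
\[
\mu(\varphi_A = n) \;=\; \mu(A)\,\mu_A(\varphi_A \geq n)\qquad (n \geq 1),
\]
valid whenever $T$ is $\mu$-preserving and $\mu(A)>0$. It follows from the set-theoretic equality $\{\varphi_A = n\} = T^{-1}(\{\varphi_A = n-1\}\cap A^c)$ for $n\geq 2$, which, after applying $\mu$, gives the recursion $\mu(\varphi_A=n)-\mu(\varphi_A=n+1) = \mu(A)\,\mu_A(\varphi_A=n)$; summation from the base case $\mu(\varphi_A=1)=\mu(A)$ closes the derivation. This identity couples the pointwise LLT on the left with the distributional behaviour of the return-time tail on the right.

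For the direction ``weak convergence $\Rightarrow$ uniform LLT'', I would first apply Theorem~\ref{T_HittingTimeVsReturnTimeLimits} to translate the assumed hitting-time limit $F(t)=1-e^{-\theta t}$ into the corresponding return-time limit: since $F'(t)=\theta e^{-\theta t}$, the general hitting/return-time correspondence forces
\[
\widetilde F(t)\;=\;1-\theta e^{-\theta t}\qquad (t>0),
\]
continuous on $(0,\infty)$ (with a jump of size $1-\theta$ at $0$ when $\theta<1$). For any $(k_l)\in\mathcal{K}_\delta(A_l)$, passing to a subsequence along which $k_l\mu(A_l)\to t\in[\delta,\delta^{-1}]$, the continuity of $\widetilde F$ at $t$ gives $\mu_{A_l}(\varphi_{A_l}\geq k_l) = 1 - \mu_{A_l}(\varphi_{A_l}\leq k_l-1)\to \theta e^{-\theta t}$, and the identity above then yields $\mu(\varphi_{A_l}=k_l)/\mu(A_l)\to\theta e^{-\theta t}$, which is the desired pointwise LLT. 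Uniformity over $\mathcal{K}_\delta(A_l)$ falls out of a standard subsequence/contradiction argument: any $(k_l^*)$ realising the supremum of the error still lies in $\mathcal{K}_\delta(A_l)$, every accumulation point of $\mu(A_l)k_l^*$ belongs to $[\delta,\delta^{-1}]\subset(0,\infty)$ and is therefore a continuity point of $\widetilde F$, forcing the error along that subsequence to $0$.

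For the converse direction I would simply sum the LLT. Fixing $t>0$ and $0<\delta<t$,
\[
\mu\bigl(\mu(A_l)\varphi_{A_l}\leq t\bigr) \;=\; \mu\bigl(\varphi_{A_l}\leq \delta/\mu(A_l)\bigr) \;+\; \sum_{\delta/\mu(A_l)<k\leq t/\mu(A_l)}\mu(\varphi_{A_l}=k).
\]
The first summand is $\leq \delta$ by the union bound $\mu(\varphi_{A_l}\leq N)\leq N\mu(A_l)$, and the second is a Riemann sum which, by the uniform LLT on $\mathcal{K}_\delta(A_l)$, converges to $\int_\delta^t\theta e^{-\theta s}\,ds = e^{-\theta\delta}-e^{-\theta t}$. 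Letting $\delta\downarrow 0$ recovers $1-e^{-\theta t}$. Since the substantive content is really the identity together with Theorem~\ref{T_HittingTimeVsReturnTimeLimits}, I expect no genuine obstacle; the only point requiring mild care is ensuring the uniformity argument handles every admissible integer $k$ at once, which it does precisely because $\widetilde F$ is continuous throughout $(0,\infty)$.
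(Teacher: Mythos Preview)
Your argument is correct and, for the forward direction, essentially identical to the paper's: both rest on the identity $\mu(\varphi_A=k)=\mu(A)\,\mu_A(\varphi_A\geq k)$ (the paper writes it as $\mu(\varphi_A=k)=\mu(A\cap\{\varphi_A\geq k\})$ and derives it in one line from $\{\varphi_A>k\}=T^{-1}(A^c\cap\{\varphi_A\geq k\})$), then invoke Theorem~\ref{T_HittingTimeVsReturnTimeLimits} to obtain $\widetilde F(t)=1-\theta e^{-\theta t}$, and finally upgrade to uniformity on $[\delta,\delta^{-1}]$ via continuity of $\widetilde F$ there. (A tiny quibble: in your Riemann-sum step you implicitly need $t\leq\delta^{-1}$ as well, so take $\delta<\min(t,t^{-1})$ rather than just $\delta<t$.)

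For the converse the approaches diverge. The paper proves the general statement Theorem~\ref{T_LLT_Hit}(b) by Helly selection: pass to a subsequence along which the hitting-time laws converge to some $H$, apply the already-proved part~(a) to identify $H'=G$ a.e., hence $H=F$; a subsequence-of-subsequence argument then gives full convergence. Your Riemann-sum argument is more direct and more elementary for the specific exponential case: the uniform LLT on $\mathcal{K}_\delta(A_l)$ lets you sum $\mu(\varphi_{A_l}=k)$ over $\delta/\mu(A_l)<k\leq t/\mu(A_l)$ and recognise a Riemann sum for $\int_\delta^t\theta e^{-\theta s}\,ds$, while the union bound $\mu(\varphi_{A_l}\leq N)\leq N\mu(A_l)$ controls the tail near $0$. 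The paper's route has the advantage of working for arbitrary limit laws $F$ (and only needing the pointwise, non-uniform LLT hypothesis), whereas yours exploits the explicit form and the assumed uniformity but avoids the compactness machinery.
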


\begin{remark}
[\textbf{A comment on the parameter} $\theta$]\label{Rem_CommentTheta}The most
common case is that of a normalized exponential law ($\theta=1$). In many
systems with some hyperbolicity, this is what happens if $(A_{l})$ is a
sequence of sufficiently regular sets which shrink to a typical point
$x^{\ast}\in X$. In the same setup, different pairs $(F,\widetilde{F})$ of
limit laws can arise in (\ref{Eq_IntroHTS}) and (\ref{Eq_IntroRTS}) when
$x^{\ast}$ is, say, a fixed point, where one often (but not always, see
Example 4.2 of \cite{Zexceptional}) obtains $F(t)=1-e^{-\theta t}$ and
$\widetilde{F}(t)=1-\theta e^{-\theta t}$ for $t\geq0$, with $\theta\in(0,1)$.
In more abstract terms, we can express this standard scenario by asserting
that $A_{l}=A_{l}^{\bullet}\cup A_{l}^{\circ}$ (measurable and disjoint) where
the points of $A_{l}^{\bullet}=A_{l}\cap T^{-1}A_{l}$ instantly return to
$A_{l}$. In particular, $(\varphi_{A_{l}})_{l\geq1}\ $is bounded on
$A_{l}^{\bullet}$, and $\theta$ is obtained as the limit of $\mu_{A_{l}}%
(A_{l}^{\circ})\ as$ $l\rightarrow\infty$. (Also see the examples below, and
Theorem 3.8 of \cite{ZWhenAndWhere} for a functional version.)
\end{remark}%

\vspace{0.3cm}%

The case of \emph{return times} is more subtle, and we will focus on the case
of exponential limit laws there. But even then, $\mu_{A_{l}}(\mu(A_{l}%
)\varphi_{A_{l}}\leq t)\Longrightarrow1-e^{-t}$ does not in general ensure
that $\mu_{A_{l}}(\varphi_{A_{l}}=k_{l})\sim e^{-\mu(A_{l})k_{l}}\,\mu(A_{l})$
for all $(k_{l})\in\mathcal{K}(A_{l})$. We will, however, provide sufficient
abstract conditions for validity of the latter (Theorem \ref{T_LLT_Ret0}).
With the aid of an extended version of that result (Theorem
\ref{T_LLT_RetGeneral}), we will in fact establish \emph{multidimensional
local limit theorems for both return times and hitting times}, which also
allow us to \emph{include a spatial component}. See Theorem
\ref{T_LLT_ConsecutiveTimes} which is the main abstract result of the present
paper. In Section \ref{Sec_ConcreteLLTs} these general results are then
applied in some concrete standard situations (Gibbs-Markov systems). Those
include the following classical

\begin{example}
[\textbf{Large digits in continued fraction expansions}]\label{Ex_CF}We
illustrate our main results in the prominent special case of continued
fraction expansions. Set $X:=[0,1]$, $\mathcal{A}:=\mathcal{B}_{X}$ (the Borel
$\sigma$-algebra), and let $T:X\rightarrow X$ be \emph{Gauss' continued
fraction map} with $T0:=0$ and
\begin{equation}
Tx:=\dfrac{1}{x}-\left\lfloor \dfrac{1}{x}\right\rfloor =\dfrac{1}{x}-k\text{
\qquad for }x\in\left(  \dfrac{1}{k+1},\dfrac{1}{k}\right]  =:I_{k}\text{,
}k\geq1\text{,}%
\end{equation}
which preserves the probability density $h(x):=\frac{1}{\log2}\frac{1}{1+x}$,
$x\in X$. The invariant measure $\mu$ thus defined, $\mu(A):=\int
_{A}h\,d\lambda$ with $\lambda$ denoting Lebesgue measure on $X$, is exact
(and hence ergodic). Iteration of $T$ reveals the \emph{continued fraction
(CF) digits} of any $x\in X$, in that
\begin{equation}
x=\frac{1}{\mathsf{a}_{1}(x)+\dfrac{1}{\mathsf{a}_{2}(x)+\cdots}}\text{ \quad
with \quad}\mathsf{a}_{n}(x)=\mathsf{a}\circ T^{n-1}(x)\text{, }n\geq1\text{,}%
\end{equation}
where $\mathsf{a}:X\rightarrow\mathbb{N}$ is the \emph{digit function}
corresponding to $\xi:=\{I_{k}:k\geq1\}$, i.e. $\mathsf{a}(x):=\left\lfloor
1/x\right\rfloor =k$ for $x\in I_{k}$. It is a standard fact that the ergodic
measure preserving \emph{CF-system} $(X,\mathcal{A},\mu,T,\xi)$ is
Gibbs-Markov (see Section \ref{Sec_ConcreteLLTs}).

Let $\tau_{l}^{(1)}(x):=\inf\{n\geq1:\mathsf{a}_{n}(x)\geq l\}$ denote the
position of the first CF-digit no less than $l$, and define $\tau_{l}%
^{(j+1)}(x):=\inf\{n\geq1:\mathsf{a}_{\tau_{l}^{(1)}(x)+\cdots+\tau_{l}%
^{(j)}(x)+n}(x)\geq l\}$, the distance between the positions of the $j$th and
the $(j+1)$st digit in that range. A classical result with non-trivial history
(\cite{Doeblin}, \cite{I}) asserts that the positions of large digits converge
in distribution, as $l\rightarrow\infty$, to a Poisson process. Equivalently,
the sequence of consecutive distances converges to an iid sequence of
exponential random variables in that for every $d\geq1$ and $t^{(1)}%
,\ldots,t^{(d)}\geq0$,
\begin{equation}
\mu\left(  \bigcap_{j=1}^{d}\left\{  \frac{\tau_{l}^{(j)}}{l\log2}%
>t^{(j)}\right\}  \right)  \longrightarrow\prod_{j=1}^{d}e^{-t^{(j)}}\text{
\quad as }l\rightarrow\infty\text{.}\label{Eq_CF_LLT0}%
\end{equation}
Consider the values $\psi_{l}^{(j)}(x):=\mathsf{a}_{\tau_{l}^{(1)}%
(x)+\cdots+\tau_{l}^{(j)}(x)}(x)$ of the large digits thus observed. A refined
version of (\ref{Eq_CF_LLT0}) which asserts that the $\psi_{l}^{(j)}/l$ also
converge in law and are asymptotically independent of the times $\tau
_{l}^{(j)}$ has been given in Proposition 10.4 of \cite{ZWhenAndWhere}.
Theorem \ref{T_LLTGMCyls2} below implies a local version of the latter
statement: For every $d\geq1$, any sequences $(k_{l}^{(1)}),\ldots
,(k_{l}^{(d)})$ of positive integers with $(k_{l}^{(j)}/l)_{l\geq1}$ bounded
away from $0$ and $\infty$, and any sequences $(a_{l}^{(1)}),\ldots
,(a_{l}^{(d)})$ of integers $a_{l}^{(j)}\geq l$ we obtain%
\begin{equation}
\mu\left(  \bigcap_{j=1}^{d}\left\{  \tau_{l}^{(j)}=k_{l}^{(j)},\psi_{l}%
^{(j)}=a_{l}^{(j)}\right\}  \right)  \sim\prod_{j=1}^{d}\frac{e^{-\frac
{k_{l}^{(j)}}{l\log2}}}{(a_{l}^{(j)})^{2}\,\log2}\text{ \quad as }%
l\rightarrow\infty\text{.}\label{Eq_CF_LLT1}%
\end{equation}

Amusing variations follow via the same result. For instance, let
$\overline{\tau}_{l}^{(1)}(x):=\inf\{n\geq1:\mathsf{a}_{n}(x)\geq l$ and
$\mathsf{a}_{n}(x)$ prime$\}$ denote the position of the first CF-digit no
less than $l$ which is a prime number, and define $\overline{\tau}_{l}%
^{(j+1)}(x):=\inf\{n\geq1:\mathsf{a}_{\overline{\tau}_{l}^{(1)}(x)+\cdots
+\overline{\tau}_{l}^{(j)}(x)+n}(x)\geq l$ and prime$\}$. Write $\overline
{\psi}_{l}^{(j)}(x):=\mathsf{a}_{\overline{\tau}_{l}^{(1)}(x)+\cdots
+\overline{\tau}_{l}^{(j)}(x)}(x)$. Then, for every $d\geq1$, any sequences
$(k_{l}^{(1)}),\ldots,(k_{l}^{(d)})$ of positive integers with $(k_{l}%
^{(j)}/l\,\log l)_{l\geq1}$ bounded away from $0$ and $\infty$, and any
sequences $(a_{l}^{(1)}),\ldots,(a_{l}^{(d)})$ of prime numbers $a_{l}%
^{(j)}\geq l$ we get%
\begin{equation}
\mu\left(  \bigcap_{j=1}^{d}\left\{  \overline{\tau}_{l}^{(j)}=k_{l}%
^{(j)},\overline{\psi}_{l}^{(j)}=a_{l}^{(j)}\right\}  \right)  \sim\prod
_{j=1}^{d}\frac{e^{-\frac{k_{l}^{(j)}}{l\,\log l\,\log2}}}{(a_{l}^{(j)}%
)^{2}\,\log2}\text{ \quad as }l\rightarrow\infty\text{.}\label{Eq_CF_LLTprime}%
\end{equation}
Moreover, the Gauss measure $\mu$ can be replaced by Lebesgue measure
$\lambda$ in (\ref{Eq_CF_LLT1}) and (\ref{Eq_CF_LLTprime}).
\end{example}%

\vspace{0.3cm}%
%

\noindent
\textbf{Acknowledgement.} RZ thanks Beno\^{\i}t Saussol for crucial inspiring
and enlightening conversations on this topic. We are also grateful to the
careful referee. This research was supported by the Austrian Science Fund
(FWF): P 33943-N.%

\vspace{0.3cm}%

\section{Abstract Local Limit Theorems\label{Sec_AbstractLLTs}}

To begin our investigation, we first study LLTs in the general abstract setup
of ergodic probability preserving maps.%

\vspace{0.3cm}%
%

\noindent
\textbf{A general LLT for hitting times.} To formulate the general result, let
$\lambda$ denote one-dimensional Lebesgue measure, and $\mathcal{B}_{E}$ the
Borel-$\sigma$-algebra of the space $E$. Measurable functions are
\emph{version}s of each other if they coincide almost everywhere.

\begin{theorem}
[\textbf{Automatic local limit theorem for hitting times}]\label{T_LLT_Hit}Let
$T$ be an ergodic measure-preserving map on the probability space
$(X,\mathcal{A},\mu)$, and $(A_{l})_{l\geq1}$ a sequence in $\mathcal{A}$ such
that $0<\mu(A_{l})\rightarrow0$.\newline\newline\textbf{(a)} Suppose that the
normalized hitting times converge to some sub-probability distribution
function $F$ on $[0,\infty)$,
\begin{equation}
\mu(\mu(A_{l})\varphi_{A_{l}}\leq t)\Longrightarrow F(t)\text{ \quad as
}l\rightarrow\infty\text{.}\label{Eq_DLT_HTS}%
\end{equation}
Then, for sequences $(k_{l})_{l\geq1}$ of integers,
\begin{gather}
\mu(\varphi_{A_{l}}=k_{l})\sim F^{\prime}(t)\cdot\mu(A_{l})\text{ \quad as
}l\rightarrow\infty\label{Eq_LLT_HTS_Simple}\\
\text{whenever }\mu(A_{l})k_{l}\rightarrow t>0\text{ and }F^{\prime}\text{ is
continuous at }t\text{.}\nonumber
\end{gather}
\textbf{(b)} Conversely, assume that $D\in\mathcal{B}_{(0,\infty)}$ satisfies
$\lambda((0,\infty)\setminus D)=0$ and $G:D\rightarrow\lbrack0,\infty)$ is
such that, for sequences $(k_{l})_{l\geq1}$ of integers,
\begin{equation}
\mu(\varphi_{A_{l}}=k_{l})\sim G(t)\cdot\mu(A_{l})\text{\quad as }%
l\rightarrow\infty\quad\text{whenever }\mu(A_{l})k_{l}\rightarrow t\in
D\text{.}\label{Eq_AssmLLTHitConverse}%
\end{equation}
Then (\ref{Eq_DLT_HTS}) holds with $F(t):=\int_{0}^{t}G(s)\,ds$,
$t>0$.\newline\newline\textbf{(c)} Under (\ref{Eq_DLT_HTS}), if $F^{\prime}$
is continuous and strictly positive on an open interval $I\subseteq(0,\infty
)$, then, for every compact subset $J$ of $I$,
\begin{equation}
\mu(\varphi_{A_{l}}=k_{l})\sim F^{\prime}(\mu(A_{l})k_{l})\cdot\mu
(A_{l})\text{ \quad}%
\begin{array}
[c]{c}%
\text{as }l\rightarrow\infty\text{,}\\
\text{uniformly in }(k_{l})\in\mathcal{K}_{J}(A_{l})\text{,}%
\end{array}
\label{Eq_LLT_HTS_cpt1}%
\end{equation}
where $\mathcal{K}_{J}(A_{l}):=\left\{  (k_{l})_{l\geq1}:k_{l}\in
\mathbb{N},\,\mu(A_{l})k_{l}\in J\text{ for }l\geq1\right\}  $.
\end{theorem}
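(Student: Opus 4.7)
The backbone of my plan is to extract an elementary monotonicity identity for the pointwise hitting-time distribution. Splitting $T^{-1}\{\varphi_{A}=k\}$ according to whether $Tx\in A$ or not, and using $T$-invariance, one obtains
\[
\mu(\varphi_{A}=k+1)=\mu(\varphi_{A}=k)-\mu(A)\,\mu_{A}(\varphi_{A}=k),\qquad k\geq 1.
\]
Two corollaries drive the whole argument: $k\mapsto\mu(\varphi_{A_{l}}=k)$ is non-increasing, and $\mu(\varphi_{A_{l}}=k)\leq\mu(\varphi_{A_{l}}=1)=\mu(A_{l})$, so the ratios $d_{l}(k):=\mu(\varphi_{A_{l}}=k)/\mu(A_{l})$ all lie in $[0,1]$ and are non-increasing in $k$.

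For part (a), I would first record that any limit $F$ of hitting-time distributions is concave and thus continuous on $(0,\infty)$, so (\ref{Eq_DLT_HTS}) upgrades automatically to uniform convergence on compact subsets of $(0,\infty)$ (standard monotone-to-continuous argument for distribution functions). Given $(k_{l})$ with $t_{l}:=\mu(A_{l})k_{l}\to t$ and $F'$ continuous at $t$, the next step is to choose integers $N_{l}\to\infty$ with $N_{l}\mu(A_{l})\to 0$ while $N_{l}\mu(A_{l})$ still dominates the local uniform error $\|F_{l}-F\|_{\infty,[t/2,\,3t/2]}$; both quantities vanish, so such a choice exists. Monotonicity of $d_{l}$ then turns the pointwise value into averages over windows of length $N_{l}$ on either side of $k_{l}$, giving the sandwich
\[
\frac{F_{l}(t_{l}+N_{l}\mu(A_{l}))-F_{l}(t_{l})}{N_{l}\mu(A_{l})}\;\leq\;d_{l}(k_{l})\;\leq\;\frac{F_{l}(t_{l})-F_{l}(t_{l}-N_{l}\mu(A_{l}))}{N_{l}\mu(A_{l})}.
\]
Replacing $F_{l}$ by $F$ in the numerators is legitimate by the choice of $N_{l}$, and the resulting difference quotients of $F$ converge to the one-sided derivatives of $F$ at $t$, which coincide with $F'(t)$ by continuity of $F'$ at $t$. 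This yields (\ref{Eq_LLT_HTS_Simple}).

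Part (b) is a Riemann-sum/dominated-convergence argument. I would introduce the step functions $g_{l}(s):=d_{l}(\lceil s/\mu(A_{l})\rceil)$, uniformly bounded by $1$; the hypothesis applied to $k_{l}:=\lceil s/\mu(A_{l})\rceil$ shows $g_{l}(s)\to G(s)$ for every $s\in D$, hence $\lambda^{1}$-a.e.\ on $(0,\infty)$. Since $F_{l}(t)=\int_{0}^{\lfloor t/\mu(A_{l})\rfloor\mu(A_{l})}g_{l}\,d\lambda^{1}$ with upper limit tending to $t$, dominated convergence on $[0,t]$ gives $F_{l}(t)\to\int_{0}^{t}G\,d\lambda^{1}=F(t)$, establishing (\ref{Eq_DLT_HTS}). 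Part (c) is then a compactness reduction: a potential failure of uniform convergence on a compact $J\subset I$ would yield a subsequence along which $\mu(A_{l})k_{l}\to t^{\ast}\in J$ and $|d_{l}(k_{l})-F'(\mu(A_{l})k_{l})|$ stays bounded below; applying part (a) at $t^{\ast}$ together with the continuity of $F'$ there produces a contradiction.

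The delicate point is the two-sided window constraint in part (a): $N_{l}\mu(A_{l})$ must vanish so that the $F$-averages collapse to $F'(t)$, yet must exceed $\|F_{l}-F\|_{\infty}$ so that substituting $F$ for $F_{l}$ contributes only $o(1)$ after division by $N_{l}\mu(A_{l})$, rather than blowing up. The monotonicity identity above is exactly what permits this balanced choice, by converting pointwise values of $d_{l}$ into one-sided averages of $F_{l}$.
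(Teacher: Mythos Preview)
Your proof is correct but follows a genuinely different path from the paper's.

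The paper's key identity is the \emph{summed} form of yours: telescoping $\mu(\varphi_A=k+1)=\mu(\varphi_A=k)-\mu(A)\mu_A(\varphi_A=k)$ over $j\ge k$ yields $\mu(\varphi_A=k)=\mu(A)\,\mu_A(\varphi_A\ge k)$. The paper then invokes the Haydn--Lacroix--Vaienti duality (their Theorem~\ref{T_HittingTimeVsReturnTimeLimits}) to get $\mu_{A_l}(\mu(A_l)\varphi_{A_l}\le t)\Rightarrow\widetilde F(t)$ with $F'=1-\widetilde F$, so that $\mu_{A_l}(\varphi_{A_l}\ge k_l)\to F'(t)$ at continuity points of $\widetilde F$, and (\ref{Eq_LLT_HTS_Simple}) drops out in one line. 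Part~(c) is proved the same way using uniform continuity of $\widetilde F$ on $J$, and part~(b) via Helly selection plus part~(a) applied to subsequential limits.

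Your route avoids the external duality theorem entirely. For (a) you exploit the monotonicity of $k\mapsto d_l(k)$ to trap $d_l(k_l)$ between one-sided difference quotients of $F_l$ over windows of width $N_l\mu(A_l)$, then balance $N_l$ so that both $N_l\mu(A_l)\to 0$ and $\|F_l-F\|_\infty=o(N_l\mu(A_l))$; this is the standard ``monotone local limit'' sandwich and works cleanly here because the bound $d_l\le 1$ makes $F$ Lipschitz (hence continuous, giving the needed locally uniform convergence $F_l\to F$) without appealing to concavity at all. For (b) your dominated-convergence argument on the step functions $g_l$ is more direct than the paper's Helly-plus-(a) route, and for (c) the subsequence/compactness reduction to (a) is routine.

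What each approach buys: the paper's argument is shorter and makes the structural identity $F'=1-\widetilde F$ transparent, at the cost of importing the hitting/return duality as a black box. Your argument is self-contained and would transfer verbatim to any setting where one has monotonicity of the point masses and a uniform bound $\mu(\varphi_{A_l}=k)\le C\mu(A_l)$, even without an underlying return-time interpretation.
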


\begin{remark}
\textbf{a)} Note the absence of explicit conditions akin to aperiodicity (weak
mixing, say) of the map $T$. \newline\textbf{b)} This result contains, as an
easy special case, Theorem \ref{T_AutoLLT_Hit} above.\newline\textbf{c)}
Recall that by our convention regarding $a_{n}\sim c\cdot b_{n}$ the above
also covers situations with $F^{\prime}(t)=0$ or $G(t)=0$, in which cases
(\ref{Eq_LLT_HTS_Simple}) and (\ref{Eq_AssmLLTHitConverse}) say that
$\mu(\varphi_{A_{l}}=k_{l})=o(\mu(A_{l}))$.
\end{remark}

We shall also see, as a consequence, that LLTs for hitting times are
remarkably robust. To formulate this concisely, we first record

\begin{remark}
In the situation of Theorem \ref{T_LLT_Hit} (b) we can, in a second step,
apply (a) to see that $G$ has a non-increasing c\`{a}dl\`{a}g version (namely
$F^{\prime}$) for which (\ref{Eq_AssmLLTHitConverse}) holds, specifically,
with $D$ the set of its continuity points in $(0,\infty)$.
\end{remark}

Hence, assuming these properties of $G$ and $D$ does not restrict
applicability of

\begin{theorem}
[\textbf{Nearby sequences inherit LLT for hitting times}]%
\label{T_RobustLLThit}Let $T$ be an ergodic measure-preserving map on the
probability space $(X,\mathcal{A},\mu)$, and let $(A_{l}),(B_{l})$ be
sequences in $\mathcal{A}$ such that $0<\mu(A_{l})\rightarrow0$ and $\mu
(A_{l}\triangle B_{l})=o(\mu(A_{l}))$. Suppose that $G$ is a non-increasing
c\`{a}dl\`{a}g function on $[0,\infty)$ with continuity set $D$ such that, for
sequences $(k_{l})_{l\geq1}$ of integers,%
\begin{equation}
\mu(\varphi_{A_{l}}=k_{l})\sim G(t)\cdot\mu(A_{l})\text{\quad as }%
l\rightarrow\infty\quad\text{whenever }\mu(A_{l})k_{l}\rightarrow t\in
D\text{.}\label{T_bchscbhdsbchsbch}%
\end{equation}
Then (\ref{T_bchscbhdsbchsbch}) remains true with $A_{l}$ replaced by $B_{l}$.
\end{theorem}%

\vspace{0.3cm}%
%

\noindent
\textbf{LLT for hitting times under different measures.} It is an interesting
and useful general fact that distributional convergence $\mu(\mu(A_{l}%
)\varphi_{A_{l}}\leq t)\Longrightarrow F(t)$ of normalized hitting times in an
ergodic m.p. system always is an instance of \emph{strong distributional
convergence}, meaning that it remains valid whenever the invariant measure is
replaced by some absolutely continuous probability. The following is contained
in Corollary 5 of \cite{Z7}. \

\begin{proposition}
[\textbf{Strong distributional convergence of hitting-times}]%
\label{P_StrongDistrCgeHittingTimes}\negthinspace Let $(X,\mathcal{A},\mu,T) $
be ergodic and probability-preserving, $(A_{l})_{l\geq1}$ a sequence of
asymptotically rare events, and $F$ a sub-probability distribution function on
$[0,\infty)$. If
\begin{equation}
\nu(\mu(A_{l})\varphi_{A_{l}}\leq t)\Longrightarrow F(t)\text{ \quad as
}l\rightarrow\infty\label{Eq_vdhbvjdbvjhbvjbyvjhbdvjbbbbbbbbbb}%
\end{equation}
for \emph{some} probability measure $\nu\ll\mu$, then
(\ref{Eq_vdhbvjdbvjhbvjbyvjhbdvjbbbbbbbbbb}) holds for \emph{all}
probabilities $\nu\ll\mu$.
\end{proposition}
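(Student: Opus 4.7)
My plan is to show that the normalised hitting times $Y_l := \mu(A_l)\varphi_{A_l}$ are asymptotically $T$-invariant, and then to derive a single universal ``sandwich'' estimate bounding $\nu(Y_l \leq t)$ between $\mu$-probabilities of slightly shifted events, uniformly in $\nu \ll \mu$. Such a sandwich symmetrises the roles of $\nu$ and $\mu$, so that convergence can be transferred in either direction by a standard double-limit argument along continuity points of $F$.

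First, I would verify asymptotic invariance directly from the definition. Since $\varphi_{A_l}(Tx) = \varphi_{A_l}(x) - 1$ whenever $\varphi_{A_l}(x) \geq 2$, iteration gives $\varphi_{A_l}(T^n x) = \varphi_{A_l}(x) - n$ on $\{\varphi_{A_l} > n\}$, and therefore pointwise
\begin{equation*}
|Y_l \circ T^n - Y_l| \leq n\,\mu(A_l) \quad \text{on} \quad E_l^{(N)} := X \setminus \bigcup_{j=0}^{N-1}T^{-j}A_l, \quad 0 \leq n \leq N-1,
\end{equation*}
with $\mu(X \setminus E_l^{(N)}) \leq N\mu(A_l) \to 0$ as $l \to \infty$ for each fixed $N$. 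For any $\nu \ll \mu$, absolute continuity gives $\nu(X \setminus E_l^{(N)}) \to 0$ as well, and once $N\mu(A_l) < \varepsilon$ the inclusions $\{Y_l \circ T^n \leq t - \varepsilon\} \cap E_l^{(N)} \subseteq \{Y_l \leq t\} \subseteq \{Y_l \circ T^n \leq t + \varepsilon\} \cup (X \setminus E_l^{(N)})$ yield, for every $0 \leq n \leq N-1$,
\begin{equation*}
\nu(Y_l \circ T^n \leq t - \varepsilon) - \nu(X \setminus E_l^{(N)}) \leq \nu(Y_l \leq t) \leq \nu(Y_l \circ T^n \leq t + \varepsilon) + \nu(X \setminus E_l^{(N)}).
\end{equation*}

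Next, I would transfer the shifted distributions back to $\mu$ via the $L^1$ mean ergodic theorem. With $h := d\nu/d\mu \in L^1(\mu)$ and $\widehat{U}$ the transfer operator of $(T,\mu)$, ergodicity gives $\tfrac{1}{N}\sum_{n=0}^{N-1}\widehat{U}^n h \to 1$ in $L^1(\mu)$, so $\varepsilon_N := \bigl\|\tfrac{1}{N}\sum_{n=0}^{N-1}\widehat{U}^n h - 1\bigr\|_{L^1(\mu)} \to 0$. Duality of $\widehat{U}$ with the Koopman operator yields the uniform bound $\bigl|\tfrac{1}{N}\sum_{n=0}^{N-1}(\nu \circ T^{-n})(B) - \mu(B)\bigr| \leq \varepsilon_N$ for every $B \in \mathcal{A}$. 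Averaging the previous inequality over $n = 0, \ldots, N-1$ therefore produces the key sandwich
\begin{equation*}
\mu(Y_l \leq t-\varepsilon) - \varepsilon_N - \nu(X \setminus E_l^{(N)}) \leq \nu(Y_l \leq t) \leq \mu(Y_l \leq t+\varepsilon) + \varepsilon_N + \nu(X \setminus E_l^{(N)}),
\end{equation*}
valid for every $\nu \ll \mu$, every $N \geq 1$, and every $\varepsilon > 0$ with $N\mu(A_l) < \varepsilon$.

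Finally I would close the argument by a triple limit. Fix a continuity point $s$ of $F$ and pick $\varepsilon > 0$ so small that $s \pm \varepsilon$ are also continuity points (possible, as $F$ has only countably many jumps). Applying the sandwich with $\nu = \nu_0$ and the given hypothesis $\nu_0(Y_l \leq t) \to F(t)$, letting $l \to \infty$ kills $\nu_0(X \setminus E_l^{(N)})$; then $N \to \infty$ kills $\varepsilon_N$; then $\varepsilon \to 0^+$ squeezes $F(s-\varepsilon)$ and $F(s+\varepsilon)$ together, giving $\mu(Y_l \leq s) \to F(s)$. Reading the same sandwich with any other $\nu \ll \mu$ in place of $\nu_0$ then transports this back to $\nu$. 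The main delicacy is the ordering of limits ($l$ first, then $N$, then $\varepsilon$) together with the restriction to continuity points of $F$; the deterministic asymptotic-invariance bound and the ergodic-theoretic input are both standard, but one has to ensure that $\nu(X \setminus E_l^{(N)})$ is disposed of before $N$ and $\varepsilon$ are sent to their limits.
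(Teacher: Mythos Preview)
Your argument is correct. Note, however, that the paper does not give its own proof of this proposition; it simply records it as a known fact contained in Corollary~5 of \cite{Z7}. Your proof is essentially the standard reasoning underlying that reference: the two ingredients are (i) asymptotic $T$-invariance of $\mu(A_l)\varphi_{A_l}$, in the sense that $|\mu(A_l)\varphi_{A_l}\circ T^n-\mu(A_l)\varphi_{A_l}|\le n\,\mu(A_l)$ off a set of vanishing $\mu$-measure, and (ii) the $L^1$ mean ergodic theorem for the transfer operator, which makes Ces\`aro averages of $\nu\circ T^{-n}$ approximate $\mu$ uniformly over events. The resulting sandwich and the $l\to\infty$, $N\to\infty$, $\varepsilon\to 0$ passage along continuity points of $F$ is precisely the mechanism of strong distributional convergence developed in \cite{Z7}. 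One small remark: your error term $\varepsilon_N$ depends on the particular $\nu$ through $h=d\nu/d\mu$, so when you ``read the same sandwich with any other $\nu\ll\mu$'' you are tacitly using a $\nu$-dependent $\varepsilon_N(\nu)\to 0$; this is harmless, since the conclusion is asserted for each fixed $\nu$.
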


While Theorem \ref{T_LLT_Hit} shows that distributional convergence under
$\mu$ automatically upgrades to local convergence under $\mu$ at continuity
points of $F^{\prime}$, it is easy to see that the statement analogous to
Theorem \ref{T_LLT_Hit} fails for general $\nu\ll\mu$:

For example, if the system has a cyclic structure in that $C,T^{-1}%
C,\ldots,T^{-p+1}C$ are pairwise disjoint with $T^{-p}C=C$ for some
$C\in\mathcal{A} $ with $\mu(C)>0$ and $p\geq2$, and $A_{l}\subseteq C$ for
all $l$, then the values of the $\varphi_{A_{l}}$ are always multiples of $p$
on $C$, and taking $\nu:=\mu_{C}$ we have $\nu(\varphi_{A_{l}}=k_{l})=0$
unless $p$ divides $k_{l}$.

But periodicity is not the only possible obstacle. In fact, in every system,
and for every sequence $(A_{l})$ there is some $\nu$ which fails the LLT in a
similar manner.

\begin{proposition}
[\textbf{Initial distributions foiling LLT for hitting times}]%
\label{P_BadInitialDistrForLLTHit}Let $T$ be ergodic and measure preserving on
the probability space $(X,\mathcal{A},\mu)$, and $A_{l}\in\mathcal{A}$ such
that $0<\mu(A_{l})\rightarrow0$ and
\begin{equation}
\mu(\mu(A_{l})\varphi_{A_{l}}\leq t)\Longrightarrow F(t)\text{ \quad as
}l\rightarrow\infty\label{Eq_yxcvvcxy}%
\end{equation}
for some sub-probability distribution function $F$. Assume that $F^{\prime}$
is continuous at $s>0$ with $F^{\prime}(s)>0$, and take integers $k_{l}$ with
$\mu(A_{l})k_{l}\rightarrow s$. Then there is some $B\in\mathcal{A}$ (where
$\mu(B^{c})>0$ can be chosen arbitrarily small) such that $\nu:=\mu_{B}\ll\mu$
satisfies
\begin{equation}
\nu(\mu(A_{l})\varphi_{A_{l}}\leq t)\Longrightarrow F(t)\text{ \quad as
}l\rightarrow\infty\text{,}\label{Eq_dfgvdfvjhacvvvvvvvvvvvvvvvvv}%
\end{equation}
while \
\begin{equation}
\nu(\varphi_{A_{l}}=k_{l})=0\text{ \quad for infinitely many }l\geq
1\text{.}\label{Eq_yxcvvcxyLLTfails}%
\end{equation}

\end{proposition}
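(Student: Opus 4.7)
The plan is to build $B$ by removing from $X$ a sparse subfamily of the ``exceptional'' sets $E_l := \{\varphi_{A_l} = k_l\}$. Two observations make this almost automatic: first, Theorem \ref{T_LLT_Hit}(a) forces $\mu(E_l) \to 0$, so excising $E_l$ along a sufficiently sparse sequence of indices $l$ costs arbitrarily little $\mu$-mass; second, Proposition \ref{P_StrongDistrCgeHittingTimes} ensures that the distributional convergence \eqref{Eq_yxcvvcxy} transfers automatically to every absolutely continuous probability, so it is preserved when we pass from $\mu$ to $\nu := \mu_B$.

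Concretely, fix $\varepsilon \in (0,1)$. Since $\mu(A_l)k_l \to s$ with $F'$ continuous at $s$, Theorem \ref{T_LLT_Hit}(a) applied to the hypothesis \eqref{Eq_yxcvvcxy} gives $\mu(E_l) \sim F'(s)\,\mu(A_l) \to 0$. Pick strictly increasing integers $l_1 < l_2 < \cdots$ with $\mu(E_{l_j}) < \varepsilon\, 2^{-j}$ for every $j \geq 1$, and set
\[
B \;:=\; X \setminus \bigcup_{j \geq 1} E_{l_j}.
\]
Then $\mu(B^c) \leq \sum_{j \geq 1} \mu(E_{l_j}) < \varepsilon$, so $\mu(B) > 0$ and $\nu := \mu_B$ is a well-defined probability measure with $\nu \ll \mu$, while $\mu(B^c)$ can be made as small as desired by choosing $\varepsilon$ small.

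To verify \eqref{Eq_dfgvdfvjhacvvvvvvvvvvvvvvvvv}, simply invoke Proposition \ref{P_StrongDistrCgeHittingTimes}, which upgrades \eqref{Eq_yxcvvcxy} to convergence under every absolutely continuous probability, in particular under our $\nu$. For \eqref{Eq_yxcvvcxyLLTfails}, observe that by construction $B \cap E_{l_j} = \emptyset$ for every $j \geq 1$, hence
\[
\nu(\varphi_{A_{l_j}} = k_{l_j}) \;=\; \frac{\mu(B \cap E_{l_j})}{\mu(B)} \;=\; 0
\]
for each $j$, which is an infinite subsequence of indices. The whole argument is a routine Borel--Cantelli-style thinning; there is no real obstacle, and the only non-formal input is the smallness $\mu(E_l) \to 0$, which is precisely the content of Theorem \ref{T_LLT_Hit}(a).
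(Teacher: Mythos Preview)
Your proof is correct and follows essentially the same route as the paper: both pick a sparse subsequence $l_j\nearrow\infty$ along which the sets $\{\varphi_{A_{l_j}}=k_{l_j}\}$ have summable measure (using Theorem~\ref{T_LLT_Hit}(a) to get $\mu(\varphi_{A_l}=k_l)\to 0$), take $B$ to be the complement of their union, and then invoke Proposition~\ref{P_StrongDistrCgeHittingTimes} for the distributional convergence under $\nu=\mu_B$.
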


Nonetheless, we can provide easy sufficient conditions under which the LLT for
hitting times also holds under a specific probability measure $\nu$ different
from the invariant measure $\mu$. It is clear that we must at least rule out
periodic behaviour.

If $\nu\ll\mu$, suitable conditions can be expressed in terms of the density
of $\nu$. We let $\mathcal{D}(\mu)$ denote the set of probability densities
$u$ w.r.t. $\mu$, and set $u\odot\mu(A):=\int_{A}u\,d\mu$. The \emph{transfer
operator} $\widehat{T}:L_{1}(\mu)\rightarrow L_{1}(\mu)$ of $T$ on
$(X,\mathcal{A},\mu)$ describes the evolution of probability densities under
$T$. That is, if $\nu$ has density $u$ w.r.t. $\mu$, so that $u=d\nu/d\mu$,
then $\widehat{T}u:=d(\nu\circ T^{-1})/d\mu$. Equivalently, $\int(g\circ
T)\cdot u\,d\mu=\int g\cdot\widehat{T}u\,d\mu$ for all $u\in L_{1}(\mu)$ and
$g\in L_{\infty}(\mu)$.

When combined with Theorem \ref{T_LLT_Hit}, the following result extends the
latter to a larger family of measures $\nu$. Assuming $\mu(\varphi_{A_{l}%
}=k_{l})\sim F^{\prime}(\mu(A_{l})k_{l})\cdot\mu(A_{l})$ we obviously also get
$\nu(\varphi_{A_{l}}=k_{l})\sim F^{\prime}(\mu(A_{l})k_{l})\cdot\mu(A_{l})$ as
soon as $\nu$ satisfies%
\begin{equation}
\left\vert \nu(\varphi_{A_{l}}=k_{l})-\mu(\varphi_{A_{l}}=k_{l})\right\vert
=o(\mu(A_{l}))\text{ \quad as }l\rightarrow\infty\text{.}%
\label{Eq_LLTgoodBaby}%
\end{equation}
The next proposition actually asserts uniformity of convergence in the LLT on
certain classes of measures, which will be an important step in the approach
to LLTs for \emph{return} times discussed below. A family $\mathcal{V}$ of
probability measures $\nu$ on $(X,\mathcal{A})$ will be called
\emph{LLT-uniform for} $(A_{l})$ if for every $(k_{l})\in\mathcal{K}(A_{l})$
(recall (\ref{Eq_DefCurlyK})),
\begin{equation}
\left\vert \nu(\varphi_{A_{l}}=k_{l})-\mu(\varphi_{A_{l}}=k_{l})\right\vert
=o(\mu(A_{l}))\text{ \quad as }l\rightarrow\infty\text{, uniformly in }\nu
\in\mathcal{V}\label{Eq_LLTgood}%
\end{equation}
(meaning $\left\vert \nu(\varphi_{A_{l}}=k_{l})-\mu(\varphi_{A_{l}}%
=k_{l})\right\vert /\mu(A_{l})\rightarrow0$ uniformly in $\nu\in\mathcal{V}$).
Note that we can assume w.l.o.g. that $\mu\in\mathcal{V}$. We say that a
collection $\mathcal{U}\subseteq\mathcal{D}(\mu)$ of probability densities is
\emph{LLT-uniform for} $(A_{l})$ if the family $\{u\odot\mu:u\in\mathcal{U}\}$
of measures is. We can then state

\begin{proposition}
[\textbf{Change of measure in LLT for hitting times}]%
\label{T_LLT_Hit_DensiFromU}Let $T$ be an ergodic measure-preserving map on
the probability space $(X,\mathcal{A},\mu)$, and $(A_{l})_{l\geq1}$ a sequence
in $\mathcal{A}$ such that $0<\mu(A_{l})\rightarrow0$. Suppose that the
normalized hitting times converge to some sub-probability distribution
function $F$ on $[0,\infty)$ with $F^{\prime}$ continuous and strictly
positive on $(0,\infty)$,
\begin{equation}
\mu(\mu(A_{l})\varphi_{A_{l}}\leq t)\Longrightarrow F(t)\text{ \quad as
}l\rightarrow\infty\text{.}\label{Eq_gvdcgsvggggggy}%
\end{equation}
\textbf{(a)} Let $\mathcal{V}$ be a family of probability measures on
$(X,\mathcal{A})$. If $\mathcal{V}$ is LLT-uniform for $(A_{l})$, then for
every $\delta\in(0,1]$,%
\begin{equation}
\nu(\varphi_{A_{l}}=k_{l})\sim F^{\prime}(\mu(A_{l})k_{l})\cdot\mu
(A_{l})\text{ \quad}%
\begin{array}
[c]{c}%
\text{as }l\rightarrow\infty\text{, uniformly in}\\
\nu\in\mathcal{V}\text{ and }(k_{l})\in\mathcal{K}_{\delta}(A_{l})\text{.}%
\end{array}
\label{Eq_LLT_HTS_cpt_UVV}%
\end{equation}
\textbf{(b)} Let $\mathcal{U}\subseteq\mathcal{D}(\mu)$ be such that
$\widehat{T}\mathcal{U}$ is bounded in $L_{\infty}(\mu)$ and
\begin{equation}
\parallel\widehat{T}^{n}u-1_{X}\parallel_{\infty}\longrightarrow0\text{ \quad
as }n\rightarrow\infty\text{, uniformly in }u\in\mathcal{U}\text{.}%
\label{Eq_UnifUnifCge}%
\end{equation}
Then $\mathcal{U}$ is LLT-uniform for $(A_{l})$.
\end{proposition}

\begin{remark}
\textbf{a)} Observe that the definition of an \emph{LLT-uniform} family
$\mathcal{V}$ does not a priori require the convergence in (\ref{Eq_LLTgood})
to be uniform on $\mathcal{K}_{\delta}(A_{l})$. But statement (a) of the
proposition shows that, under (\ref{Eq_gvdcgsvggggggy}), it is automatic that
(\ref{Eq_LLTgood}) also holds uniformly in $(k_{l})\in\mathcal{K}_{\delta
}(A_{l})$. \newline\textbf{b)} An analogous statement holds for general $F$
and sequences $(k_{l})$ of integers for which either $\mu(A_{l})k_{l}%
\rightarrow t$ (some continuity point $t>0$ of $F^{\prime}$ with $F^{\prime
}(t)>0$), or such that $(\mu(A_{l})k_{l})_{l\geq1}$ is contained in some
compact subset of $I $, where $F$ is strictly positive and $\mathcal{C}^{1}$
on the open interval $I\subseteq(0,\infty)$. (The argument remains the
same.)\newline\textbf{c)} The possibility of changing the initial distribution
granted by this proposition immediately translates into a simple conditional
LLT (for a more sophisticated version see Theorem \ref{T_LLT_ConsecutiveTimes}
below): Under the assumptions of Proposition \ref{T_LLT_Hit_DensiFromU} a)
suppose that $(B_{l})\subseteq\mathcal{A}$ is a sequence of conditioning
events (with $\mu(B_{l})>0$) such that $\mu_{B_{l}}\in\mathcal{V}$ for all
$l$. Then, for every $(k_{l})\in\mathcal{K}(A_{l})$, we have $\mu_{B_{l}%
}(\varphi_{A_{l}}=k_{l})\sim F^{\prime}(\mu(A_{l})k_{l})\cdot\mu(A_{l})$ as
$l\rightarrow\infty$.
\end{remark}%

\vspace{0.3cm}%
%

\noindent
\textbf{A LLT for asymptotically exponential return times.} Having discussed
the passage from distributional convergence of \emph{hitting times}, say
$\mu(\mu(A_{l})\varphi_{A_{l}}\leq t)\Longrightarrow1-e^{-t}$, to its local
version $\mu(\varphi_{A_{l}}=k_{l})\sim e^{-\mu(A_{l})k_{l}}\,\mu(A_{l})$, we
can just as well ask whether a corresponding distributional limit theorem for
\emph{return times}, such as $\mu_{A_{l}}(\mu(A_{l})\varphi_{A_{l}}\leq
t)\Longrightarrow1-e^{-t}$, implies a LLT like $\mu_{A_{l}}(\varphi_{A_{l}%
}=k_{l})\sim e^{-\mu(A_{l})k_{l}}\,\mu(A_{l})$.

This is not even true in the case of an exponential limit law, as is easily
seen in cyclic situations like those mentioned before Proposition
\ref{P_BadInitialDistrForLLTHit}. Worse yet, it takes very little to sabotage
an LLT in an arbitrary system.The following observation is in stark contrast
to Theorem \ref{T_RobustLLThit}.

\begin{proposition}
[\textbf{Nearby sequences which fail LLT for return times}]\label{P_ookmmmmms}%
Let $T$ be an ergodic measure-preserving map on the probability space
$(X,\mathcal{A},\mu)$, and $(A_{l})_{l\geq1}$ a sequence in $\mathcal{A}$ such
that $0<\mu(A_{l})\rightarrow0$. Suppose that
\begin{equation}
\mu_{A_{l}}(\mu(A_{l})\varphi_{A_{l}}\leq t)\Longrightarrow\widetilde
{F}(t)\text{ \quad as }l\rightarrow\infty\text{,}\label{Eq_kjhgfdsalkjhgfds}%
\end{equation}
for some distribution function $\widetilde{F}$. Take integers $k_{l}$ for
which $\mu(A_{l})k_{l}\ $converges to a continuity point of $\widetilde{F}$.
Then there exist $B_{l}\subseteq A_{l}$ such that $\mu(B_{l})\sim\mu(A_{l})$
and
\begin{equation}
\mu_{B_{l}}(\mu(B_{l})\varphi_{B_{l}}\leq t)\Longrightarrow\widetilde
{F}(t)\text{ \quad as }l\rightarrow\infty\text{,}\label{Eq_zhbgzhbgzhbgzhbg}%
\end{equation}
while
\begin{equation}
\mu_{B_{l}}(\varphi_{B_{l}}=k_{l})=0\text{ \quad for }l\geq1\text{.}%
\end{equation}

\end{proposition}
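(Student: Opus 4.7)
The plan is to take $B_l := A_l \setminus E_l$, where $E_l := \{x \in A_l : \varphi_{A_l}(x) = k_l\}$. Since $\mu(A_l) k_l$ tends to a continuity point of $\widetilde{F}$, the pre-limit atom $\mu_{A_l}(\varphi_{A_l} = k_l)$ vanishes, so $\mu(E_l) = o(\mu(A_l))$ and $\mu(B_l) \sim \mu(A_l)$, giving the first conclusion.

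For the second conclusion, suppose by contradiction some $x \in B_l$ has $\varphi_{B_l}(x) = k_l$. Then $T^{k_l} x \in B_l \subseteq A_l$, and $T^j x \notin B_l$ for $1 \leq j < k_l$, so each such $T^j x$ is either outside $A_l$ or lies in $E_l$. If none lies in $A_l$, then $\varphi_{A_l}(x) = k_l$, so $x \in E_l$, contradicting $x \in B_l$. Otherwise, letting $j^*$ be the smallest $j \in [1, k_l - 1]$ with $T^{j^*} x \in A_l$, one has $T^{j^*} x \in E_l$ and hence $\varphi_{A_l}(T^{j^*} x) = k_l$, which forces $T^{j^* + m} x \notin A_l$ for $1 \leq m < k_l$. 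Taking $m := k_l - j^* \in [1, k_l - 1]$ yields $T^{k_l} x \notin A_l$, contradicting $T^{k_l} x \in B_l$.

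For the distributional convergence, fix a continuity point $t > 0$ of $\widetilde{F}$ and set $k := \lfloor t / \mu(B_l) \rfloor$. Writing $N_A := \mu(A_l \cap \{\varphi_{A_l} \leq k\})$ and $N_B := \mu(B_l \cap \{\varphi_{B_l} \leq k\})$, the inclusion $B_l \subseteq A_l$ gives $\varphi_{B_l} \geq \varphi_{A_l}$, hence
\[
N_A - N_B = \mu(E_l \cap \{\varphi_{A_l} \leq k\}) + \mu(B_l \cap \{\varphi_{A_l} \leq k < \varphi_{B_l}\}).
\]
The first term is at most $\mu(E_l)$. For the second, any such $x$ has its first $A_l$-visit land in $E_l$ (otherwise $\varphi_{B_l}(x) = \varphi_{A_l}(x) \leq k$), so the term is bounded by $\mu(\{x \in A_l : T_{A_l}(x) \in E_l\})$; since the first-return map $T_{A_l} : A_l \to A_l$ preserves $\mu_{A_l}$, this equals $\mu(E_l)$ as well. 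Thus $|N_A - N_B| \leq 2\mu(E_l) = o(\mu(A_l))$, and combined with $N_A/\mu(A_l) \to \widetilde{F}(t)$ and $\mu(B_l) \sim \mu(A_l)$, one concludes $N_B/\mu(B_l) \to \widetilde{F}(t)$.

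The main subtlety lies in the estimate for the second summand. A naive bound via the first-entrance map $T_{A_l}: X \to A_l$ (from all of $X$) only yields $\mu_{A_l}(E_l) = o(1)$, which is too weak since $o(1)/\mu(B_l) \not\to 0$. The decisive observation is that $B_l \subseteq A_l$, so one can restrict to $A_l$ and use the first-return map, which preserves the conditional measure $\mu_{A_l}$ (not merely $\mu$) and thereby supplies the extra factor of $\mu(A_l)$ needed to kill the error.
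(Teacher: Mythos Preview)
Your proof is correct and uses the same construction $B_l = A_l \cap \{\varphi_{A_l} \neq k_l\}$ as the paper, with the same case analysis showing $\varphi_{B_l} \neq k_l$ on $B_l$. The only difference is that where the paper invokes an external reference (Remark~2.3 of \cite{Z11}) to transfer the return-time limit law from $(A_l)$ to $(B_l)$ via $\mu(B_l)\sim\mu(A_l)$, you give a direct self-contained argument using the first-return map; this is a nice touch and the two proofs otherwise coincide.
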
%

\vspace{0.3cm}%

We can, however, identify conditions which do imply an LLT for return times.
The assumptions of the following theorem ensure that the local behaviour of
return times is essentially equivalent to that of hitting times. (For this
reason, our approach to the LLT for return times is limited to situations with
exponential limit laws.) To cover exponential limits with general expectation
$1/\theta$, we formulate the assumptions in the typical $A_{l}=A_{l}^{\bullet
}\cup A_{l}^{\circ}$ situation of Remark \ref{Rem_CommentTheta} in which
$\theta\neq1$ may arise naturally.

\begin{theorem}
[\textbf{LLT for exponential return times}]\label{T_LLT_Ret0}Let $T$ be an
ergodic measure-preserving map on the probability space $(X,\mathcal{A},\mu)$,
$\theta\in(0,1]$, and $(A_{l})_{l\geq1}$ a sequence in $\mathcal{A}$ such that
$0<\mu(A_{l})\rightarrow0$ and $A_{l}=A_{l}^{\bullet}\cup A_{l}^{\circ}$
(measurable and disjoint) with
\begin{equation}
\mu_{A_{l}}(A_{l}^{\circ})\rightarrow\theta\text{ \quad and \quad}\mu
(A_{l})\parallel1_{A_{l}^{\bullet}}\varphi_{A_{l}}\parallel_{\infty
}\rightarrow0\text{ \quad as }l\rightarrow\infty\text{.}%
\label{Eq_ReturningPart}%
\end{equation}
Suppose that the normalized hitting times converge to an exponential law,
\begin{equation}
\mu(\mu(A_{l})\varphi_{A_{l}}\leq t)\Longrightarrow1-e^{-\theta t}\text{ \quad
as }l\rightarrow\infty\text{.}\label{Eq_ExpHitAssm}%
\end{equation}
\textbf{(a)} If for every sequence $(k_{l})\in\mathcal{K}(A_{l})$,%
\begin{equation}
\left\vert \mu_{A_{l}^{\circ}}(\varphi_{A_{l}}=k_{l})-\mu(\varphi_{A_{l}%
}=k_{l})\right\vert =o(\mu(A_{l}))\text{ \quad as }l\rightarrow\infty
\text{.}\label{Eq_RetEquivHit}%
\end{equation}
Then $(A_{l})$ satisfies a LLT for return times in that, for every $\delta>0
$,
\begin{equation}
\mu_{A_{l}}(\varphi_{A_{l}}=k_{l})\sim\theta^{2}e^{-\theta\mu(A_{l})k_{l}%
}\,\mu(A_{l})\text{ \quad}%
\begin{array}
[c]{c}%
\text{as }l\rightarrow\infty\text{, uniformly}\\
\text{in }(k_{l})\in\mathcal{K}_{\delta}(A_{l})\text{.}%
\end{array}
\label{Eq_LLT_Ret_Abstr0}%
\end{equation}
\textbf{(b)} Condition (\ref{Eq_RetEquivHit}) is satisfied whenever there are
integers $\tau_{l}\geq0$ for which%
\begin{equation}
\mu(A_{l})\,\tau_{l}\longrightarrow0\text{ \quad as }l\rightarrow
\infty\text{,}\label{Eq_OrderOfTheTauL}%
\end{equation}
such that for all sequences $(k_{l})\in\mathcal{K}(A_{l})$,%
\begin{equation}
\left\vert \mu_{A_{l}^{\circ}}(T^{-\tau_{l}}\{\varphi_{A_{l}}=k_{l}%
\})-\mu(\varphi_{A_{l}}=k_{l})\right\vert =o(\mu(A_{l}))\text{ \quad as
}l\rightarrow\infty\text{,}\label{Eq_SailingAfterTau}%
\end{equation}
and%
\begin{equation}
\mu_{A_{l}^{\circ}}(\{\varphi_{A_{l}}\leq\tau_{l}\}\cap T^{-k_{l}}A_{l}%
)=o(\mu(A_{l}))\text{ \quad as }l\rightarrow\infty\text{.}%
\label{Eq_NothingHappensTooQuickly}%
\end{equation}

\end{theorem}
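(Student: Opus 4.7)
The plan is to prove (a) by a direct decomposition, then derive (b) by a carefully chosen three-term triangle inequality that matches the hypotheses.

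For part (a), fix $\delta>0$ and $(k_l)\in\mathcal{K}_\delta(A_l)$. Since $\mu(A_l)\|1_{A_l^\bullet}\varphi_{A_l}\|_\infty\to0$ while $k_l\geq\delta/\mu(A_l)$, for $l$ large $\{\varphi_{A_l}=k_l\}$ is disjoint from $A_l^\bullet$, so $A_l\cap\{\varphi_{A_l}=k_l\}=A_l^\circ\cap\{\varphi_{A_l}=k_l\}$ and
\begin{equation*}
\mu_{A_l}(\varphi_{A_l}=k_l)=\mu_{A_l}(A_l^\circ)\cdot\mu_{A_l^\circ}(\varphi_{A_l}=k_l).
\end{equation*}
The first factor tends to $\theta$ by hypothesis. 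The second, by the assumption \eqref{Eq_RetEquivHit}, equals $\mu(\varphi_{A_l}=k_l)+o(\mu(A_l))$. Theorem \ref{T_AutoLLT_Hit} turns \eqref{Eq_ExpHitAssm} into the uniform hitting-time LLT $\mu(\varphi_{A_l}=k_l)\sim\theta e^{-\theta\mu(A_l)k_l}\mu(A_l)$ on $\mathcal{K}_\delta(A_l)$, and multiplying out yields \eqref{Eq_LLT_Ret_Abstr0}.

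For part (b), fix $(k_l)\in\mathcal{K}(A_l)$ and, for $l$ large enough that $k_l>\tau_l$ (which holds since $\mu(A_l)\tau_l\to 0$ while $\mu(A_l)k_l$ is bounded below), write
\begin{equation*}
\mu_{A_l^\circ}(\varphi_{A_l}=k_l)-\mu(\varphi_{A_l}=k_l)=\Delta_l^{(1)}+\Delta_l^{(2)}+\Delta_l^{(3)},
\end{equation*}
with $\Delta_l^{(1)}:=\mu_{A_l^\circ}(\varphi_{A_l}=k_l)-\mu_{A_l^\circ}(T^{-\tau_l}\{\varphi_{A_l}=k_l-\tau_l\})$, $\Delta_l^{(2)}:=\mu_{A_l^\circ}(T^{-\tau_l}\{\varphi_{A_l}=k_l-\tau_l\})-\mu(\varphi_{A_l}=k_l-\tau_l)$, and $\Delta_l^{(3)}:=\mu(\varphi_{A_l}=k_l-\tau_l)-\mu(\varphi_{A_l}=k_l)$. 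Since $(k_l-\tau_l)\in\mathcal{K}(A_l)$ for large $l$, hypothesis \eqref{Eq_SailingAfterTau} gives $\Delta_l^{(2)}=o(\mu(A_l))$. For $\Delta_l^{(3)}$, Theorem \ref{T_AutoLLT_Hit} yields the exponential asymptotics uniformly on the relevant $\mathcal{K}_\delta$-classes, and uniform continuity of $t\mapsto\theta e^{-\theta t}$ combined with $|\mu(A_l)k_l-\mu(A_l)(k_l-\tau_l)|=\mu(A_l)\tau_l\to 0$ gives $\Delta_l^{(3)}=o(\mu(A_l))$.

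The decisive term is $\Delta_l^{(1)}$. The elementary identity for hitting times reads: on $\{\varphi_{A_l}>\tau_l\}$, one has $\varphi_{A_l}(x)=\tau_l+j$ if and only if $\varphi_{A_l}(T^{\tau_l}x)=j$. Because $k_l>\tau_l$ forces $\{\varphi_{A_l}=k_l\}\subseteq\{\varphi_{A_l}>\tau_l\}$, this yields
\begin{equation*}
\{\varphi_{A_l}=k_l\}=T^{-\tau_l}\{\varphi_{A_l}=k_l-\tau_l\}\cap\{\varphi_{A_l}>\tau_l\},
\end{equation*}
so that $|\Delta_l^{(1)}|\leq\mu_{A_l^\circ}\bigl(T^{-\tau_l}\{\varphi_{A_l}=k_l-\tau_l\}\cap\{\varphi_{A_l}\leq\tau_l\}\bigr)$. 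The key observation is the set inclusion $T^{-\tau_l}\{\varphi_{A_l}=k_l-\tau_l\}\subseteq T^{-k_l}A_l$, which holds because $x$ in the left-hand side satisfies $T^{\tau_l+(k_l-\tau_l)}x=T^{k_l}x\in A_l$. Thus $|\Delta_l^{(1)}|\leq\mu_{A_l^\circ}(\{\varphi_{A_l}\leq\tau_l\}\cap T^{-k_l}A_l)=o(\mu(A_l))$ by \eqref{Eq_NothingHappensTooQuickly}.

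I expect the only real obstacle to be picking the correct splitting in (b): shifting from $k_l$ to $k_l-\tau_l$ (rather than to $k_l+\tau_l$) so that the ``error'' subset is $T^{-k_l}A_l$ exactly as it appears in \eqref{Eq_NothingHappensTooQuickly}, and controlling the resulting index shift in the target asymptotics by the uniformity in Theorem \ref{T_AutoLLT_Hit}. Everything else is bookkeeping.
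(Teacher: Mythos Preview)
Your proof is correct and follows essentially the same route as the paper, which obtains Theorem~\ref{T_LLT_Ret0} as the special case $\mathcal{V}_l=\{\mu_{A_l}\}$ (with constant $\tau_l$) of the more general Theorem~\ref{T_LLT_RetGeneral}; your three-term splitting in (b) is exactly what that general argument reduces to here, with your $\Delta_l^{(1)}$ matching the paper's use of the identity in Lemma~\ref{L_Lemma} together with \eqref{Eq_NothingHappensTooQuickly}, and your $\Delta_l^{(2)}+\Delta_l^{(3)}$ matching its appeal to \eqref{Eq_SailingAfterTau} and the uniform hitting-time LLT. The only step you leave implicit is the passage in (a) from ``for each fixed $(k_l)\in\mathcal{K}_\delta(A_l)$'' to ``uniformly in $(k_l)\in\mathcal{K}_\delta(A_l)$'', which is automatic by the standard diagonal argument (choose the worst $k_l$ at each $l$) that the paper writes out explicitly.
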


\begin{remark}
\label{Rem_Afterthought1}\textbf{a)} The most important case of the result is
that of a normalized exponential law, with $\theta=1$ and $A_{l}^{\bullet
}=\varnothing$ (so that there is nothing to check in (\ref{Eq_ReturningPart}),
and we simply have $\mu_{A_{l}^{\circ}}=\mu_{A_{l}}$).\newline\textbf{b)} The
second condition in (\ref{Eq_ReturningPart}) is trivially fulfilled if the
$\varphi_{A_{l}}$ are uniformly bounded on the sets $A_{l}^{\bullet}$, so that
$\sup_{l\geq1}\parallel1_{A_{l}^{\bullet}}\varphi_{A_{l}}\parallel_{\infty
}<\infty$, which will be the case in our applications of the theorem.\newline%
\textbf{c)} Note that (\ref{Eq_OrderOfTheTauL}) guarantees $k_{l}-\tau_{l}\sim
k_{l}$, so that (by Theorem \ref{T_AutoLLT_Hit}) condition
(\ref{Eq_SailingAfterTau}) is equivalent to $\left\vert \mu_{A_{l}^{\circ}%
}(T^{-\tau_{l}}\{\varphi_{A_{l}}=k_{l}-\tau_{l}\})-\mu(\varphi_{A_{l}}%
=k_{l})\right\vert =o(\mu(A_{l}))$.\newline\textbf{d)} Observe that neither
part of the theorem presupposes that the conditions should hold uniformly in
$(k_{l})$.\newline
\end{remark}

The crucial property (\ref{Eq_RetEquivHit}) of the conditional measures
$\mu_{A_{l}^{\circ}}$ is similar to the basic property (\ref{Eq_LLTgoodBaby}),
and it readily implies (\ref{Eq_LLT_Ret_Abstr0}). Part b) of the Theorem is
more substantial, and it will allows us to show that (\ref{Eq_RetEquivHit}) is
in fact satisfied in various interesting situations.\newline

We will actually establish a stronger version of the theorem, which asserts
uniform asymptotics $\nu_{l}(\varphi_{A_{l}}=k_{l})\sim\theta^{2}e^{-\theta
\mu(A_{l})k_{l}}\,\mu(A_{l})$\ for suitable sequences $(\nu_{l})$ of measures.
This variant will be important for the results of the next subsection. If, for
$l\geq1$, $\mathcal{V}_{l}$ is a family of probabilites on $(A_{l}%
,\mathcal{A}\cap A_{l})$, an element $(\nu_{l})$ of the product space $%
{\textstyle\prod\nolimits_{l\geq1}}
\mathcal{V}_{l}$ is a sequence of probability measures $\nu_{l}$ with $\nu
_{l}\in\mathcal{V}_{l}$ for every $l\geq1$. Given a measure $\nu_{l}$ we let
$\nu_{l,B}$ denote $\nu_{l}$ conditioned on $B\in\mathcal{A}$ so that
$\nu_{l,B}(E)=\nu_{l}(B)^{-1}\nu_{l}(B\cap E)$, $E\in\mathcal{A}$. We will prove

\begin{theorem}
[\textbf{LLT for exponential return times; extended version}]%
\label{T_LLT_RetGeneral}Let $T$ be an ergodic measure-preserving map on the
probability space $(X,\mathcal{A},\mu)$, $\theta\in(0,1]$, and $(A_{l}%
)_{l\geq1}$ a sequence in $\mathcal{A}$ such that $0<\mu(A_{l})\rightarrow0$
and $A_{l}=A_{l}^{\bullet}\cup A_{l}^{\circ}$ (measurable and disjoint).
Suppose that the normalized hitting times converge to an exponential law,
\begin{equation}
\mu(\mu(A_{l})\varphi_{A_{l}}\leq t)\Longrightarrow1-e^{-\theta t}\text{ \quad
as }l\rightarrow\infty\text{.}\label{Eq_AssmExpoHTS}%
\end{equation}
\textbf{(a)} For $l\geq1$ let $\mathcal{V}_{l}$ be a family of probabilites on
$(A_{l},\mathcal{A}\cap A_{l})$, and assume that
\begin{equation}
\sup_{\nu_{l}\in\mathcal{V}_{l}}\left\vert \nu_{l}(A_{l}^{\circ}%
)-\theta\right\vert \longrightarrow0\text{ \quad as }l\rightarrow
\infty\text{,}\label{Eq_ReturningPartGeneral1}%
\end{equation}
while
\begin{equation}
\mu(A_{l})\,\sup_{\nu_{l}\in\mathcal{V}_{l}}\parallel1_{A_{l}^{\bullet}%
}\varphi_{A_{l}}\parallel_{L_{\infty}(\nu_{l}^{\bullet})}\longrightarrow
0\text{ \quad as }l\rightarrow\infty\text{.}\label{Eq_ReturningPartGeneral2}%
\end{equation}
Here we let $\nu_{l}^{\bullet}:=\nu_{l,A_{l}^{\bullet}}$ (the normalized
restriction of $\nu_{l}$ to $A_{l}^{\bullet}$) if $\nu_{l}(A_{l}^{\bullet}%
)>0$, and $\nu_{l}^{\bullet}:=0$ otherwise (and analogously for $\nu
_{l}^{\circ}$). If for every sequence $(k_{l})\in\mathcal{K}(A_{l})$,
\begin{equation}
\left\vert \nu_{l}^{\circ}(\varphi_{A_{l}}=k_{l})-\mu(\varphi_{A_{l}}%
=k_{l})\right\vert =o(\mu(A_{l}))\text{ \quad}%
\begin{array}
[c]{c}%
\text{as }l\rightarrow\infty\text{, uniformly}\\
\text{in }(\nu_{l})\in%
{\textstyle\prod\nolimits_{l\geq1}}
\mathcal{V}_{l}\text{,}%
\end{array}
\label{Eq_RetEquivHitGeneral}%
\end{equation}
then, for every $\delta>0$,%
\begin{equation}
\nu_{l}(\varphi_{A_{l}}=k_{l})\sim\theta^{2}e^{-\theta\mu(A_{l})k_{l}}%
\,\mu(A_{l})\text{ \quad}%
\begin{array}
[c]{c}%
\text{as }l\rightarrow\infty\text{, uniformly in}\\
(\nu_{l})\in%
{\textstyle\prod\nolimits_{l\geq1}}
\mathcal{V}_{l}\text{ and }(k_{l})\in\mathcal{K}_{\delta}(A_{l})\text{.}%
\end{array}
\label{Eq_LLT_Ret_AbstrGeneral}%
\end{equation}
\textbf{(b)} Condition (\ref{Eq_RetEquivHitGeneral}) is satisfied whenever
there are measurable $\tau_{l}:A_{l}^{\circ}\rightarrow\overline{\mathbb{N}%
}_{0}$, $l\geq1$, such that, for every $\varepsilon>0$,%
\begin{equation}
\nu_{l}^{\circ}(\,\mu(A_{l})\tau_{l}>\varepsilon)=o(\mu(A_{l}))\text{ \quad}%
\begin{array}
[c]{c}%
\text{as }l\rightarrow\infty\text{, uniformly}\\
\text{in }(\nu_{l})\in%
{\textstyle\prod\nolimits_{l\geq1}}
\mathcal{V}_{l}\text{.}%
\end{array}
\label{Eq_OrderOfTheTauLGeneral1}%
\end{equation}
and some family $\mathcal{V}$ of probability measures on $(X,\mathcal{A})$
which is LLT-uniform for $(A_{l})$ and such that
\begin{equation}
\nu_{l,\{\tau_{l}=j\}}^{\circ}\circ T^{-j}\in\mathcal{V}\text{ \quad}%
\begin{array}
[c]{c}%
\text{for }l\geq1\text{, }\nu_{l}\in\mathcal{V}_{l}\text{, and those}\\
j\text{ for which }\nu_{l}^{\circ}(\tau_{l}=j)>0\text{,}%
\end{array}
\label{Eq_SailingAfterTauGeneral}%
\end{equation}
while for all sequences $(k_{l})\in\mathcal{K}(A_{l})$,%
\begin{equation}
\nu_{l}^{\circ}(\{\varphi_{A_{l}}\leq\tau_{l}\}\cap T^{-k_{l}}A_{l}%
)=o(\mu(A_{l}))\text{ \quad}%
\begin{array}
[c]{c}%
\text{as }l\rightarrow\infty\text{, uniformly}\\
\text{in }(\nu_{l})\in%
{\textstyle\prod\nolimits_{l\geq1}}
\mathcal{V}_{l}\text{.}%
\end{array}
\label{Eq_NothingHappensTooQuicklyGeneral}%
\end{equation}

\end{theorem}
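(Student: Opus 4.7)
The plan is to establish the two parts separately. Part~(a) is essentially a reduction to the automatic hitting-time LLT (Theorem~\ref{T_AutoLLT_Hit}), while part~(b) requires a conditioning argument that tracks the orbit for the first $\tau_l$ steps and then invokes the LLT-uniformity of $\mathcal{V}$.

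For part~(a), I first decompose $\nu_l = \nu_l(A_l^{\bullet})\nu_l^{\bullet} + \nu_l(A_l^{\circ})\nu_l^{\circ}$ and argue that the $A_l^{\bullet}$-piece contributes nothing for $(k_l)\in\mathcal{K}_{\delta}(A_l)$. Condition~(\ref{Eq_ReturningPartGeneral2}) forces the $\nu_l^{\bullet}$-essential supremum of $\varphi_{A_l}$ to be $o(1/\mu(A_l))$ uniformly in $\nu_l\in\mathcal{V}_l$, so it eventually drops below $k_l\geq\delta/\mu(A_l)$, whence $\nu_l^{\bullet}(\varphi_{A_l}=k_l)=0$ uniformly. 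What remains is $\nu_l(A_l^{\circ})\cdot\nu_l^{\circ}(\varphi_{A_l}=k_l)$; the first factor tends to $\theta$ uniformly by~(\ref{Eq_ReturningPartGeneral1}), the second equals $\mu(\varphi_{A_l}=k_l)+o(\mu(A_l))$ uniformly by~(\ref{Eq_RetEquivHitGeneral}), and Theorem~\ref{T_AutoLLT_Hit} applied to~(\ref{Eq_AssmExpoHTS}) gives $\mu(\varphi_{A_l}=k_l)\sim\theta e^{-\theta\mu(A_l)k_l}\mu(A_l)$ uniformly on $\mathcal{K}_{\delta}(A_l)$. Multiplying the three asymptotics yields~(\ref{Eq_LLT_Ret_AbstrGeneral}).

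For part~(b), the goal is to derive~(\ref{Eq_RetEquivHitGeneral}) from~(\ref{Eq_OrderOfTheTauLGeneral1}), (\ref{Eq_SailingAfterTauGeneral}), and~(\ref{Eq_NothingHappensTooQuicklyGeneral}). First I decompose $\nu_l^{\circ}(\varphi_{A_l}=k_l)$ by conditioning on the value of $\tau_l$. For $j<k_l$ one has $\{\tau_l=j,\varphi_{A_l}=k_l\}=\{\tau_l=j,\varphi_{A_l}>j,\varphi_{A_l}\circ T^j=k_l-j\}$, and dropping the ``$\varphi_{A_l}>j$'' clause (as well as sweeping in the indices $j\geq k_l$) produces an error set contained in $\{\varphi_{A_l}\leq\tau_l\}\cap T^{-k_l}A_l$, whose $\nu_l^{\circ}$-measure is $o(\mu(A_l))$ uniformly by~(\ref{Eq_NothingHappensTooQuicklyGeneral}). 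This gives
\[
\nu_l^{\circ}(\varphi_{A_l}=k_l) = \sum_{j\geq 0}\nu_l^{\circ}(\tau_l=j)\bigl(\nu_{l,\{\tau_l=j\}}^{\circ}\circ T^{-j}\bigr)(\varphi_{A_l}=k_l-j) + o(\mu(A_l)),
\]
uniformly in $(\nu_l)$ and $(k_l)\in\mathcal{K}_{\delta}(A_l)$. I then split this sum at $\mu(A_l)j=\varepsilon$: the tail contributes at most $\nu_l^{\circ}(\mu(A_l)\tau_l>\varepsilon)=o(\mu(A_l))$ by~(\ref{Eq_OrderOfTheTauLGeneral1}); for the head,~(\ref{Eq_SailingAfterTauGeneral}) places $\nu_{l,\{\tau_l=j\}}^{\circ}\circ T^{-j}$ in the LLT-uniform family $\mathcal{V}$, so the inner probability equals $\mu(\varphi_{A_l}=k_l-j)+o(\mu(A_l))$ uniformly, and Theorem~\ref{T_AutoLLT_Hit} combined with the continuity of $t\mapsto e^{-\theta t}$ gives $\mu(\varphi_{A_l}=k_l-j)=\mu(\varphi_{A_l}=k_l)(1+O(\varepsilon))$ uniformly over $\mu(A_l)j\leq\varepsilon$. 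Summing and then sending $\varepsilon\to 0$ after $l\to\infty$ yields~(\ref{Eq_RetEquivHitGeneral}).

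The hard part will be the uniformity bookkeeping in part~(b): every error must be genuinely $o(\mu(A_l))$ simultaneously in $(\nu_l)\in\prod_{l}\mathcal{V}_l$, in $(k_l)\in\mathcal{K}_{\delta}(A_l)$, and (after the usual interchange of $l$ and $\varepsilon$) uniformly as $\varepsilon\to 0$. This forces each hypothesis to be invoked in its uniform form, and requires checking that $(k_l-j)$ stays in, say, $\mathcal{K}_{\delta/2}(A_l)$ once $\varepsilon<\delta/2$, so that the uniform exponential LLT from Theorem~\ref{T_AutoLLT_Hit} applies to the shifted target. A minor technicality is the possibility $\tau_l=\infty$, which is absorbed into the tail via~(\ref{Eq_OrderOfTheTauLGeneral1}).
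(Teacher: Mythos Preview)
Your proposal is correct and follows essentially the same route as the paper's proof: part~(a) is the same decomposition $\nu_l=\nu_l(A_l^{\bullet})\nu_l^{\bullet}+\nu_l(A_l^{\circ})\nu_l^{\circ}$ followed by Theorem~\ref{T_AutoLLT_Hit} (the paper adds an explicit diagonalization to upgrade the ``for every $(k_l)$'' hypothesis~(\ref{Eq_RetEquivHitGeneral}) to uniformity in $(k_l)\in\mathcal{K}_\delta(A_l)$, which you implicitly use), and part~(b) is the same conditioning on $\{\tau_l=j\}$, the same use of~(\ref{Eq_NothingHappensTooQuicklyGeneral}) to drop the clause $\{\varphi_{A_l}>j\}$, and the same invocation of LLT-uniformity via Proposition~\ref{T_LLT_Hit_DensiFromU}(a). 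The only cosmetic difference is that the paper absorbs your ``$\varepsilon\to 0$ after $l\to\infty$'' step into a single sequence $\varepsilon_l\searrow 0$ chosen by a dyadic selection, which streamlines the bookkeeping you flag as the hard part.
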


\begin{remark}
\textbf{a)} Taking $\mathcal{V}_{l}:=\{\mu_{A_{l}}\}$ and constant functions
$\tau_{l}$, we see that this result contains Theorem \ref{T_LLT_Ret0}.
\newline\textbf{b)} The proof shows that (\ref{Eq_ReturningPartGeneral2}) can
be replaced by the assertion that for all sequences $(k_{l})\in\mathcal{K}%
(A_{l})$ we have $\nu_{l}^{\bullet}(\varphi_{A_{l}}=k_{l})=o(1/\mu(A_{l}))$
uniformly in $(\nu_{l})\in%
{\textstyle\prod\nolimits_{l\geq1}}
\mathcal{V}_{l}$.\newline\textbf{c)} Also, as can be seen from the proof,
(\ref{Eq_OrderOfTheTauLGeneral1}) can be replaced by the more generous
condition that for every $(k_{l})\in\mathcal{K}(A_{l})$ and $\varepsilon>0$,%
\begin{gather}
\nu_{l}^{\circ}(\,\mu(A_{l})\tau_{l}>\varepsilon)\longrightarrow0\text{ \quad
and \quad}\nu_{l}^{\circ}(\,\{\mu(A_{l})\tau_{l}>\varepsilon\}\cap
\{\varphi_{A_{l}}=k_{l}\})=o(\mu(A_{l}))\nonumber\\
\text{as }l\rightarrow\infty\text{, uniformly in }(\nu_{l})\in%
{\textstyle\prod\nolimits_{l\geq1}}
\mathcal{V}_{l}\text{.}\label{Eq_OrderOfTheTauLGeneral5}%
\end{gather}

\end{remark}%

\vspace{0.3cm}%
%

\noindent
\textbf{LLT for consecutive asymptotically exponential return and hitting
times. Conditions on positions of the hits.} Recall that $T_{A}:X\rightarrow
A$ denotes the first entrance map of a set $A$. Having concentrated on the
\emph{first} return- or hitting time $\varphi_{A}$ of a suitable small set $A$
so far, we now turn to the full sequence $\Phi_{A}:=(\varphi_{A},\varphi
_{A}\circ T_{A},\varphi_{A}\circ T_{A}^{2},\ldots)$ of consecutive waiting
times between successive visits to $A$. Distributional convergence of the
normalized sequences $\mu(A_{l})\Phi_{A_{l}}$ to an independent sequence
$(\mathcal{E}^{(0)},\mathcal{E}^{(1)},\ldots)$ of normalized exponential
random variables is equivalent to saying that the associated \emph{normalized
counting processes} $\mathrm{N}_{A_{l}}:X\rightarrow\mathbb{D}[0,\infty)$
given by $\mathrm{N}_{A_{l}}=(\mathrm{N}_{A_{l},t})_{t\geq0}$ with
$\mathrm{N}_{A_{l},t}:=%
{\textstyle\sum_{k=1}^{\left\lfloor t/\mu(A_{l})\right\rfloor }}
1_{A_{l}}\circ T^{k}$, converge to a standard Poisson counting process
$\mathrm{N}=(\mathrm{N}_{t})_{t\geq0}$ (using the Skorohod $\mathcal{J}_{1}%
$-topology on the space $\mathbb{D}[0,\infty)$ of cadlag functions
$g:[0,\infty)\rightarrow\mathbb{R}$). \emph{As a caveat we recall that having
an exponential limit law for the }first\emph{\ return times }$\varphi_{A_{l}}%
$\emph{\ does not automatically imply the latter behaviour}. Indeed, every
stationary sequence of exponential variables can occur in the limit. (This is
a special case of Theorem 2.1 in \cite{Z11}). Asymptotic independence thus
requires extra assumptions. Statements a) and b) of the next theorem provide
sufficient conditions for a local version of Poisson asymptotics expressed in
terms of the waiting time processes $\Phi_{A_{l}}$.

In parts a') and b') we further improve this result by conditioning on hitting
a specific part $C$ of a target set $A$. In the well-studied setup of
distributional convergence, such spatiotemporal limit theorems for dynamical
systems have been studied recently in \cite{PS2} and \cite{ZWhenAndWhere}.
Here we establish a local version of these refined results.

\begin{theorem}
[\textbf{LLT for consecutive times plus positions}]%
\label{T_LLT_ConsecutiveTimes}Under the assumptions of Theorem
\ref{T_LLT_RetGeneral} a) the following hold.\newline\newline\textbf{a)}
Suppose that for every $(k_{l})\in\mathcal{K}(A_{l})$,%
\begin{equation}
\nu_{l,\{\varphi_{A_{l}}=k_{l}\}}\circ T_{A_{l}}^{-1}\in\mathcal{V}_{l}\text{
\qquad}%
\begin{array}
[c]{c}%
\text{whenever }l\geq1\text{, }\nu_{l}\in\mathcal{V}_{l}\text{,}\\
\text{and }\nu_{l}(\varphi_{A_{l}}=k_{l})>0\text{.}%
\end{array}
\label{Eq_InvarianceVls}%
\end{equation}
Then, for every $d\geq1$ and $\delta>0$, we have as $l\rightarrow\infty$,
\begin{gather}
\nu_{l}\left(  \bigcap_{j=1}^{d}T_{A_{l}}^{-(j-1)}\{\varphi_{A_{l}}%
=k_{l}^{(j)}\}\right)  \sim\,\frac{\theta^{2d}\mu(A_{l})^{d}}{e^{\theta
\mu(A_{l})\sum\nolimits_{j=1}^{d}k_{l}^{(j)}}}\tag{$\clubsuit_d$}\\
\text{as }l\rightarrow\infty\text{, uniformly in }(\nu_{l})\in%
{\textstyle\prod\nolimits_{l\geq1}}
\mathcal{V}_{l}\text{ and }(k_{l}^{(1)}),\ldots,(k_{l}^{(d)})\in
\mathcal{K}(A_{l})\text{.}\nonumber
\end{gather}
\textbf{a')} Suppose, moreover, that $(C_{l}^{(1)}),\ldots,(C_{l}^{(d)})$ and
$(c_{l}^{(1)}),\ldots,(c_{l}^{(d)})$ are sequences in $\mathcal{A}\cap A_{l}$
and $(0,1]$, respectively, such that%
\begin{gather}
\nu_{l,C_{l}^{(j)}}\in\mathcal{V}_{l}\qquad\text{for }l\geq1\text{ and }%
\nu_{l}\in\mathcal{V}_{l},\text{ \qquad while}\label{Eq_kysdjhfbhjeabgfhjbevb}%
\\
\nu_{l,\{\varphi_{A_{l}}=k_{l}\}}\circ T_{A_{l}}^{-1}(C_{l}^{(j)})\sim
c_{l}^{(j)}\text{\quad as }l\rightarrow\infty\text{, uniformly in }(\nu
_{l})\in%
{\textstyle\prod\nolimits_{l\geq1}}
\mathcal{V}_{l}\text{.}\label{Eq_AsyNuCl}%
\end{gather}
Then,%
\begin{gather}
\nu_{l}\left(  \bigcap_{j=1}^{d}T_{A_{l}}^{-(j-1)}\left(  \{\varphi_{A_{l}%
}=k_{l}^{(j)}\}\cap T_{A_{l}}^{-1}C_{l}^{(j)}\right)  \right)  \sim
\frac{\,\theta^{2d}\mu(A_{l})^{d}\prod_{j=1}^{d}c_{l}^{(j)}}{e^{\theta
\mu(A_{l})\sum\nolimits_{j=1}^{d}k_{l}^{(j)}}}\tag{$\blacklozenge_d$}\\
\text{uniformly in }(\nu_{l})\in%
{\textstyle\prod\nolimits_{l\geq1}}
\mathcal{V}_{l}\text{ and }(k_{l}^{(1)}),\ldots,(k_{l}^{(d)})\in
\mathcal{K}(A_{l})\text{.}\nonumber
\end{gather}
\textbf{b)} In the situation of a), if $\mathcal{V}$ is LLT-uniform for
$(A_{l})$ with%
\begin{equation}
\nu_{\{\varphi_{A_{l}}=k_{l}\}}\circ T_{A_{l}}^{-1}\in\mathcal{V}_{l}%
\qquad\text{for }l\geq1\text{ and }\nu\in\mathcal{V}\text{,}%
\label{Eq_Hourglass0}%
\end{equation}
then, for every $d\geq1$ and $\delta>0$, we have as $l\rightarrow\infty$,%
\begin{gather}
\nu\left(  \bigcap_{j=1}^{d}T_{A_{l}}^{-(j-1)}\{\varphi_{A_{l}}=k_{l}%
^{(j)}\}\right)  \sim\,\frac{\theta^{2d-1}\mu(A_{l})^{d}}{e^{\theta\mu
(A_{l})\sum\nolimits_{j=1}^{d}k_{l}^{(j)}}}\tag{$\spadesuit_d$}\\
\text{uniformly in }\nu\in\mathcal{V}\text{ and }(k_{l}^{(1)}),\ldots
,(k_{l}^{(d)})\in\mathcal{K}(A_{l})\text{.}\nonumber
\end{gather}
\textbf{b')} Under the assumptions of a') and b), if%
\begin{equation}
\nu_{\{\varphi_{A_{l}}=k_{l}\}}\circ T_{A_{l}}^{-1}(C_{l}^{(1)})\sim
c_{l}^{(1)}\text{\quad as }l\rightarrow\infty\text{, uniformly in }\nu
\in\mathcal{V}\text{.}\label{Eq_Hourglass1}%
\end{equation}
then
\begin{gather}
\nu\left(  \bigcap_{j=1}^{d}T_{A_{l}}^{-(j-1)}\left(  \{\varphi_{A_{l}}%
=k_{l}^{(j)}\}\cap T_{A_{l}}^{-1}C_{l}^{(j)}\right)  \right)  \sim
\,\frac{\theta^{2d-1}\mu(A_{l})^{d}\prod_{j=1}^{d}c_{l}^{(j)}}{e^{\theta
\mu(A_{l})\sum\nolimits_{j=1}^{d}k_{l}^{(j)}}}\tag{$\lozenge_d$}\\
\text{uniformly in }\nu\in\mathcal{V}\text{ and }(k_{l}^{(1)}),\ldots
,(k_{l}^{(d)})\in\mathcal{K}(A_{l})\text{.}\nonumber
\end{gather}

\end{theorem}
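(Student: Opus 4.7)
The approach is induction on $d$, peeling off the first visit to $A_l$ and exploiting the fact that the invariance hypotheses (\ref{Eq_InvarianceVls}), (\ref{Eq_kysdjhfbhjeabgfhjbevb}), (\ref{Eq_Hourglass0}) are precisely what is needed to feed the resulting shifted measure back into the inductive assumption. The base case $d = 1$ of a) is the uniform LLT of Theorem \ref{T_LLT_RetGeneral}, while for b) it comes from Proposition \ref{T_LLT_Hit_DensiFromU} applied to the LLT-uniform family $\mathcal{V}$ with $F'(t) = \theta e^{-\theta t}$.

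For the inductive step of a), split
\begin{equation*}
\nu_l\Big(\bigcap_{j=1}^{d+1} T_{A_l}^{-(j-1)}\{\varphi_{A_l} = k_l^{(j)}\}\Big) = \nu_l\bigl(\varphi_{A_l} = k_l^{(1)}\bigr)\cdot \widetilde{\nu}_l\Big(\bigcap_{i=1}^{d} T_{A_l}^{-(i-1)}\{\varphi_{A_l} = k_l^{(i+1)}\}\Big),
\end{equation*}
with $\widetilde{\nu}_l := \nu_{l,\{\varphi_{A_l}=k_l^{(1)}\}} \circ T_{A_l}^{-1}$, which by (\ref{Eq_InvarianceVls}) lies in $\mathcal{V}_l$. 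Apply Theorem \ref{T_LLT_RetGeneral} to the first factor and the induction hypothesis (applied to $(\widetilde{\nu}_l) \in \prod_{l\geq 1}\mathcal{V}_l$) to the second, both uniformly, and multiply to obtain $(\clubsuit_{d+1})$. Part a') is obtained by one additional decomposition: after the same split, write $\widetilde{\nu}_l(C_l^{(1)} \cap \cdots) = \widetilde{\nu}_l(C_l^{(1)}) \cdot \widetilde{\nu}_{l, C_l^{(1)}}(\cdots)$, use (\ref{Eq_AsyNuCl}) to extract the factor $c_l^{(1)}$, note via (\ref{Eq_kysdjhfbhjeabgfhjbevb}) that $\widetilde{\nu}_{l, C_l^{(1)}} \in \mathcal{V}_l$, and iterate.

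Parts b) and b') reduce to a) and a') through one initial \emph{external} step. Proposition \ref{T_LLT_Hit_DensiFromU} yields the uniform asymptotic $\nu(\varphi_{A_l} = k_l^{(1)}) \sim \theta e^{-\theta \mu(A_l) k_l^{(1)}} \mu(A_l)$ for $\nu \in \mathcal{V}$; then by (\ref{Eq_Hourglass0}) the pushforward $\nu_{\{\varphi_{A_l}=k_l^{(1)}\}} \circ T_{A_l}^{-1}$ lies in $\mathcal{V}_l$, and the remaining $d-1$ intersections are supplied by a) (respectively, a'), using (\ref{Eq_Hourglass1}) for the first spatial factor in b')). The single factor of $\theta$ produced by this external step, in place of the $\theta^2$ from an internal return-time step, is exactly what turns $\theta^{2d}$ into $\theta^{2d-1}$ in $(\spadesuit_d)$ and $(\lozenge_d)$.

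The only real concern is preservation of uniformity across the multiplicative chain. This is harmless here: for $(k_l^{(j)})$ in a common $\mathcal{K}_\delta(A_l)$ the quantities $e^{-\theta \mu(A_l) k_l^{(j)}}$ are bounded above and below by positive constants depending only on $\delta$, so products of uniform $\sim$-equivalences remain uniform $\sim$-equivalences. A purely notational issue is that the conditional measures $\nu_{l,E}$ are undefined when $\nu_l(E) = 0$; in that case the joint probability vanishes identically and the asserted asymptotic holds trivially under the paper's $\sim 0$ convention.
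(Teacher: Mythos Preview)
Your proposal is correct and follows essentially the same route as the paper: induction on $d$, peeling off the first visit via the conditional-measure decomposition, using (\ref{Eq_InvarianceVls}), (\ref{Eq_kysdjhfbhjeabgfhjbevb}), (\ref{Eq_Hourglass0}) to place the pushforward back in $\mathcal{V}_l$, and invoking Theorem \ref{T_LLT_RetGeneral} (respectively Proposition \ref{T_LLT_Hit_DensiFromU}) for the base and first factor. Your remarks on uniformity and on the degenerate $\nu_l(E)=0$ case are also in line with the paper's treatment.
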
%

\vspace{0.3cm}%

\begin{remark}
There is no restriction on the asymptotics of $c_{l}^{(j)}$ as $l\rightarrow
\infty$.
\end{remark}

The next section discusses some natural situations in which this result applies.

\section{Local limit theorems in some concrete
situations\label{Sec_ConcreteLLTs}}

We illustrate the use of the abstract results above by considering some basic
piecewise invertible dynamical systems.%

\vspace{0.3cm}%
%

\noindent
\textbf{Piecewise invertible systems.} We consider situations in which
$(X,\mathrm{d}_{X})$ is a metric space with $\mathrm{diam}(X)<\infty$,
equipped with its Borel $\sigma$-algebra $\mathcal{A}=\mathcal{B}_{X}$. A
\emph{piecewise invertible probability preserving system} on $X$ is a
quintuple $\mathfrak{S}=(X,\mathcal{A},\mu,T,\xi)$, where $T:X\rightarrow X$
preserves the probability measure $\mu$ on $\mathcal{A}$, $\xi=\xi_{1}$ is a
(finite or) countable partition mod $\mu$ of $X$ into pairwise disjoint open
sets such that each \emph{branch} of $T$, i.e. its restriction to any of its
\emph{cylinders} $Z\in\xi$ is a homeomorphism onto $TZ$ (automatically
\emph{null-preserving} with respect to $\mu$, meaning that $\mu\mid_{Z}\circ
T^{-1}\ll\mu$). The system is called \emph{uniformly expanding} if there is
some $\rho\in(0,1)$ such that $\mathrm{d}_{X}(x,y)\leq\rho\cdot\mathrm{d}%
_{X}(Tx,Ty)$ whenever $x,y\in Z\in\xi$.

We let $\xi_{n}$ denote the family of \emph{cylinders of rank }$n$, that is,
the sets of the form $Z=[Z_{0},\ldots,Z_{n-1}]:=\bigcap_{i=0}^{n-1}T^{-i}%
Z_{i}$ with $Z_{i}\in\xi$. Write $\xi_{n}(x)$ for the element of $\xi_{n}$
containing $x$ (which is defined a.e.). Each \emph{iterate} $\mathfrak{S}%
^{n}=(X,\mathcal{A},\mu,T^{n},\xi_{n})$, $n\geq1$, of the system is again
piecewise invertible. The \emph{inverse branches} of $T$ and its iterates will
be denoted $v_{Z}:=(T^{n}\mid_{Z})^{-1}:T^{n}Z\rightarrow Z$, $Z\in\xi_{n}$.
As the branches are null-preserving, all $v_{Z}$ possess Radon-Nikodym
derivatives $v_{Z}^{\prime}:=d(\mu\circ v_{Z})/d\mu$. Some of the examples are
\emph{interval maps}, meaning that the $Z\in\xi$ are subintervals of
$\mathbb{R}$.

The system is \emph{Markov} if $\mu(TZ\cap Z^{\prime})>0$ for $Z,Z^{\prime}%
\in\xi$ implies $Z^{\prime}\subseteq TZ$, so that $TZ$ is $\xi$-measurable
(mod $\mu$). In this case each iterate $\mathfrak{S}^{n}$ is Markov as well,
and in fact $T^{n}Z$ is $\xi$-measurable (mod $\mu$) whenever $n\geq1$ and
$Z\in\xi_{n}$. Moreover,%
\begin{equation}
T^{k}C\text{ is }\xi_{l}\text{-measurable (mod }\mu\text{) for }k,l\geq1\text{
and }C\in\xi_{k+l}\text{. }\label{Eq_MbilityOfImages}%
\end{equation}
(Indeed, $C=B\cap T^{-k}A$ with $B\in\xi_{k}$, $A\in\xi_{l}$ and $T^{k}%
C=T^{k}B\cap A\in\{\varnothing,A\}$.)%

\vspace{0.3cm}%
%

\noindent
\textbf{Gibbs-Markov maps.} One important basic class of such Markov systems
$\mathfrak{S}=(X,\mathcal{A},\mu,T,\xi)$ is that of probability preserving
\emph{Gibbs-Markov maps} (\emph{GM maps}). This means that the system has a
uniformly expanding iterate $T^{N}$, and satisfies the \emph{big image
property}, in that $\flat:=\inf_{Z\in\xi}\mu(TZ)>0$. Moreover, the
$v_{Z}^{\prime}$, $Z\in\xi$, are required to have versions which are well
behaved in that there exists some $r>0$ such that $\left\vert v_{Z}^{\prime
}(x)/v_{Z}^{\prime}(y)-1\right\vert \leq r\,\mathrm{d}_{X}(x,y)$ whenever
$x,y\in TZ$, $Z\in\xi$ (see \cite{AD}). Equivalently, letting $\beta$ denote
the partition generated by $T\xi$, and $\mathrm{R}_{\beta}(w):=\sup_{B\in
\beta}\mathrm{R}_{B}(w)$ with $\mathrm{R}_{B}(w):=\inf\{s>0:w(x)\leq
(1+s\,\mathrm{d}_{X}(x,y))w(y)$ for $x,y\in B\}$ the \emph{regularity }of a
nonnegative function\emph{\ }$w$\emph{\ }on\emph{\ }$B$, we require that
$\widehat{T}\left(  \mu(Z)^{-1}1_{Z}\right)  =\mu(Z)^{-1}v_{Z}^{\prime}%
\in\mathcal{U}(r)$, where $\mathcal{U}(s):=\{u\in\mathcal{D}(\mu
):\mathrm{R}_{\beta}(u)\leq s\}$. In this case $r=r(\mathfrak{S})$ can (and
will) be chosen in such a way that in fact
\begin{equation}
\left\vert \frac{v_{Z}^{\prime}(x)}{v_{Z}^{\prime}(y)}-1\right\vert \leq
r\,\mathrm{d}_{X}(x,y)\text{ \quad whenever }n\geq1\text{ and }x,y\in
T^{n}Z,Z\in\xi_{n}\text{,}\label{Eq_RegularityForGMSys}%
\end{equation}
and $v_{Z}^{\prime}$ will always denote such versions of the a.e. defined
Radon-Nikodym derivatives $d(\mu\circ v_{Z})/d\mu$.%

\vspace{0.3cm}%

To rule out trivial obstacles to LLTs for return times caused by periodicities
of the system, we focus on \emph{mixing} GM-maps. For a first concrete variant
of our LLTs we consider the basic scenario of cylinders of increasing rank
which shrink to some given point $x^{\ast}$.

\begin{theorem}
[\textbf{LLT for shrinking cylinders of GM maps}]\label{T_LLTGMCyls1}Let
$(X,\mathcal{A},\mu,T,\xi)$ be a mixing probability preserving Gibbs-Markov
system. Let $x^{\ast}\in X$ be a point such that the cylinder $A_{l}:=\xi
_{l}(x^{\ast})$ is defined for every $l\geq1$. Fix any $r>0$ and set
$\mathcal{V}:=\{u\odot\mu:u\in\mathcal{U}(r)\}$ and $\mathcal{V}_{l}%
:=\{u\odot\mu:u\in\mathcal{U}_{l}\}$ with $\mathcal{U}_{l}=\mathcal{U}%
_{l}(r):=\{u\in\mathcal{D}(\mu):u=1_{A_{l}}u$ and $\mathrm{R}_{A_{l}}(u)\leq
r\}$.\newline\newline\textbf{A)} If $x^{\ast}$ is not periodic, let
$\theta:=1$. Then for any $d\geq1$ and $(k_{l}^{(1)}),\ldots,(k_{l}^{(d)}%
)\in\mathcal{K}(A_{l})$, statements ($\spadesuit_{d}$) and ($\clubsuit_{d}$)
hold.\newline\newline\textbf{B)} If $x^{\ast}$ is periodic, $T^{p}x^{\ast
}=x^{\ast}$ with $p\geq1$ minimal and $\theta:=1-v_{A_{p}}^{\prime}(x^{\ast
})\in(0,1)$, then for any $d\geq1$ and $(k_{l}^{(1)}),\ldots,(k_{l}^{(d)}%
)\in\mathcal{K}(A_{l})$, statements ($\spadesuit_{d}$) and ($\clubsuit_{d}$) hold.
\end{theorem}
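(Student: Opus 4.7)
The plan is to deduce both $(\spadesuit_{d})$ and $(\clubsuit_{d})$ directly from Theorem \ref{T_LLT_ConsecutiveTimes}(a)(b) applied to the families $\mathcal{V}$ and $\mathcal{V}_{l}$ of the statement. This reduces the work to verifying the hypotheses of Theorem \ref{T_LLT_RetGeneral}(a) together with the invariance conditions (\ref{Eq_InvarianceVls}) and (\ref{Eq_Hourglass0}). The exponential limit law (\ref{Eq_AssmExpoHTS}) with the prescribed $\theta$ is a classical hitting-time theorem for mixing Gibbs-Markov systems on shrinking cylinder neighbourhoods of a point (periodic or not). The LLT-uniformity of $\mathcal{V}$ needed for part (b) follows from Proposition \ref{T_LLT_Hit_DensiFromU}(b): mixing Gibbs-Markov maps have a spectral gap on the standard Lipschitz-type Banach space, which yields $\|\widehat{T}^{n}u-1_{X}\|_{\infty}\to 0$ uniformly on $\mathcal{U}(r)$.

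For the decomposition $A_{l}=A_{l}^{\bullet}\cup A_{l}^{\circ}$: case (A) is trivial, taking $A_{l}^{\bullet}:=\varnothing$ and $\theta=1$. In case (B), set $A_{l}^{\bullet}:=A_{l}\cap T^{-p}A_{l}$; by minimality of $p$ and a continuity argument along the single cylinder containing $A_{l}$, $\varphi_{A_{l}}\geq p$ on $A_{l}$ and $\varphi_{A_{l}}=p$ on $A_{l}^{\bullet}$ for $l$ large, so $A_{l}^{\bullet}$ agrees with the cylinder $\xi_{l+p}(x^{\ast})$. The distortion estimate (\ref{Eq_RegularityForGMSys}) then yields $\mu(A_{l}^{\bullet})/\mu(A_{l})\to v'_{A_{p}}(x^{\ast})=1-\theta$, and the same estimate applied to any $u\in\mathcal{U}(r)$ gives (\ref{Eq_ReturningPartGeneral1}) uniformly in $\nu_{l}\in\mathcal{V}_{l}$. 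Since $\varphi_{A_{l}}\equiv p$ on $A_{l}^{\bullet}$ while $\mu(A_{l})\to 0$, (\ref{Eq_ReturningPartGeneral2}) is immediate.

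The heart of the proof is (\ref{Eq_RetEquivHitGeneral}), which I would establish via Theorem \ref{T_LLT_RetGeneral}(b) with a constant sequence $\tau_{l}\equiv\tau$ for some large but fixed $\tau$. Condition (\ref{Eq_OrderOfTheTauLGeneral1}) is then automatic. The sailing condition (\ref{Eq_SailingAfterTauGeneral}) is handled by unpacking $\nu_{l,\{\tau_{l}=j\}}^{\circ}\circ T^{-j}$ as the push-forward under a bounded number of iterates of $\widehat{T}$ of a density supported on cylinders of rank $\geq l$; Gibbs-Markov distortion shows the resulting density lies in $\mathcal{U}(r^{\sharp})$ for some $r^{\sharp}=r^{\sharp}(r,\tau,\mathfrak{S})$, and the LLT-uniformity of the correspondingly enlarged $\mathcal{V}^{\sharp}$ (by the spectral gap argument above) closes the step. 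For (\ref{Eq_NothingHappensTooQuicklyGeneral}), I would use decay of correlations on Gibbs-Markov systems to factorise $\nu_{l}^{\circ}(\{\varphi_{A_{l}}\leq\tau\}\cap T^{-k_{l}}A_{l})$ asymptotically as $\nu_{l}^{\circ}(\varphi_{A_{l}}\leq\tau)\cdot\mu(A_{l})+o(\mu(A_{l}))$, uniformly in $\nu_{l}\in\mathcal{V}_{l}$ and $(k_{l})\in\mathcal{K}(A_{l})$; the first factor vanishes in the limit because a return within the bounded window $\tau$ forces the orbit into a cylinder whose measure is controlled by $\mu(A_{l})$.

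The invariance conditions (\ref{Eq_InvarianceVls}) and (\ref{Eq_Hourglass0}) rest on the same distortion mechanism: for $u\in\mathcal{U}(r)$, the normalised push-forward $\widehat{T}^{k_{l}}(u\cdot 1_{\{\varphi_{A_{l}}=k_{l}\}})/\nu(\varphi_{A_{l}}=k_{l})$ is a density on $A_{l}$ which, by a standard argument summing over the $k_{l}$-cylinders $Z$ with $Z\subseteq\{\varphi_{A_{l}}=k_{l}\}$ and $T^{k_{l}}Z\supseteq A_{l}$, belongs to $\mathcal{U}(r^{\sharp})$ with $r^{\sharp}$ depending only on the Gibbs-Markov constants. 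One therefore works from the outset with enlarged families $\mathcal{V}^{\sharp}\supseteq\mathcal{V}$ and $\mathcal{V}_{l}^{\sharp}\supseteq\mathcal{V}_{l}$ that are closed under the relevant operations and conclude for the original families a fortiori. The main obstacle is this uniform bookkeeping of regularity constants combined with a decay-of-correlations estimate strong enough to yield the $o(\mu(A_{l}))$ error in (\ref{Eq_NothingHappensTooQuicklyGeneral}) uniformly over $(k_{l})\in\mathcal{K}(A_{l})$ and over $\nu_{l}\in\mathcal{V}_{l}$.
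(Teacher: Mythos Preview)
Your overall architecture---reducing to Theorem~\ref{T_LLT_ConsecutiveTimes} via Theorem~\ref{T_LLT_RetGeneral}(b), with the decomposition $A_l^{\bullet}=A_l\cap T^{-p}A_l$ in case~B---matches the paper, and your verification of (\ref{Eq_ReturningPartGeneral1}), (\ref{Eq_ReturningPartGeneral2}), and the invariance conditions (\ref{Eq_InvarianceVls}), (\ref{Eq_Hourglass0}) is essentially correct (the paper simply assumes $r\ge r_*(\mathfrak{S})$ from the start, which absorbs your $r^\sharp$). There is, however, a genuine gap in your treatment of the sailing condition~(\ref{Eq_SailingAfterTauGeneral}).

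With a \emph{constant} $\tau_l\equiv\tau$, the measure $\nu_l^{\circ}\circ T^{-\tau}$ has density $\widehat{T}^{\tau}\bigl((\int_{A_l^{\circ}}u_l\,d\mu)^{-1}1_{A_l^{\circ}}u_l\bigr)$. Since $A_l^{\circ}$ is a union of cylinders of rank $l+p$, this density is supported on $T^{\tau}A_l^{\circ}$, a union of cylinders of rank $l+p-\tau$. For large $l$ these are strictly contained in the elements $B\in\beta$, so the density vanishes on part of each $B$ and is positive on another part; hence $\mathrm{R}_B=\infty$ and the density is \emph{not} in any $\mathcal{U}(r^{\sharp})$. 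Your claim that ``Gibbs-Markov distortion shows the resulting density lies in $\mathcal{U}(r^{\sharp})$'' is false: the distortion bound (\ref{Eq_ImageDensiCylGM}) requires applying $\widehat{T}^{n}$ with $n$ equal to the rank of the cylinder. Nor can one rescue the argument by enlarging $\mathcal{V}$: the family $\{\nu_l^{\circ}\circ T^{-\tau}:l\ge1,\ \nu_l\in\mathcal{V}_l\}$ fails the uniform-convergence hypothesis of Proposition~\ref{T_LLT_Hit_DensiFromU}(b), because for each fixed $n$ one can take $l>n+\tau$ and obtain a density still concentrated on a set of vanishing measure, so $\|\widehat{T}^{n}v_l-1_X\|_{\infty}\not\to0$ uniformly.

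The paper's fix is to let $\tau_l$ grow with $l$: one takes $\tau_l:=l+p$. Then $\widehat{T}^{l+p}$ maps each rank-$(l+p)$ cylinder onto its full image, so $\nu_l^{\circ}\circ T^{-\tau_l}$ genuinely has density in $\mathcal{U}(r)$ by (\ref{Eq_ImageDensiCylGM}) and (\ref{Eq_ClosedCtbleConvex}), giving (\ref{Eq_SailingAfterTauGeneral}). The exponential bound $\mu(A_l)\le\kappa q^{l}$ from (\ref{Eq_GMMapsContractionOfCylMeasure}) ensures $\mu(A_l)\tau_l\to0$, so (\ref{Eq_OrderOfTheTauLGeneral1}) survives. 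The price is that (\ref{Eq_NothingHappensTooQuicklyGeneral}) is no longer trivial: one must control $\mu_{A_l^{\circ}}(\{\varphi_{A_l}\le l+p\}\cap T^{-k_l}A_l)$, which the paper does by first showing $\varphi_{A_l}\ge j^{\ast}$ on $A_l^{\circ}$ for large $l$ (using aperiodicity in case~A and the structure of $A_l^{\circ}$ in case~B), and then summing the distortion estimate $\mu_{A_l^{\circ}}(T^{-j}A_l\cap T^{-k_l}A_l)\le C\,\mu(A_j)\mu(A_l)$ over $j^{\ast}\le j\le l$, exploiting $\sum_{j\ge j^{\ast}}\mu(A_j)<\varepsilon$ for $j^{\ast}$ large.
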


Another natural situation is that of small sets which consist of rank-one cylinders.

\begin{theorem}
[\textbf{Spatiotemporal LLT for unions of cylinders of GM maps}]%
\label{T_LLTGMCyls2}Let $(X,\mathcal{A},\mu,T,\xi)$ be a mixing probability
preserving Gibbs-Markov system, and $(A_{l})$ a sequence of asymptotically
rare events such that each $A_{l}$ is $\xi$-measurable. Set $\theta:=1$, fix
$r>0$ and define $\mathcal{V}:=\{(%
{\textstyle\int\nolimits_{C}}
u\,d\mu)^{-1}1_{C}\,u\odot\mu:u\in\mathcal{U}(r)$, $C\subseteq X$ a $\xi
$-measurable set, $%
{\textstyle\int\nolimits_{C}}
u\,d\mu>0\}$ and $\mathcal{V}_{l}:=\{(%
{\textstyle\int\nolimits_{C}}
u\,d\mu)^{-1}1_{C}\,u\odot\mu:u\in\mathcal{U}(r),C\subseteq A_{l}$ $\xi
$-measurable, $%
{\textstyle\int\nolimits_{C}}
u\,d\mu>0\}$.\newline\newline\textbf{A)} Let $d\geq1$ and $(k_{l}%
^{(1)}),\ldots,(k_{l}^{(d)})\in\mathcal{K}(A_{l}),$ then statements
($\spadesuit_{d}$) and ($\clubsuit_{d} $) hold.\newline\newline\textbf{B)}
Suppose, in addition, that $\mathrm{diam}(A_{l})\rightarrow0$ as
$l\rightarrow\infty$, that $A_{l}\subseteq A_{l}^{\prime}\ $for suitable
$A_{l}^{\prime}\in\beta$, and that $(C_{l}^{(1)}),\ldots,(C_{l}^{(d)})$ are
sequences of $\xi$-measurable sets with $C_{l}^{(j)}\subseteq A_{l}$. Then,
statements ($\lozenge_{d}$) and ($\blacklozenge_{d}$) hold with $c_{l}%
^{(j)}:=\mu_{A_{l}}(C_{l}^{(j)})$.
\end{theorem}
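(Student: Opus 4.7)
The plan is to derive both parts of Theorem~\ref{T_LLTGMCyls2} by verifying the hypotheses of Theorem~\ref{T_LLT_ConsecutiveTimes}, which in turn invokes Theorems~\ref{T_LLT_RetGeneral} and~\ref{T_LLT_Hit_DensiFromU}. Since $\theta=1$, I set $A_l^{\bullet}:=\varnothing$ and $A_l^{\circ}:=A_l$ throughout, so that (\ref{Eq_ReturningPartGeneral1})--(\ref{Eq_ReturningPartGeneral2}) hold trivially and $\nu_l^{\circ}=\nu_l$; see Remark~\ref{Rem_Afterthought1}~a).

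First I would establish the exponential distributional limit (\ref{Eq_AssmExpoHTS}) with $\theta=1$: for mixing GM maps and $\xi$-measurable asymptotically rare events $(A_l)$ this is the classical normalized Poisson hitting-time theorem, following from the spectral gap of $\widehat{T}$ on the space of regular densities together with the big-image property. Next, LLT-uniformity of $\mathcal{V}$ is handled by Proposition~\ref{T_LLT_Hit_DensiFromU}~(b): densities $u_C:=(\int_C u\,d\mu)^{-1}1_C u$ with $C$ $\xi$-measurable and $u\in\mathcal{U}(r)$ need not themselves lie in $\mathcal{U}(r)$, but one iterate of $\widehat{T}$ removes the indicator factor and, by the GM distortion estimate (\ref{Eq_RegularityForGMSys}), $\widehat{T}u_C$ lies in $\mathcal{U}(r)$ on its $\xi$-measurable support with uniformly bounded $L_{\infty}$-norm; the standard exponential sup-norm decay of $\widehat{T}^n$ on $\mathcal{U}(r)$ then yields (\ref{Eq_UnifUnifCge}) uniformly in $u_C$.

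The main obstacle is verifying the return-time condition (\ref{Eq_RetEquivHitGeneral}) via Theorem~\ref{T_LLT_RetGeneral}~(b). I take a slowly growing constant mixing lag $\tau_l:=\lfloor|\log\mu(A_l)|\rfloor$, so that $\mu(A_l)\tau_l\to 0$ in the strong sense required by (\ref{Eq_OrderOfTheTauLGeneral1}). For (\ref{Eq_SailingAfterTauGeneral}), the pushforward $\nu_l\circ T^{-\tau_l}$ of any $\nu_l\in\mathcal{V}_l$ has density $\widehat{T}^{\tau_l}u_C$, and the same distortion argument places this density in the family defining $\mathcal{V}$. For (\ref{Eq_NothingHappensTooQuicklyGeneral}), I decompose the event into $\{\varphi_{A_l}=j\}\cap T^{-k_l}A_l$ for $1\leq j\leq\tau_l$, push forward by $T^j$ using the Markov property, and invoke mixing to replace the probability of hitting $A_l$ at the remaining time $k_l-j$ by a factor of order $\mu(A_l)$; the resulting estimate $O(\tau_l\mu(A_l)^2)=o(\mu(A_l))$ is uniform in $\nu_l\in\mathcal{V}_l$.

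To finish the verification of Theorem~\ref{T_LLT_ConsecutiveTimes}, I check the invariance properties (\ref{Eq_InvarianceVls}) and (\ref{Eq_Hourglass0}) and the spatial asymptotics (\ref{Eq_AsyNuCl}) and (\ref{Eq_Hourglass1}). Here the $\xi$-measurability is decisive: the set $\{\varphi_{A_l}=k_l\}\cap C$ is a union of $\xi_{k_l}$-cylinders on each of which $T^{k_l}=T_{A_l}$ is a Markov homeomorphism onto a $\xi$-measurable subset of $A_l$, and the pushforward density, after renormalization, takes the form $(\int_{C'}u'\,d\mu)^{-1}1_{C'}u'$ with $C'\subseteq A_l$ $\xi$-measurable and $u'\in\mathcal{U}(r)$ by bounded distortion; this settles part~A. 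For part~B the additional hypothesis $\mathrm{diam}(A_l)\to 0$, combined with the uniform H\"older regularity enforced by $u'\in\mathcal{U}(r)$, implies $\nu_{\{\varphi_{A_l}=k_l\}}\circ T_{A_l}^{-1}(C_l^{(j)})=(1+o(1))\,\mu_{A_l}(C_l^{(j)})$ uniformly in $\nu\in\mathcal{V}$ and $\nu_l\in\mathcal{V}_l$, yielding (\ref{Eq_AsyNuCl}) and (\ref{Eq_Hourglass1}) with $c_l^{(j)}:=\mu_{A_l}(C_l^{(j)})$.
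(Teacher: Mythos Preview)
Your overall strategy coincides with the paper's: verify the hypotheses of Theorem~\ref{T_LLT_ConsecutiveTimes} (hence of Theorem~\ref{T_LLT_RetGeneral}) with $A_l^{\bullet}=\varnothing$, $\theta=1$, and then check the invariance conditions~(\ref{Eq_InvarianceVls}), (\ref{Eq_Hourglass0}), (\ref{Eq_kysdjhfbhjeabgfhjbevb}) and the spatial asymptotics~(\ref{Eq_AsyNuCl}), (\ref{Eq_Hourglass1}) via $\xi$-measurability of the relevant sets and the bounded-distortion bound~(\ref{Eq_RatiosOnShortSets}). Those parts match the paper essentially line for line.

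The one substantive difference is your choice of mixing lag. You take $\tau_l=\lfloor|\log\mu(A_l)|\rfloor$, whereas the paper simply sets $\tau_l:=1$. The paper's choice exploits the specific feature of this theorem that the $A_l$ (and the $C\subseteq A_l$ defining $\mathcal{V}_l$) are unions of \emph{rank-one} cylinders: a single application of $\widehat{T}$ already maps the density of any $\nu_l\in\mathcal{V}_l$ into $\mathcal{U}(r)$ (by the $n=1$ case of~(\ref{Eq_ImageDensiCylGM}) together with~(\ref{Eq_ClosedCtbleConvex})), so~(\ref{Eq_SailingAfterTauGeneral}) is immediate, and~(\ref{Eq_NothingHappensTooQuicklyGeneral}) reduces to the single estimate $\nu_l(T^{-1}A_l\cap T^{-k_l}A_l)=O(\mu(A_l)^2)$, obtained by two applications of~(\ref{Eq_UpperDistortBound}). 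Your growing $\tau_l$ also works---the bound $\sum_{j\leq\tau_l}\nu_l(T^{-j}A_l\cap T^{-k_l}A_l)=O(\tau_l\,\mu(A_l)^2)=o(\mu(A_l))$ is correct once one passes to $\widehat{T}u_C\in\mathcal{U}(r)$ and uses $\mu(A_l\cap T^{-(k_l-j)}A_l)\leq(e^r/\flat)\mu(A_l)^2$---but it imports the mechanism tailored to the shrinking-cylinder setting of Theorem~\ref{T_LLTGMCyls1} (where the rank forces $\tau_l\approx l$) into a situation where one step already suffices.

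One small caveat: your direct appeal to Proposition~\ref{T_LLT_Hit_DensiFromU}(b) for LLT-uniformity of $\mathcal{V}$ is not quite clean, since the densities $u_C=(\int_C u\,d\mu)^{-1}1_C u$ are \emph{not} uniformly bounded in $L_\infty(\mu)$ as $\mu(C)$ varies, which is one of the hypotheses of that proposition. The paper is equally terse at this point; the missing step is precisely the one-iterate trick you mention (pass from $u_C$ to $\widehat{T}u_C\in\mathcal{U}(r)$), after which Proposition~\ref{T_LLT_Hit_DensiFromU}(b) applies to $\mathcal{U}(r)$ and an $O(\mu(A_l)^2)$ correction handles the remainder.
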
%

\vspace{0.3cm}%

\section{Proofs for Section \ref{Sec_AbstractLLTs}}%

\noindent
\textbf{More background material.} A fundamental result on return- and hitting
time distributions of small sets is the following relation between their
respective limiting behaviours, see \cite{HaydnLacVai05} or \cite{AS}.

\begin{theorem}
[\textbf{Hitting-time statistics versus return-time statistics}]%
\label{T_HittingTimeVsReturnTimeLimits}Let $(X,\mathcal{A},\mu,T)$ be an
ergodic probability-preserving system, and $(A_{l})_{l\geq1}$ a sequence of
asymptotically rare events. Then
\begin{equation}
\mu(\mu(A_{l})\,\varphi_{A_{l}}\leq t)\Longrightarrow F(t)\text{ \quad for
some sub-probability }F\text{ on }[0,\infty)\label{Eq_HitHittingTimeCge}%
\end{equation}
iff
\begin{equation}
\mu_{A_{l}}(\mu(A_{l})\,\varphi_{A_{l}}\leq t)\Longrightarrow\widetilde
{F}(t)\text{ \quad for some sub-probability }\widetilde{F}\text{ on }%
[0,\infty)\text{.}\label{Eq_RetReturnTimeCge}%
\end{equation}
In this case $F$ and $\widetilde{F}$ satisfy \newline%
\begin{equation}%
{\textstyle\int_{0}^{t}}
(1-\widetilde{F}(s))\,ds=F(t)\text{\ \quad for }t\geq0\text{.}%
\label{Eq_RelationBetweenHittingAndReturnDistrFun}%
\end{equation}
Through this integral equation each of $F$ and $\widetilde{F}$ uniquely
determines the other. Moreover, $F$ is necessarily continuous and concave with
$F(t)\leq t$, while $\widetilde{F}$ is always a probability distribution
function satisfying $%
{\textstyle\int_{0}^{\infty}}
(1-\widetilde{F}(s))\,ds\leq1$.
\end{theorem}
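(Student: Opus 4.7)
The plan is to derive both implications of the equivalence from a single exact identity, and then to read off the analytic properties of $F$ and $\widetilde{F}$ from the resulting integral representation. To set this up, observe that $\{\varphi_{A_l} > n\} = T^{-1}(A_l^c \cap \{\varphi_{A_l} > n-1\})$, so $T$-invariance gives the difference equation
\[ \mu\{\varphi_{A_l} > n-1\} - \mu\{\varphi_{A_l} > n\} = \mu(A_l)\,\mu_{A_l}\{\varphi_{A_l} > n-1\}, \qquad n \geq 1. \]
Summing (with summability ensured by Kac's formula $\mu(A_l)\sum_{k \geq 0}\mu_{A_l}\{\varphi_{A_l} > k\} = 1$, and recurrence ensuring $\mu\{\varphi_{A_l} > N\} \to 0$) telescopes to the key identity
\[ \mu\{\varphi_{A_l} > n\} = \mu(A_l)\sum_{k \geq n}\mu_{A_l}\{\varphi_{A_l} > k\}, \qquad n \geq 0. \]
Setting $F_l(t) := \mu\{\mu(A_l)\varphi_{A_l} \leq t\}$ and $\widetilde{F}_l(s) := \mu_{A_l}\{\mu(A_l)\varphi_{A_l} \leq s\}$, sandwiching the Riemann sum on the right between $\int_{n_l\mu(A_l)}^\infty(1-\widetilde{F}_l(s))\,ds$ and $\int_{(n_l-1)\mu(A_l)}^\infty(1-\widetilde{F}_l(s))\,ds$ (using monotonicity of $1-\widetilde{F}_l$), and then rewriting the tail through the continuous Kac identity $\int_0^\infty(1-\widetilde{F}_l(s))\,ds = 1$, yields the finite-$l$ relation
\[ 1 - F_l(n_l\mu(A_l)) = 1 - \int_0^{n_l\mu(A_l)}(1-\widetilde{F}_l(s))\,ds + O(\mu(A_l)). \]

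The forward direction now becomes straightforward. Assuming $\widetilde{F}_l \Rightarrow \widetilde{F}$ and choosing $n_l$ with $n_l\mu(A_l) \to t$ for any $t > 0$, bounded convergence applies to the last display (the integrand is uniformly bounded by $1$ on the compact interval $[0, t+1]$) and yields $F_l(t) \to \int_0^t(1-\widetilde{F}(s))\,ds =: F(t)$. This simultaneously produces the distributional convergence of hitting times and the integral relation of the theorem. The asserted structural properties fall out at once: since $1-\widetilde{F}$ is non-negative, bounded by $1$, and non-increasing, $F$ is absolutely continuous, concave, satisfies $F(t) \leq t$, and has $F(\infty) \leq 1$. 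Moreover $\widetilde{F}$ must be a genuine probability distribution, for otherwise $\liminf_{s \to \infty}(1-\widetilde{F}(s)) > 0$ would force $\int_0^\infty(1-\widetilde{F})\,ds = \infty$, contradicting $F(\infty) \leq 1$.

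For the converse, assume $F_l \Rightarrow F$ and argue by compactness and uniqueness. Kac's formula gives $E_{\mu_{A_l}}[\mu(A_l)\varphi_{A_l}] = 1$ for every $l$, so Markov's inequality yields the uniform tail bound $\mu_{A_l}\{\mu(A_l)\varphi_{A_l} > M\} \leq 1/M$, which is precisely tightness of the family $(\widetilde{F}_l)$. By Helly's selection theorem, every subsequence has a sub-subsequence along which $\widetilde{F}_{l_k} \Rightarrow \widetilde{G}$ for some probability distribution $\widetilde{G}$ on $[0,\infty)$. The forward argument applied along this sub-subsequence forces $F(t) = \int_0^t(1-\widetilde{G}(s))\,ds$ at every continuity point, which determines $\widetilde{G}$ uniquely from $F$ via Lebesgue differentiation. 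Since every convergent sub-subsequence must produce the same $\widetilde{G}$, the full sequence $(\widetilde{F}_l)$ converges, completing the equivalence. The main subtle ingredient throughout is precisely this tightness step; without it mass could escape to infinity in the prelimit, and the Riemann-sum-to-integral passage in the key identity would no longer reproduce the correct limit on the right-hand side.
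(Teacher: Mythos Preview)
The paper does not prove this theorem itself; it states the result as background material at the beginning of Section~4 and simply refers to \cite{HaydnLacVai05} and \cite{AS} for the proof. Your argument is correct and self-contained, and it follows essentially the same route as those references: the exact identity $\mu\{\varphi_{A}>n\}=\mu(A)\sum_{k\ge n}\mu_{A}\{\varphi_{A}>k\}$ (whose increment form $\mu\{\varphi_{A}=k\}=\mu(A\cap\{\varphi_{A}\ge k\})$ the paper also records and uses elsewhere, see (\ref{Eq_MeasureInducing})), a Riemann-sum sandwich plus bounded convergence for the implication $\widetilde F\Rightarrow F$, and Helly selection with uniqueness of the derivative for the converse.

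One small comment: the tightness step via Kac and Markov is not strictly needed to run the Helly argument (sub-probability subsequential limits always exist, and your forward implication and the ensuing uniqueness argument go through for those just as well), but it does give an independent direct reason why any limit $\widetilde F$ must be a genuine probability, so including it is harmless.
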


\begin{remark}
\label{Rem_PropF}This result explains why any limit $F$ in
(\ref{Eq_HitHittingTimeCge}) has a density with a non-increasing cadlag
version $F^{\prime}$, as we can simply take $F^{\prime}(t):=1-\widetilde
{F}(t)$. It also shows that $0\leq F^{\prime}\leq1$.
\end{remark}%

\vspace{0.3cm}%
%

\noindent
\textbf{Abstract LLTs for hitting and return times.} We can now turn to the

\begin{proof}
[\textbf{Proof of Theorem \ref{T_LLT_Hit}}]\textbf{(a)} The key to this
statement is the standard fact (from the theory of induced transformations)
that, for any $A\in\mathcal{A}$ of positive measure,
\begin{equation}
\mu(\varphi_{A}=k)=\mu(A\cap\{\varphi_{A}\geq k\})\text{ \quad for }%
k\geq1\text{.}\label{Eq_MeasureInducing}%
\end{equation}
(Since $\{\varphi_{A}>k\}=T^{-1}(A^{c}\cap\{\varphi_{A}\geq k\})$ and hence
$\mu(\varphi_{A}>k)=\mu(\varphi_{A}\geq k)-\mu(A\cap\{\varphi_{A}\geq k\})$.)
Recalling Theorem \ref{T_HittingTimeVsReturnTimeLimits}, we see that our
assumption (\ref{Eq_DLT_HTS}) implies (\ref{Eq_RetReturnTimeCge}) with $F$ and
$\widetilde{F}$ related by (\ref{Eq_RelationBetweenHittingAndReturnDistrFun}).
In view of the latter, if $t>0 $ is a continuity point of $F^{\prime}$, then
it is also a continuity point of the probability distribution function
$\widetilde{F}$, and $F^{\prime}(t)=1-\widetilde{F}(t)$. By elementary
arguments, (\ref{Eq_RetReturnTimeCge}) implies that for any sequence $(k_{l})$
in $\mathbb{N} $ with $\mu(A_{l})k_{l}\rightarrow t$ we have
\[
\mu_{A_{l}}(\mu(A_{l})\varphi_{A_{l}}\geq\mu(A_{l})k_{l})\rightarrow
1-\widetilde{F}(t)=F^{\prime}(t)\text{ \quad as }l\rightarrow\infty\text{.}%
\]
Now (\ref{Eq_LLT_HTS_Simple}) follows since (\ref{Eq_MeasureInducing}) shows
that
\begin{equation}
\mu(\varphi_{A_{l}}=k_{l})=\mu_{A_{l}}(\mu(A_{l})\varphi_{A_{l}}\geq\mu
(A_{l})k_{l})\,\mu(A_{l})\text{.}\label{Eq_Astep}%
\end{equation}
\textbf{(c)} Similarly, under the assumption of \textbf{(c)}, we see that
$F^{\prime}=1-\widetilde{F}$ on $I$, and this function is uniformly continuous
on any compact subset $J$ of $I$. It is then straightforward that
\[
\mu_{A_{l}}(\mu(A_{l})\varphi_{A_{l}}\geq t)\rightarrow F^{\prime}(t)\text{
\quad as }l\rightarrow\infty\text{, uniformly in }t\in J\text{,}%
\]
and a second look at (\ref{Eq_Astep}) proves (\ref{Eq_LLT_HTS_cpt1}%
).\newline\newline\textbf{(b)} To prove this converse to (a), note first that
if $A\in\mathcal{A}$, then
\begin{equation}
\mu(\varphi_{A}=k)\geq\mu(\varphi_{A}=k+1)\text{ \quad for }k\geq1\text{,}%
\end{equation}
because $\{\varphi_{A}=k+1\}\subseteq T^{-1}\{\varphi_{A}=k\}$ and $T$
preserves $\mu$. Therefore, (\ref{Eq_AssmLLTHitConverse}) implies that $G$ is
non-increasing on $D$. It thus extends to a monotone (hence measurable)
function $G:[0,\infty)\rightarrow\lbrack0,\infty)$, so that $F(t):=\int
_{0}^{t}G(s)\,ds$ is (uniquely) defined for all $t\geq0$.\newline

Below we show that any sequence of indices $l_{j}\nearrow\infty$ contains a
further subsequence of indices $l_{j_{i}}\nearrow\infty$ along which
\begin{equation}
\mu(\mu(A_{l_{j_{i}}})\varphi_{A_{l_{j_{i}}}}\leq t)\Longrightarrow F(t)\text{
\quad as }i\rightarrow\infty\text{.}%
\end{equation}
By a routine argument, this implies (\ref{Eq_DLT_HTS}).\newline\newline To
this end, start from $(l_{j})$ and appeal to the Helly selection principle to
obtain $l_{j_{i}}\nearrow\infty$ and a sub-probability distribution function
$H$ on $[0,\infty)$ such that $\mu(\mu(A_{l_{j_{i}}})\varphi_{A_{l_{j_{i}}}%
}\leq t)\Longrightarrow H(t)$ as $i\rightarrow\infty$. Apply Theorem
\ref{T_HittingTimeVsReturnTimeLimits} to $(A_{l_{j_{i}}})$ to see that
$H(0)=0=F(0)$, and that $H$ has a non-increasing density with right-continuous
version $H^{\prime}$ on $[0,\infty)$. Let $D_{\ast}:=\{t\in D:H^{\prime}$ and
$F^{\prime}$ are continuous at $t\}$, then $D_{\ast}$ has full Lebesgue
measure in $(0,\infty)$.

Now take any $t\in D_{\ast}$ and a sequence $(k_{l})_{l\geq1}$ of integers for
which $\mu(A_{l})k_{l}\rightarrow t$. Having already established statement
(a), we see that
\[
\mu(\varphi_{A_{l_{j_{i}}}}=k_{l_{j_{i}}})\sim H^{\prime}(t)\cdot
\mu(A_{l_{j_{i}}})\text{ \quad as }l\rightarrow\infty\text{,}%
\]
but due to assumption (\ref{Eq_AssmLLTHitConverse}) we also have $\mu
(\varphi_{A_{l_{j_{i}}}}=k_{l_{j_{i}}})\sim G(t)\cdot\mu(A_{l_{j_{i}}})$.
Therefore, $H^{\prime}(t)=G(t)=F^{\prime}(t)$ for $t\in D_{\ast}$, proving
that $H=F$.
\end{proof}%

\vspace{0.3cm}%

We can then validate the asserted robustness property.

\begin{proof}
[\textbf{Proof of Theorem \ref{T_RobustLLThit}}]\textbf{(i)} Assume
(\ref{T_bchscbhdsbchsbch}), then Theorem \ref{T_LLT_Hit} (b) ensures
distributional convergence of hitting times, $\mu(\mu(A_{l})\varphi_{A_{l}%
}\leq t)\Longrightarrow F(t)$ as in (\ref{Eq_DLT_HTS}), with $F(t):=\int
_{0}^{t}G(s)\,ds$ and $G=F^{\prime}$. This, however, is robust under passage
to $(B_{l})$, meaning that $\mu(\mu(B_{l})\varphi_{B_{l}}\leq
t)\Longrightarrow F(t)$ (see below). Theorem \ref{T_LLT_Hit} (a) then gives
$\mu(\varphi_{B_{l}}=k_{l})\sim G(t)\cdot\mu(B_{l})$ whenever $\mu(A_{l}%
)k_{l}\rightarrow t\in D$, as required.\newline\newline\textbf{(ii)} To see
that indeed $\mu(\mu(B_{l})\varphi_{B_{l}}\leq t)\Longrightarrow F(t)$ we can,
for instance, invoke results from \cite{Z11} (using the notation introduced
there): Suppose otherwise, then there is a subsequence of indices
$l_{j}\nearrow\infty$, a continuity point $t_{\ast}>0$ of $F$, and some
$\vartheta>0$ such that $\mid\mu(\mu(B_{l_{j}})\varphi_{B_{l_{j}}}\leq
t_{\ast})-F(t_{\ast})\mid\geq\vartheta$ for $j\geq1$. A standard compactness
argument (Lemma 4.4 of \cite{Z11}) shows that there is a further subsequence
of indices $l_{j_{i}}\nearrow\infty$ along which not only the first hitting
time $\varphi_{A_{l}}$, but even the full sequence $\Phi_{A_{l}}%
=(\varphi_{A_{l}},\varphi_{A_{l}}\circ T_{A_{l}},\varphi_{A_{l}}\circ
T_{A_{l}}^{2},\ldots)$ of all consecutive hitting times converges in
distribution to some process $\Phi=(\varphi^{(0)},\varphi^{(1)},\varphi
^{(2)},\ldots)$, in that $\mu(A_{l_{j_{i}}})\Phi_{A_{l_{j_{i}}}}\overset{\mu
}{\Longrightarrow}\Phi$ (meaning convergence of finite-dimensional marginals),
where of course $\varphi^{(0)}$ has distribution function $F$. Now Corollary
3.1 of \cite{Z11} guarantees that this carries over to $(B_{l_{j_{i}}})$, so
that $\mu(B_{l_{j_{i}}})\Phi_{B_{l_{j_{i}}}}\overset{\mu}{\Longrightarrow}%
\Phi$ with the same limit. Projecting onto the first component gives $\mu
(\mu(B_{l_{j_{i}}})\varphi_{B_{l_{j_{i}}}}\leq t)\Longrightarrow F(t)$, which
contradicts the $\vartheta$-estimate above.
\end{proof}%

\vspace{0.3cm}%

Next, we show that there are always initial measures $\nu$ violating the LLT.

\begin{proof}
[\textbf{Proof of Proposition \ref{P_BadInitialDistrForLLTHit}}]Choose any
$\delta>0$. By Theorem \ref{T_LLT_Hit}, assumption (\ref{Eq_yxcvvcxy}) implies
(\ref{Eq_LLT_HTS_Simple}). In particular, $\mu(\varphi_{A_{l}}=k_{l}%
)\rightarrow0,$ and we can pick $l_{j}\nearrow\infty$ s.t. $\sum_{j\geq1}%
\mu(\varphi_{A_{l_{j}}}=k_{l_{j}})<\delta$. This ensures that $B:=%
{\textstyle\bigcap\nolimits_{j\geq1}}
\{\varphi_{A_{l_{j}}}=k_{l_{j}}\}^{c}$ satisfies $\mu(B)>1-\delta$, and it is
immediate that $\mu_{B}(\varphi_{A_{l_{j}}}=k_{l_{j}})=0$ for $j\geq1$,
proving (\ref{Eq_yxcvvcxyLLTfails}). On the other hand, Proposition
\ref{P_StrongDistrCgeHittingTimes} guarantees that (\ref{Eq_yxcvvcxy}) implies
(\ref{Eq_dfgvdfvjhacvvvvvvvvvvvvvvvvv}).
\end{proof}%

\vspace{0.3cm}%
Here is an observation required for the proof of the next theorems.

\begin{lemma}
\label{L_Lemma}Let $T$ be an ergodic measure-preserving map on the probability
space $(X,\mathcal{A},\mu)$, and $A\in\mathcal{A}$. Then
\begin{equation}
1_{\{\varphi_{A}=k\}}=1_{\{\varphi_{A}=k-j\}}\circ T^{j}-1_{\{\varphi_{A}\leq
j\}\cap\{\varphi_{A}\circ T^{j}=k-j\}}\text{ \quad for }0\leq j<k\text{,}%
\label{Eq_LLL1}%
\end{equation}
and
\begin{equation}
\mu(\{\varphi_{A}\leq j\}\cap\{\varphi_{A}\circ T^{j}=m\})=\mu(A\cap
\{m\leq\varphi_{A}<m+j\})\text{ \quad for }j,m\geq1\text{.}\label{Eq_LLL2}%
\end{equation}

\end{lemma}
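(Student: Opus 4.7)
My plan is to handle (\ref{Eq_LLL1}) first by an elementary pointwise identity, and then deduce (\ref{Eq_LLL2}) from it by integrating against $\mu$ and invoking formula (\ref{Eq_MeasureInducing}) from the proof of Theorem~\ref{T_LLT_Hit} (which reads $\mu(\varphi_A = n) = \mu(A \cap \{\varphi_A \geq n\})$). No ergodicity or hyperbolicity of $T$ is used beyond measure-preservation.

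The core of (\ref{Eq_LLL1}) is the set identity
\[
\{\varphi_A = k\} \;=\; \{\varphi_A > j\} \cap T^{-j}\{\varphi_A = k - j\}, \qquad 0 \leq j < k,
\]
which simply records that the first visit of $(T^n x)_{n \geq 1}$ to $A$ occurs at time $k$ iff (a) no visit occurs in times $1,\dots,j$ (i.e., $\varphi_A(x) > j$) and (b) the first visit of $(T^n(T^j x))_{n \geq 1}$ to $A$ occurs at time $k-j$ (i.e., $\varphi_A(T^j x) = k-j$). Both inclusions follow by a one-line unpacking of the definition of $\varphi_A$. Given this, I split disjointly on the value of $\varphi_A$ at the threshold $j$:
\[
1_{\{\varphi_A = k - j\}} \circ T^j \;=\; 1_{\{\varphi_A > j\} \cap \{\varphi_A \circ T^j = k - j\}} + 1_{\{\varphi_A \leq j\} \cap \{\varphi_A \circ T^j = k - j\}},
\]
identify the first summand on the right as $1_{\{\varphi_A = k\}}$ via the set identity above, and solve for $1_{\{\varphi_A = k\}}$ to obtain (\ref{Eq_LLL1}).

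For (\ref{Eq_LLL2}), I integrate (\ref{Eq_LLL1}) against $\mu$. The $T$-invariance of $\mu$ gives $\int 1_{\{\varphi_A = k - j\}} \circ T^j \, d\mu = \mu(\varphi_A = k - j)$, so rearranging yields
\[
\mu\bigl(\{\varphi_A \leq j\} \cap \{\varphi_A \circ T^j = k - j\}\bigr) \;=\; \mu(\varphi_A = k - j) - \mu(\varphi_A = k).
\]
Setting $m := k - j \geq 1$ and applying (\ref{Eq_MeasureInducing}) to both terms on the right, the right-hand side becomes $\mu(A \cap \{\varphi_A \geq m\}) - \mu(A \cap \{\varphi_A \geq m + j\}) = \mu(A \cap \{m \leq \varphi_A < m + j\})$, which is exactly (\ref{Eq_LLL2}).

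There is no real obstacle here; the argument is bookkeeping once one has spotted the correct split at the threshold $j$. The only thing to watch is the strict/non-strict inequality at the cutoff $l = j$, and the fact that both sides of (\ref{Eq_LLL1}) implicitly require $j < k$ so that $k - j \geq 1$ and $\varphi_A = k - j$ is a meaningful event.
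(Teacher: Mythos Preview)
Your proof of (\ref{Eq_LLL1}) is essentially identical to the paper's: both rest on the set identity $\{\varphi_A=k\}=\{\varphi_A>j\}\cap T^{-j}\{\varphi_A=k-j\}$ and the obvious disjoint split according to whether $\varphi_A>j$ or $\varphi_A\leq j$.

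For (\ref{Eq_LLL2}), however, you take a genuinely different route. The paper argues directly at the set level: it decomposes $\{\varphi_A\leq j\}\cap\{\varphi_A\circ T^j=m\}$ as the disjoint union $\bigcup_{i=1}^{j}T^{-i}(A\cap\{\varphi_A=m+j-i\})$ (indexing by the \emph{last} time $i\in\{1,\dots,j\}$ at which the orbit visits $A$), applies $T$-invariance termwise, and sums. Your argument instead integrates (\ref{Eq_LLL1}) against $\mu$ to obtain $\mu(\{\varphi_A\leq j\}\cap\{\varphi_A\circ T^j=m\})=\mu(\varphi_A=m)-\mu(\varphi_A=m+j)$, and then converts the right-hand side via the Kac-type identity (\ref{Eq_MeasureInducing}). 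This is correct and, since (\ref{Eq_MeasureInducing}) has already been established earlier in the paper, there is no circularity. Your approach is more economical in that it recycles (\ref{Eq_LLL1}) and an already-known fact rather than introducing a separate combinatorial decomposition; the paper's approach has the virtue of being self-contained and of exhibiting the underlying set-level structure explicitly.
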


\begin{proof}
For the first assertion we need only observe that
\[
\{\varphi_{A}=k\}=\{\varphi_{A}>j\}\cap\{\varphi_{A}\circ T^{j}=k-j\}\text{
\quad for }0\leq j<k\text{.}%
\]
Turning to the second statement, note that
\[
\{\varphi_{A}\leq j\}\cap\{\varphi_{A}\circ T^{j}=m\}=\bigcup_{i=1}^{j}%
T^{-i}(A\cap\{\varphi_{A}=m+j-i\})\text{ \quad(disjoint)}%
\]
(decompose the left-hand event according to the last time $i$ in
$\{1,\ldots,j\}$ at which the orbit visits $A$). Hence, by $T$-invariance of
$\mu$,
\[
\mu(\{\varphi_{A}\leq j\}\cap\{\varphi_{A}\circ T^{j}=m\})=\sum_{i=1}^{j}%
\mu(A\cap\{\varphi_{A}=m+j-i\})\text{,}%
\]
which proves the identity in (\ref{Eq_LLL2}).
\end{proof}%

\vspace{0.3cm}%

We can then provide the

\begin{proof}
[\textbf{Proof of Proposition \ref{T_LLT_Hit_DensiFromU}}]\textbf{(i)} To
prove (a) observe that
\[
\left\vert \nu(\varphi_{A_{l}}=k_{l})-\mu(\varphi_{A_{l}}=k_{l})\right\vert
=o(\mu(A_{l}))\text{ \quad}%
\begin{array}
[c]{c}%
\text{as }l\rightarrow\infty\text{, uniformly in}\\
\nu\in\mathcal{V}\text{ and }(k_{l})\in\mathcal{K}_{\delta}(A_{l})\text{.}%
\end{array}
\text{ }%
\]
(Otherwise there exist $\eta>0$, $\nu^{(j)}\in\mathcal{V}$, and $(k_{l}%
^{(j)})_{l\geq1}\in\mathcal{K}_{\delta}(A_{l})$ such that
\[
\left\vert \nu^{(l)}(\varphi_{A_{l}}=k_{l}^{(l)})-\mu(\varphi_{A_{l}}%
=k_{l}^{(l)})\right\vert \geq\eta\mu(A_{l})\text{ \quad for infinitely many
}l\geq1\text{.}%
\]
Set $k_{l}:=k_{l}^{(l)}$ for $l\geq1$, then $(k_{l})\in\mathcal{K}_{\delta
}(A_{l})$ with $\left\vert \nu^{(l)}(\varphi_{A_{l}}=k_{l})-\mu(\varphi
_{A_{l}}=k_{l})\right\vert \geq\eta\mu(A_{l})$ for infinitely many $l\geq1$,
contradicting the definition of an LLT-uniform family $\mathcal{V}$.)
Assertion (a) of the present proposition is now immediate from statement (c)
of Theorem \ref{T_LLT_Hit}.\newline\newline\textbf{(ii)} Turning to part (b),
fix any sequence $(k_{l})\in\mathcal{K}(A_{l})$, w.l.o.g. with $k_{l}\geq2$.
Then the $t_{l}:=\mu(A_{l})k_{l}\ $satisfy $(t_{l})_{l\geq1}\subseteq
K\subseteq(0,\infty)$ for some compact set $K$. Choose any sequence
$(j_{l})_{l\geq1}$ in $\mathbb{N}$ such that $j_{l}<k_{l}$ for $l\geq l_{0}$
and $j_{l}\nearrow\infty$ while $j_{l}=o(k_{l})$ as $l\rightarrow\infty$. Set
$\kappa_{l}:=k_{l}-j_{l}$, then the sequence given by $s_{l}:=\mu(A_{l}%
)\kappa_{l}$, $l\geq1$, satisfies $(s_{l})_{l\geq l_{1}}\subseteq K^{\prime}$
for some $l_{1}\geq l_{0}$ and some compact set $K^{\prime}\subseteq
(0,\infty)$ containing $K$.\newline\newline\textbf{(iii)} The main point now
is to show that
\begin{equation}
\left\vert \int(1_{\{\varphi_{A_{l}}=k_{l}\}}-1_{\{\varphi_{A_{l}}=\kappa
_{l}\}}\circ T^{j_{l}})\,u\,d\mu\right\vert =o\left(  \mu(A_{l})\right)
\text{ }%
\begin{array}
[c]{c}%
\text{as }l\rightarrow\infty\text{,}\\
\text{uniformly in }u\in\mathcal{U}\text{.}%
\end{array}
\text{ }\label{Eq_skip_j_steps}%
\end{equation}
Since $T^{-1}\{\varphi_{A}=k-1\}=\{\varphi_{A}=k\}\cup T^{-1}(A\cap
\{\varphi_{A}=k-1\})$ (disjoint) for $k\geq2$, we get $1_{\{\varphi_{A}%
=k\}}=1_{\{\varphi_{A}=k-1\}}\circ T-1_{A\cap\{\varphi_{A}=k-1\}}\circ T$ for
such $k$. Therefore the left-hand side of (\ref{Eq_skip_j_steps}) is bounded
above by%
\begin{equation}
\left\vert \int(1_{\{\varphi_{A_{l}}=\kappa_{l}\}}\circ T^{j_{l}}%
-1_{\{\varphi_{A_{l}}=k_{l}-1\}}\circ T)\,u\,d\mu\right\vert +\int
(1_{A_{l}\cap\{\varphi_{A_{l}}=k_{l}-1\}}\circ T)\,u\,d\mu\text{.}%
\label{Eq_BBBound}%
\end{equation}
\newline In view of (\ref{Eq_LLL1}) from Lemma \ref{L_Lemma}, we have
\[
1_{\{\varphi_{A_{l}}=\kappa_{l}\}}\circ T^{j_{l}-1}=1_{\{\varphi_{A_{l}}%
=k_{l}-1\}}+1_{\{\varphi_{A_{l}}\leq j_{l}-1\}\cap\{\varphi_{A_{l}}\circ
T^{j_{l}-1}=\kappa_{l}\}}\text{,}%
\]
so that the first expression in (\ref{Eq_BBBound}) equals%
\begin{align*}
&  \left\vert \int(1_{\{\varphi_{A_{l}}=\kappa_{l}\}}\circ T^{j_{l}%
-1}-1_{\{\varphi_{A_{l}}=k_{l}-1\}})\,\widehat{T}u\,d\mu\right\vert \\
&  =\int1_{\{\varphi_{A_{l}}\leq j_{l}-1\}\cap\{\varphi_{A_{l}}\circ
T^{j_{l}-1}=\kappa_{l}\}}\,\widehat{T}u\,d\mu\\
&  \leq\left\Vert \widehat{T}u\right\Vert _{\infty}\,\mu\left(  \{\varphi
_{A_{l}}\leq j_{l}-1\}\cap\{\varphi_{A_{l}}\circ T^{j_{l}-1}=\kappa
_{l}\}\right) \\
&  \leq\sup\limits_{u\in\mathcal{U}}\left\Vert \widehat{T}u\right\Vert
_{\infty}\,\mu\left(  A_{l}\cap\{\kappa_{l}\leq\varphi_{A_{l}}\leq
k_{l}\}\right)  \text{,}%
\end{align*}
\newline where the last step uses (\ref{Eq_LLL2}) of Lemma \ref{L_Lemma}. To
see that
\[
\left\vert \int(1_{\{\varphi_{A_{l}}=\kappa_{l}\}}\circ T^{j_{l}}%
-1_{\{\varphi_{A_{l}}=k_{l}-1\}}\circ T)\,u\,d\mu\right\vert =o\left(
\mu(A_{l})\right)  \text{ }%
\begin{array}
[c]{c}%
\text{as }l\rightarrow\infty\text{,}\\
\text{uniformly in }u\in\mathcal{U}\text{,}%
\end{array}
\]
\newline it thus suffices to validate
\[
\mu_{A_{l}}(\kappa_{l}\leq\varphi_{A_{l}}\leq k_{l})=\mu_{A_{l}}(s_{l}\leq
\mu(A_{l})\varphi_{A_{l}}\leq t_{l})\longrightarrow0\text{ \quad as
}l\rightarrow\infty\text{.}%
\]
But recalling Theorem \ref{T_HittingTimeVsReturnTimeLimits} we know
$\mu_{A_{l}}(\mu(A_{l})\varphi_{A_{l}}\leq s)\Longrightarrow\widetilde{F}(s)$,
and since $s_{l}\sim t_{l}$ with all points in $K^{\prime}$, and
$\widetilde{F}$ uniformly continuous on $K^{\prime}$, this follows at once.

It remains to control the second expression in (\ref{Eq_BBBound}) and to
prove
\[
\int(1_{A_{l}\cap\{\varphi_{A_{l}}=k_{l}-1\}}\circ T)\,u\,d\mu=o\left(
\mu(A_{l})\right)  \text{ }%
\begin{array}
[c]{c}%
\text{as }l\rightarrow\infty\text{,}\\
\text{uniformly in }u\in\mathcal{U}\text{.}%
\end{array}
\]
But the left-hand side is%
\[
\int1_{A_{l}\cap\{\varphi_{A_{l}}=k_{l}-1\}}\,\widehat{T}u\,d\mu\leq
\sup\limits_{u\in\mathcal{U}}\left\Vert \widehat{T}u\right\Vert _{\infty}%
\,\mu_{A_{l}}(\varphi_{A_{l}}=k_{l}-1)\,\mu(A_{l})
\]
with $\,\mu_{A_{l}}(\varphi_{A_{l}}=k_{l}-1)\leq\mu_{A_{l}}(\kappa_{l}%
\leq\varphi_{A_{l}}\leq k_{l})\rightarrow0$, as required.\newline%
\newline\textbf{(iv)} Next, we check that
\begin{equation}
\left\vert \int(1_{\{\varphi_{A_{l}}=\kappa_{l}\}}\circ T^{j_{l}}%
)\,(u-1_{X})\,d\mu\right\vert =o\left(  \mu(A_{l})\right)  \text{ }%
\begin{array}
[c]{c}%
\text{as }l\rightarrow\infty\text{,}\\
\text{uniformly in }u\in\mathcal{U}\text{.}%
\end{array}
\label{Eq_mu_to_u_j_steps}%
\end{equation}
Here, the left-hand expression coincides with
\[
\left\vert \int1_{\{\varphi_{A_{l}}=\kappa_{l}\}}\,\widehat{T}^{j_{l}}%
(u-1_{X})\,d\mu\right\vert \leq\,\parallel\widehat{T}^{j_{l}}u-1_{X}%
\parallel_{\infty}\mu(\varphi_{A_{l}}=\kappa_{l})\text{,}%
\]
and due to assumption (\ref{Eq_UnifUnifCge}) and $j_{l}\nearrow\infty$ we need
only prove that
\begin{equation}
\mu(\varphi_{A_{l}}=\kappa_{l})=O\left(  \mu(A_{l})\right)  \text{ \quad as
}l\rightarrow\infty\text{.}\label{Eq_gdsvchasvchdsgv}%
\end{equation}
But under the present assumptions, Theorem \ref{T_LLT_Hit} c) applies, which
guarantees that $\mu(\varphi_{A_{l}}=\kappa_{l})\sim F^{\prime}(\mu
(A_{l})\kappa_{l})\cdot\mu(A_{l})$. This implies (\ref{Eq_gdsvchasvchdsgv})
since $F^{\prime}\leq1$, see Remark \ref{Rem_PropF}.\newline\newline%
\textbf{(v)} It remains to show that
\begin{equation}
\left\vert u\odot\mu(\varphi_{A_{l}}=k_{l})-\mu(\varphi_{A_{l}}=k_{l}%
)\right\vert =o\left(  \mu(A_{l})\right)  \text{ }%
\begin{array}
[c]{c}%
\text{as }l\rightarrow\infty\text{,}\\
\text{uniformly in }u\in\mathcal{U}\text{.}%
\end{array}
\label{Eq_mu_to_u}%
\end{equation}
The left-hand expression in (\ref{Eq_mu_to_u}) is bounded above by
\begin{multline*}
\left\vert u\odot\mu(\varphi_{A_{l}}=k_{l})-u\odot\mu(\varphi_{A_{l}}\circ
T^{j_{l}}=\kappa_{l})\right\vert \\
+\left\vert u\odot\mu(\varphi_{A_{l}}\circ T^{j_{l}}=\kappa_{l})-\mu
(\varphi_{A_{l}}\circ T^{j_{l}}=\kappa_{l})\right\vert \\
+\left\vert \mu(\varphi_{A_{l}}\circ T^{j_{l}}=\kappa_{l})-\mu(\varphi_{A_{l}%
}=k_{l})\right\vert \text{.}%
\end{multline*}
Now (\ref{Eq_mu_to_u_j_steps}) shows that the middle term is $o\left(
\mu(A_{l})\right)  $ uniformly in $u\in\mathcal{U}$ as $l\rightarrow\infty$,
while (\ref{Eq_skip_j_steps}) confirms that the same is true for the first
term. But since we can assume w.l.o.g. that $1_{X}\in\mathcal{U}$,
(\ref{Eq_skip_j_steps}) also applies to show that the last term is $o\left(
\mu(A_{l})\right)  $ uniformly in $u\in\mathcal{U}$. This proves
(\ref{Eq_mu_to_u}).
\end{proof}%

\vspace{0.3cm}%

We can now turn to LLTs for return times, starting with the

\begin{proof}
[\textbf{Proof of Proposition \ref{P_ookmmmmms}}]\textbf{(i)} We first observe
that
\begin{equation}
\varphi_{B}\neq k\text{\quad on }B:=A\cap\{\varphi_{A}\neq k\}\text{ \qquad
for }A\in\mathcal{A}\text{ and }k\geq1\text{.}\label{Eq_yepyep}%
\end{equation}
Indeed, whenever $B\subseteq A$, then $\varphi_{B}\geq\varphi_{A}$ and
\begin{equation}
\varphi_{B}=\left\{
\begin{array}
[c]{ll}%
\varphi_{A} & \text{on }B\cap T_{A}^{-1}B\text{,}\\
\varphi_{A}+\varphi_{B}\circ T_{A} & \text{on }B\cap T_{A}^{-1}(A\setminus
B)\text{.}%
\end{array}
\right.
\end{equation}
In the special case of $B:=A\cap\{\varphi_{A}\neq k\}$ this immediately gives
\[
\varphi_{B}=\varphi_{A}\neq k\quad\text{on }B\cap T_{A}^{-1}B\text{.}%
\]
On the other hand, for this choice of $B$, we have $\varphi_{A}\circ T_{A}=k $
on $B\cap T_{A}^{-1}(A\setminus B)\subseteq T_{A}^{-1}\{\varphi_{A}=k\}$, and
hence
\[
\varphi_{B}=\varphi_{A}+\varphi_{B}\circ T_{A}\geq1+\varphi_{A}\circ
T_{A}=1+k\quad\text{on }B\cap T_{A}^{-1}(A\setminus B)\text{.}%
\]
\textbf{(ii)} Now, given $(A_{l})$ and $(k_{l})$ as in the statement of the
proposition, take $B_{l}:=A_{l}\cap\{\varphi_{A_{l}}\neq k_{l}\}$. According
to (\ref{Eq_yepyep}), these satisfy $\mu_{B_{l}}(\varphi_{B_{l}}=k_{l})=0$ for
$l\geq1$.

Since $\mu(A_{l})k_{l}\ $converges to a continuity point of $\widetilde{F}$,
(\ref{Eq_kjhgfdsalkjhgfds}) entails $\mu_{A_{l}}(\mu(A_{l})\varphi_{A_{l}}%
=\mu(A_{l})k_{l})\rightarrow0$ as $l\rightarrow\infty$. But $\mu_{A_{l}%
}(\varphi_{A_{l}}=k_{l})=\mu_{A_{l}}(A_{l}\setminus B_{l})$, proving
$\mu(B_{l})\sim\mu(A_{l})$. The latter allows us to deduce
(\ref{Eq_zhbgzhbgzhbgzhbg}) from (\ref{Eq_kjhgfdsalkjhgfds}), see Remark 2.3
in \cite{Z11}.
\end{proof}%

\vspace{0.3cm}%

Next, we prove Theorem \ref{T_LLT_RetGeneral} (which contains Theorem
\ref{T_LLT_Ret0}).

\begin{proof}
[\textbf{Proof of Theorem \ref{T_LLT_RetGeneral}}]\textbf{(a)} Take any
$(k_{l})\in\mathcal{K}(A_{l})$. According to Theorem \ref{T_AutoLLT_Hit}, the
LLT for hitting times holds in the form of (\ref{Eq_AutoLLT_HitExp}).
Moreover, since $\inf_{l\geq1}\mu(A_{l})k_{l}>0$, assumption
(\ref{Eq_ReturningPartGeneral2}) implies that $\sup_{\nu_{l}\in\mathcal{V}%
_{l}}\nu_{l}^{\bullet}\left(  \varphi_{A_{l}}=k_{l}\right)  =0$ for $l\geq
l_{0}$. Therefore (\ref{Eq_ReturningPartGeneral1}) plus
(\ref{Eq_RetEquivHitGeneral}) imply that%
\begin{align}
\nu_{l}(\varphi_{A_{l}}=k_{l}) &  =\nu_{l}(A_{l}^{\bullet})\nu_{l}^{\bullet
}(\varphi_{A_{l}}=k_{l})+\nu_{l}(A_{l}^{\circ})\nu_{l}^{\circ}(\varphi_{A_{l}%
}=k_{l})\nonumber\\
&  \sim\theta\,\nu_{l}^{\circ}(\varphi_{A_{l}}=k_{l})\sim\theta\,\mu
(\varphi_{A_{l}}=k_{l})\nonumber\\
&  \sim\theta^{2}e^{-\theta\mu(A_{l})k_{l}}\,\mu(A_{l})\text{ \quad uniformly
in }(\nu_{l})\in%
{\textstyle\prod\nolimits_{l\geq1}}
\mathcal{V}_{l}\text{.}\label{Eq_Intermediate}%
\end{align}
To see that this asymptotic relation actually holds uniformly in $(\nu_{l})\in%
{\textstyle\prod\nolimits_{l\geq1}}
\mathcal{V}_{l}$ \emph{and} $(k_{l})\in\mathcal{K}_{\delta}(A_{l})$, assume
the contrary. This means that there are some $\varepsilon>0$, infinitely many
indices $1\leq l_{1}<l_{2}<\ldots$ as well as sequences $(\nu_{l}^{(i)})\in%
{\textstyle\prod\nolimits_{l\geq1}}
\mathcal{V}_{l}$ and $(k_{l}^{(i)})\in\mathcal{K}_{\delta}(A_{l})$, $i\geq1$,
such that
\begin{equation}
\left\vert \frac{\nu_{l_{i}}^{(i)}(\varphi_{A_{l_{i}}}=k_{l_{i}}^{(i)}%
)}{\theta^{2}e^{-\theta\mu(A_{l_{i}})k_{l_{i}}^{(i)}}\,\mu(A_{l_{i}}%
)}-1\right\vert \geq\varepsilon\text{ \quad for }i\geq1\text{.}%
\label{Eq_Contrary}%
\end{equation}
Define $A_{i}^{\prime}:=A_{l_{i}}$, $\mathcal{V}_{l}^{\prime}:=\mathcal{V}%
_{l_{i}}$, $(\nu_{i}^{\prime})\in%
{\textstyle\prod\nolimits_{i\geq1}}
\mathcal{V}_{i}^{\prime}$ with $\nu_{i}^{\prime}:=\nu_{l_{i}}^{(i)}$, and
$(k_{i}^{\prime})\in\mathcal{K}_{\delta}(A_{i}^{\prime})$ with $k_{i}^{\prime
}:=k_{l_{i}}^{(i)}$. Then, by the conclusion (\ref{Eq_Intermediate}) of the
previous paragraph,
\[
\nu_{i}^{\prime}(\varphi_{A_{i}^{\prime}}=k_{i}^{\prime})\sim\theta
^{2}e^{-\theta\mu(A_{i}^{\prime})k_{i}^{\prime}}\,\mu(A_{i}^{\prime})\text{
\quad as }i\rightarrow\infty\text{,}%
\]
which clearly contradicts (\ref{Eq_Contrary}).\newline\newline\textbf{(b)} Fix
a sequence $(k_{l})\in\mathcal{K}_{\delta}(A_{l})$ for some $\delta$. Observe
first that there exist $\varepsilon_{l}\in(0,1]$ such that $\varepsilon
_{l}\rightarrow0$ and%
\begin{gather}
\nu_{l}^{\circ}(\,\tau_{l}>\varepsilon_{l}k_{l})\leq\varepsilon_{l}\text{
\quad and \quad}\nu_{l}^{\circ}(\,\{\tau_{l}>\varepsilon_{l}k_{l}%
\}\cap\{\varphi_{A_{l}}=k_{l}\})\leq\varepsilon_{l}\mu(A_{l})\nonumber\\
\text{for }l\geq1\text{ and }(\nu_{l})\in%
{\textstyle\prod\nolimits_{l\geq1}}
\mathcal{V}_{l}\text{.}\label{Eq_OrderOfTheTauLGeneral3}%
\end{gather}
Indeed, as a consequence of (\ref{Eq_OrderOfTheTauLGeneral1}) (or the more
generous condition (\ref{Eq_OrderOfTheTauLGeneral5})) there are indices
$1=l_{0}<l_{1}<\ldots$ such that for every $j\geq0$,
\begin{gather}
\nu_{l}^{\circ}\left(  \tau_{l}>\frac{k_{l}}{2^{j}}\right)  \leq\frac{1}%
{2^{j}}\text{ \quad and \quad}\nu_{l}^{\circ}\left(  \left\{  \tau_{l}%
>\frac{k_{l}}{2^{j}}\right\}  \cap\{\varphi_{A_{l}}=k_{l}\}\right)  \leq
\frac{\mu(A_{l})}{2^{j}}\nonumber\\
\text{for }l\geq l_{j}\text{ and }(\nu_{l})\in%
{\textstyle\prod\nolimits_{l\geq1}}
\mathcal{V}_{l}\text{.}\label{Eq_OrderOfTheTauLGeneral2}%
\end{gather}
Having found these, we can take $\varepsilon_{l}:=2^{-j}$ for $l\in
\{l_{j},\ldots,l_{j+1}-1\}$. Due to (\ref{Eq_OrderOfTheTauLGeneral3}) we have%
\begin{align}
& \left\vert \nu_{l}^{\circ}(\varphi_{A_{l}}=k_{l})-\nu_{l}^{\circ}(\{\tau
_{l}\leq\varepsilon_{l}k_{l}\}\cap\{\varphi_{A_{l}}=k_{l}\})\right\vert
\label{Eq_AAAA}\\
& =\nu_{l}^{\circ}(\{\tau_{l}>\varepsilon_{l}k_{l}\}\cap\{\varphi_{A_{l}%
}=k_{l}\})\nonumber\\
& =o(\mu(A_{l}))\text{ \ uniformly in }(\nu_{l})\in%
{\textstyle\prod\nolimits_{l\geq1}}
\mathcal{V}_{l}\text{.}\nonumber
\end{align}
Recall (\ref{Eq_LLL1}) to see that for $j<k_{l}$,
\begin{align*}
\nu_{l}^{\circ}(\{\tau_{l} &  =j\}\cap T^{-j}\{\varphi_{A_{l}}=k_{l}%
-j\})-\nu_{l}^{\circ}(\{\tau_{l}=j\}\cap\{\varphi_{A_{l}}=k_{l}\})\\
&  =\nu_{l}^{\circ}(\{\tau_{l}=j\}\cap\{\varphi_{A_{l}}\leq j\}\cap
T^{-j}\{\varphi_{A_{l}}=k_{l}-j\})\\
&  \leq\nu_{l}^{\circ}(\{\tau_{l}=j\}\cap\{\varphi_{A_{l}}\leq j\}\cap
T^{-k_{l}}A_{l})\text{.}%
\end{align*}
Consequently, summing over $j\in\{0,\ldots,\left\lfloor \varepsilon_{l}%
k_{l}\right\rfloor \}$, we find via (\ref{Eq_NothingHappensTooQuicklyGeneral})
that%
\begin{align}
&  \left\vert \nu_{l}^{\circ}(\{\tau_{l}\leq\varepsilon_{l}k_{l}%
\}\cap\{\varphi_{A_{l}}=k_{l}\})-\nu_{l}^{\circ}(\{\tau_{l}\leq\varepsilon
_{l}k_{l}\}\cap\{\varphi_{A_{l}}\circ T^{\tau_{l}}=k_{l}-\tau_{l}%
\})\right\vert \label{Eq_AAAAA}\\
&  \leq\nu_{l}^{\circ}(\{\tau_{l}\leq\varepsilon_{l}k_{l}\}\cap\{\varphi
_{A_{l}}\leq\tau_{l}\}\cap T^{-k_{l}}A_{l})\nonumber\\
&  =o(\mu(A_{l}))\text{ \ uniformly in }(\nu_{l})\in%
{\textstyle\prod\nolimits_{l\geq1}}
\mathcal{V}_{l}\text{.}\nonumber
\end{align}
Next,%
\[
\nu_{l}^{\circ}(\{\tau_{l}\leq\varepsilon_{l}k_{l}\}\cap\{\varphi_{A_{l}}\circ
T^{\tau_{l}}=k_{l}-\tau_{l}\})=\sum_{j=0}^{\left\lfloor \varepsilon_{l}%
k_{l}\right\rfloor }\nu_{l}^{\circ}(\{\tau_{l}=j\})\cdot(\nu_{l,\{\tau
_{l}=j\}}^{\circ}\circ T^{-j})(\varphi_{A_{l}}=k_{l}-j)\text{,}%
\]
and in view of (\ref{Eq_SailingAfterTauGeneral}), property (\ref{Eq_LLTgood})
guarantees
\[
\nu_{l,\{\tau_{l}=j_{l}\}}^{\circ}\circ T^{-j_{l}}(\varphi_{A_{l}}=k_{l}%
-j_{l})\sim\mu(\varphi_{A_{l}}=k_{l}-j_{l})\text{ \quad}%
\begin{array}
[c]{c}%
\text{as }l\rightarrow\infty\text{, uniformly in }(\nu_{l})\in%
{\textstyle\prod\nolimits_{l\geq1}}
\mathcal{V}_{l}\\
\text{and }(j_{l})\in%
{\textstyle\prod\nolimits_{l\geq1}}
\{0,\ldots,\left\lfloor \varepsilon_{l}k_{l}\right\rfloor \}
\end{array}
\]
(since for any such sequence $(j_{l})$ we have $(k_{l}-j_{l})\in
\mathcal{K}_{(1-\epsilon)\delta}(A_{l})$.) Moreover,
\[
\mu(\varphi_{A_{l}}=k_{l}-j_{l})\sim\mu(\varphi_{A_{l}}=k_{l})\text{ \quad}%
\begin{array}
[c]{c}%
\text{as }l\rightarrow\infty\text{, uniformly in}\\
(j_{l})\in%
{\textstyle\prod\nolimits_{l\geq1}}
\{0,\ldots,\left\lfloor \varepsilon_{l}k_{l}\right\rfloor \}\text{,}%
\end{array}
\]
because of (\ref{Eq_AutoLLT_HitExp}). Together these show that
\begin{align}
\nu_{l}^{\circ}(\{\tau_{l}  & \leq\varepsilon_{l}k_{l}\}\cap\{\varphi_{A_{l}%
}\circ T^{\tau_{l}}=k_{l}-\tau_{l}\})\label{Eq_AAAAAA}\\
& \sim\nu_{l}^{\circ}(\{\tau_{l}\leq\varepsilon_{l}k_{l}\})\,\mu
(\varphi_{A_{l}}=k_{l})\nonumber\\
\text{as }l  & \rightarrow\infty\text{, uniformly in }(\nu_{l})\in%
{\textstyle\prod\nolimits_{l\geq1}}
\mathcal{V}_{l}\text{.}\nonumber
\end{align}
Finally, (\ref{Eq_OrderOfTheTauLGeneral3}) also ensures that
\begin{equation}
\nu_{l}^{\circ}(\{\tau_{l}\leq\varepsilon_{l}k_{l}\})\longrightarrow1\text{ as
}l\rightarrow\infty\text{, uniformly in }(\nu_{l})\in%
{\textstyle\prod\nolimits_{l\geq1}}
\mathcal{V}_{l}\text{,}\label{Eq_AAAAAAA}%
\end{equation}
and combining (\ref{Eq_AAAA})-(\ref{Eq_AAAAAAA}) gives
(\ref{Eq_RetEquivHitGeneral}).
\end{proof}%

\vspace{0.3cm}%

We can the continue with the

\begin{proof}
[\textbf{Proof of Theorem \ref{T_LLT_ConsecutiveTimes}}]The result about
consecutive times now follows by straightforward induction, and including the
positions does not pose serious difficulties either. \newline\newline%
\textbf{(i) statements a)} \textbf{and} \textbf{a')} Theorem
\ref{T_LLT_RetGeneral} immediately gives ($\clubsuit_{1}$). Under the extra
assumption (\ref{Eq_InvarianceVls}), with $(k_{l})\in\mathcal{K}(A_{l})$, we
have $w_{l}:=\nu_{l,\{\varphi_{A_{l}}=k_{l}^{(1)}\}}\circ T_{A_{l}}^{-1}%
\in\mathcal{V}_{l}$, and hence, whenever $(C_{l}^{(1)})$ and $(c_{l}^{(1)})$
satisfy (\ref{Eq_AsyNuCl}),%
\begin{align*}
\nu_{l}\left(  \{\varphi_{A_{l}}=k_{l}^{(1)}\}\cap T_{A_{l}}^{-1}C_{l}%
^{(1)}\right)   & =\nu_{l}(\varphi_{A_{l}}=k_{l}^{(1)})\,w_{l}(C_{l}^{(1)})\\
& \sim\theta^{2}e^{-\theta\mu(A_{l})k_{l}^{(1)}}\,\mu(A_{l})\,c_{l}%
^{(1)}\text{,}%
\end{align*}
which holds uniformly in $(\nu_{l})\in%
{\textstyle\prod\nolimits_{l\geq1}}
\mathcal{V}_{l}$ (by \ref{Eq_AsyNuCl}) and $(k_{l})\in\mathcal{K}_{\delta
}(A_{l})$ (due to ($\clubsuit_{1}$)). This establishes ($\blacklozenge_{1}$).

For the inductive step fix some $d\geq1$ and assume ($\clubsuit_{d}$). For
$(k_{l}^{(1)}),\ldots,(k_{l}^{(d+1)})\in\mathcal{K}(A_{l})$ and $(\nu_{l})\in%
{\textstyle\prod\nolimits_{l\geq1}}
\mathcal{V}_{l}$,%
\[
\bigcap_{j=1}^{d+1}T_{A_{l}}^{-(j-1)}\{\varphi_{A_{l}}=k_{l}^{(j)}%
\}=\{\varphi_{A_{l}}=k_{l}^{(1)}\}\cap T_{A_{l}}^{-1}\left(  \bigcap_{j=1}%
^{d}T_{A_{l}}^{-(j-1)}\{\varphi_{A_{l}}=k_{l}^{(j+1)}\}\right)  \text{.}%
\]
Writing $\widetilde{\nu}_{l}:=\nu_{l,\{\varphi_{A_{l}}=k_{l}^{(1)}\}}\circ
T_{A_{l}}^{-1}$, which by (\ref{Eq_InvarianceVls}) belongs to $\mathcal{V}%
_{l}$, we find that
\begin{align*}
\nu_{l}\left(  \bigcap_{j=1}^{d+1}T_{A_{l}}^{-(j-1)}\{\varphi_{A_{l}}%
=k_{l}^{(j)}\}\right)   & =\nu_{l}(\varphi_{A_{l}}=k_{l}^{(1)})\,\widetilde
{\nu}_{l}\left(  \bigcap_{j=1}^{d}T_{A_{l}}^{-(j-1)}\{\varphi_{A_{l}}%
=k_{l}^{(j+1)}\}\right) \\
& \sim\theta^{2}e^{-\theta\mu(A_{l})k_{l}^{(1)}}\,\mu(A_{l})\,\theta
^{2d}e^{-\theta\mu(A_{l})\sum_{j=1}^{d}k_{l}^{(j+1)}}\,\mu(A_{l})^{d}%
\end{align*}
as $l\rightarrow\infty$, where ($\clubsuit_{1}$) and ($\clubsuit_{d}$)
guarantee that the asymptotics is uniform in $(\nu_{l})\in%
{\textstyle\prod\nolimits_{l\geq1}}
\mathcal{V}_{l}$ and $(k_{l}^{(1)}),\ldots,(k_{l}^{(d+1)})\in\mathcal{K}%
_{\delta}(A_{l})$. This proves ($\clubsuit_{d+1}$). In the situation of a'),
assume ($\blacklozenge_{d}$). Then, for $(C_{l}^{(j)})$ and $(c_{l}^{(j)})$
satisfying (\ref{Eq_kysdjhfbhjeabgfhjbevb}) and (\ref{Eq_AsyNuCl}), and,
$w_{l}$ as defined above, we get $w_{l,C_{l}^{(1)}}\in\mathcal{V}_{l}$, so
that%
\begin{align*}
& \nu_{l}\left(  \bigcap_{j=1}^{d+1}T_{A_{l}}^{-(j-1)}\left(  \{\varphi
_{A_{l}}=k_{l}^{(j)}\}\cap T_{A_{l}}^{-1}C_{l}^{(j)}\right)  \right) \\
& =\nu_{l}\left(  \{\varphi_{A_{l}}=k_{l}^{(1)}\}\cap T_{A_{l}}^{-1}%
C_{l}^{(1)}\right)  \,w_{l,C_{l}^{(1)}}\left(  \bigcap_{j=1}^{d}T_{A_{l}%
}^{-(j-1)}\left(  \{\varphi_{A_{l}}=k_{l}^{(j+1)}\}\cap T_{A_{l}}^{-1}%
C_{l}^{(j+1)}\right)  \right) \\
& \sim\theta^{2}e^{-\theta\mu(A_{l})k_{l}^{(1)}}\,\mu(A_{l})\,c_{l}%
^{(1)}\,\theta^{2d}e^{-\theta\mu(A_{l})\sum_{j=1}^{d}k_{l}^{(j+1)}}\,\mu
(A_{l})^{d}\prod_{j=1}^{d}c_{l}^{(j+1)}\text{,}%
\end{align*}
with asymptotics uniform in the required sense due to ($\blacklozenge_{1}$)
and ($\blacklozenge_{d}$).\newline\newline\textbf{(ii) statements b)}
\textbf{and} \textbf{b')} In view of Theorem \ref{T_AutoLLT_Hit} and
Proposition \ref{T_LLT_Hit_DensiFromU}, statement ($\spadesuit_{1}$) follows
at once from (\ref{Eq_AssmExpoHTS}). To prove ($\spadesuit_{d+1}$) for
$d\geq1$, we can argue as above: Letting $\widehat{\nu}_{l}:=\nu
_{\{\varphi_{A_{l}}=k_{l}^{(1)}\}}\circ T_{A_{l}}^{-1}\in\mathcal{V}_{l}$ we
find that $\widehat{\nu}_{l}\in\mathcal{V}_{l}$, and
\begin{align*}
\nu\left(  \bigcap_{j=1}^{d+1}T_{A_{l}}^{-(j-1)}\{\varphi_{A_{l}}=k_{l}%
^{(j)}\}\right)   & =\nu(\varphi_{A_{l}}=k_{l}^{(1)})\,\widehat{\nu}%
_{l}\left(  \bigcap_{j=1}^{d}T_{A_{l}}^{-(j-1)}\{\varphi_{A_{l}}=k_{l}%
^{(j+1)}\}\right) \\
& \sim\theta e^{-\theta\mu(A_{l})k_{l}^{(1)}}\,\mu(A_{l})\,\theta
^{2d}e^{-\theta\mu(A_{l})\sum_{j=1}^{d}k_{l}^{(j+1)}}\,\mu(A_{l})^{d}\text{,}%
\end{align*}
which holds uniformly in $\nu\in\mathcal{V}$ and $(k_{l}^{(1)}),\ldots
,(k_{l}^{(d+1)})\in\mathcal{K}_{\delta}(A_{l})$ by ($\spadesuit_{1}$) and
($\clubsuit_{d}$). In the situation of b'), ($\lozenge_{1}$) holds since%
\begin{align*}
\nu\left(  \{\varphi_{A_{l}}=k_{l}^{(1)}\}\cap T_{A_{l}}^{-1}C_{l}%
^{(1)}\right)   & =\nu(\varphi_{A_{l}}=k_{l}^{(1)})\,\widehat{\nu}_{l}%
(C_{l}^{(1)})\\
& \sim\theta e^{-\theta\mu(A_{l})k_{l}^{(1)}}\,\mu(A_{l})\,c_{l}^{(1)}%
\end{align*}
uniformly in $\nu\in\mathcal{V}$ and $(k_{l}^{(1)})\in\mathcal{K}_{\delta
}(A_{l})$, see (\ref{Eq_LLT_HTS_cpt_UVV}) and (\ref{Eq_Hourglass1}). Finally,
use the fact that $\widehat{\nu}_{l,C_{l}^{(j)}}\in\mathcal{V}_{l} $ by
(\ref{Eq_kysdjhfbhjeabgfhjbevb}) to see that%
\begin{align*}
& \nu\left(  \bigcap_{j=1}^{d+1}T_{A_{l}}^{-(j-1)}\left(  \{\varphi_{A_{l}%
}=k_{l}^{(j)}\}\cap T_{A_{l}}^{-1}C_{l}^{(j)}\right)  \right) \\
& =\nu\left(  \{\varphi_{A_{l}}=k_{l}^{(1)}\}\cap T_{A_{l}}^{-1}C_{l}%
^{(1)}\right)  \,\widehat{\nu}_{l,C_{l}^{(1)}}\left(  \bigcap_{j=1}%
^{d}T_{A_{l}}^{-(j-1)}\left(  \{\varphi_{A_{l}}=k_{l}^{(j+1)}\}\cap T_{A_{l}%
}^{-1}C_{l}^{(j+1)}\right)  \right) \\
& \sim\theta e^{-\theta\mu(A_{l})k_{l}^{(1)}}\,\mu(A_{l})\,c_{l}^{(1)}%
\,\theta^{2d}e^{-\theta\mu(A_{l})\sum_{j=1}^{d}k_{l}^{(j+1)}}\,\mu(A_{l}%
)^{d}\prod_{j=1}^{d}c_{l}^{(j+1)}\text{,}%
\end{align*}
with asymptotics uniform in the required sense due to ($\spadesuit_{1}$),
($\blacklozenge_{d}$), ($\lozenge_{1}$) and ($\lozenge_{d}$).
\end{proof}%

\vspace{0.3cm}%

\section{Proofs for Section \ref{Sec_ConcreteLLTs}}%

\noindent
\textbf{Gibbs-Markov maps.} Let $\mathfrak{S}=(X,\mathcal{A},\mu,T,\xi)$ be a
mixing probability preserving Gibbs-Markov system. We review a few well known
basic properties of such systems, all of which are obtained by elementary
routine arguments. Recalling the definition of $\mathrm{R}_{B}(u)$, we see
that for $A\subseteq B$ and $u\geq0$ with $\mathrm{R}_{B}(u)\leq r$ we have
$(1+r\,\mathrm{diam}(A))^{-1}u(y)\leq u(x)\leq(1+r\,\mathrm{diam}(A))\,u(y)$
when $x,y\in A$. If, in addition, $u$ does not vanish anywhere on $A$, then
\begin{equation}
\frac{1}{1+r\,\mathrm{diam}(A)}\leq\frac{u(x)}{u(y)}\leq1+r\,\mathrm{diam}%
(A)\text{ \quad}%
\begin{array}
[c]{c}%
\text{for }u\in\mathcal{U}(r)\text{ and}\\
x,y\in A\subseteq B\in\beta\text{,}%
\end{array}
\label{Eq_RRRegularity}%
\end{equation}
so that%
\begin{equation}
\frac{1}{1+r\,\mathrm{diam}(A)}\,\frac{\mu(A^{\prime})}{\mu(A)}\leq\frac
{\int_{A^{\prime}}u\,d\mu}{\int_{A}u\,d\mu}\leq(1+r\,\mathrm{diam}%
(A))\,\frac{\mu(A^{\prime})}{\mu(A)}\text{ }%
\begin{array}
[c]{c}%
\text{for }A,A^{\prime}\in\mathcal{A}\text{,}\\
A,A^{\prime}\subseteq B\text{.}%
\end{array}
\label{Eq_RatiosOnShortSets}%
\end{equation}
Note that (\ref{Eq_RegularityForGMSys}) says that the normalized image
measures $\mu_{Z}\circ v_{Z}$ with $n\geq1$ and $Z\in\xi_{n}$ have densities
$\widehat{T}^{n}\left(  \mu(Z)^{-1}1_{Z}\right)  $ belonging to $\mathcal{U}%
(r(\mathfrak{S}))$. More generally, one can find $r_{\ast}=r_{\ast
}(\mathfrak{S})\geq r(\mathfrak{S})$ such that (for suitable versions of the
relevant functions)%
\begin{equation}
\mathrm{R}_{T^{n}W}\left(  \widehat{T}^{n}\left(  1_{W}\,u\right)  \right)
\leq r\text{ \quad}%
\begin{array}
[c]{c}%
\text{for }W\subseteq Z\in\xi_{n}\text{ with }n\geq1\text{, }r\geq r_{\ast
}\text{,}\\
\text{and }u\geq0\text{ measurable with }\mathrm{R}_{Z}(u)\leq r\text{.}%
\end{array}
\label{Eq_NeoNeo}%
\end{equation}
In particular (since trivially $\mathrm{R}_{B}(cu)=\mathrm{R}_{B}(u)$ for any
constant $c>0$),
\begin{equation}
\widehat{T}^{n}\left(  \frac{1_{Z}\,u}{\int_{Z}u\,d\mu}\right)  \in
\mathcal{U}(r)\text{ \quad for }r\geq r_{\ast}\text{, }u\in\mathcal{U}%
(r)\text{, }n\geq1\text{ and }Z\in\xi_{n}\text{.}\label{Eq_ImageDensiCylGM}%
\end{equation}
Note that for any $B\subseteq X$ and $r\geq0$,
\begin{equation}
\{u:\mathrm{R}_{B}(u)\leq r\}\text{ is closed under countable convex
combinations. }\label{Eq_NeoNeoNeo}%
\end{equation}
It is clear that $\mathcal{U}(r)\subseteq\mathcal{U}(\widetilde{r})$ for
$0<r<\widetilde{r}$ and that also each
\begin{equation}
\mathcal{U}(r)\text{ is closed under countable convex combinations.}%
\label{Eq_ClosedCtbleConvex}%
\end{equation}
In addition to the facts reviewed above, there are constants $c_{\ast}(r)>0$
such that%
\begin{equation}
\left\Vert \widehat{T}^{n}\left(  \frac{1_{Z}\,u}{\int_{Z}u\,d\mu}\right)
\right\Vert _{\infty}\leq c_{\ast}(r)<\infty\text{\quad for }r>0\text{, }%
u\in\mathcal{U}(r)\text{, }n\geq1\text{ and }Z\in\xi_{n}\text{.}%
\label{Eq_puuuuh}%
\end{equation}
Observe that due to (\ref{Eq_ClosedCtbleConvex}) this estimate not only holds
for all $Z\in\xi_{n}$, but for all $Z=\bigcup_{Z^{\prime}\in\xi^{\prime}%
}Z^{\prime}$ where $\xi^{\prime}\subseteq\xi_{n}$. When applying the abstract
results of Section 2 to a GM-map we shall take, with $r\geq r^{\ast}$,%
\begin{equation}
\mathcal{U}:=\mathcal{U}(r)\text{ and }\mathcal{V}:=\{u\odot\mu:u\in
\mathcal{U}\}\text{.}%
\end{equation}
With this choice we ensure (see, for example, Proposition 1.4 of \cite{AD})
that
\begin{equation}
\widehat{T}\mathcal{U}=\widehat{T}\mathcal{U}(r)\text{ is bounded in
}L_{\infty}(\mu)\text{.}%
\end{equation}
Property (\ref{Eq_RegularityForGMSys}) also implies \emph{bounded distortion}
in that
\begin{equation}
\mu_{Z}(Z\cap T^{-n}A)=e^{\pm r}\mu_{T^{n}Z}(A)\text{ \quad for all }%
n\geq1\text{, }Z\in\xi_{n}\text{, and }A\in\mathcal{A}%
\label{Eq_AbstractAdlerInfty}%
\end{equation}
(with $a=c^{\pm1}b$ shorthand for $c^{-1}b\leq a\leq cb$). In particular,
\begin{equation}
\mu(Z\cap T^{-n}A)=\left(  \frac{e^{r}}{\flat}\right)  ^{\pm1}\mu(Z)\mu
(T^{n}Z\cap A)\text{ \quad for }n\geq1\text{, }Z\in\xi_{n}\text{, }%
A\in\mathcal{A}\text{.}\label{Eq_UpperDistortBound}%
\end{equation}
Also, a simple argument provides constants $\kappa\geq1$ and $q\in(0,1)$ such
that%
\begin{equation}
\mu(Z)\leq\kappa q^{n}\text{ \qquad for all }n\geq1\text{ and }Z\in\xi
_{n}\text{.}\label{Eq_GMMapsContractionOfCylMeasure}%
\end{equation}
It is well known that, for a mixing GM-map, densities $u$ from $\mathcal{U}%
(r)$ are attracted exponentially fast to the invariant density $1_{X}$ under
the action of the transfer operator. Specifically, $\kappa(r)$ and $q$ can be
chosen in such a way that
\begin{equation}
\parallel\widehat{T}^{n}u-1_{X}\parallel_{\infty}\leq\kappa(r)\,q^{n}\text{
\quad for }n\geq1\text{, }r>0\text{ and }u\in\mathcal{U}(r)\text{.}%
\label{Eq_ExpoCgeGM}%
\end{equation}
This follows from (even stronger results derived via) a spectral analysis of
$\widehat{T}$ on Lipschitz spaces (see e.g. \cite{AD}), but it can also be
derived in more elementary ways (similar to \cite{ZKuzmin}, say). Indeed, we
emphasize that the arguments of the present section only rely on the weaker
statement that
\begin{equation}
\text{for }r>0\text{,\quad}\parallel\widehat{T}^{n}u-1_{X}\parallel_{\infty
}\longrightarrow0\text{ \quad as }n\rightarrow\infty\text{, uniformly in }%
u\in\mathcal{U}(r)\text{. }%
\end{equation}
Via Proposition \ref{T_LLT_Hit_DensiFromU} the latter immediately shows that,
whatever $(A_{l})$,%
\begin{equation}
\text{for }r>0\text{,\quad}\mathcal{U}=\mathcal{U}(r)\text{ is LLT-uniform for
}(A_{l})\text{.}\label{Eq_GoodFamily GM1}%
\end{equation}
We can now turn to the%

\vspace{0.3cm}%

\begin{proof}
[\textbf{Proof of Theorem \ref{T_LLTGMCyls1}}]\textbf{(i)} It is well known
that in situation A) as well as in situation B) the hitting times of the
$A_{l}$ satisfy
\begin{equation}
\mu(\mu(A_{l})\varphi_{A_{l}}\leq t)\Longrightarrow1-e^{-\theta t}\text{ \quad
as }l\rightarrow\infty\label{Eq_jshfvbjhesbvfjhvbhjb}%
\end{equation}
(see e.g. Theorem 10.1 of \cite{ZWhenAndWhere}). Now fix any $r>0$. Since
$\mathcal{U}(r^{\prime})\subseteq\mathcal{U}(r)$ and $\mathcal{U}%
_{l}(r^{\prime})\subseteq\mathcal{U}_{l}(r)$ for $0<r^{\prime}<r$, it is clear
that we can assume w.l.o.g. that $r\geq r_{\ast}=r_{\ast}(\mathfrak{S})$. We
are going to employ Theorem \ref{T_LLT_ConsecutiveTimes} a) and b).\newline%
\newline\textbf{(ii)} To this end, we first check the assumptions of Theorem
\ref{T_LLT_RetGeneral} with $A_{l}^{\bullet}:=\varnothing$ in case A), and
$A_{l}^{\bullet}:=A_{l}\cap T^{-p}A_{l}=A_{p}\cap T^{-p}A_{l}=A_{l+p}\in
\xi_{l+p}\ $for $l\geq p$ in case B). Beginning with the conditions
(\ref{Eq_ReturningPartGeneral1}) and (\ref{Eq_ReturningPartGeneral2}), we
recall that these are trivially satisfied in case A) (compare Remark
\ref{Rem_Afterthought1}.b). In case B) we find that
\[
\mu(A_{l}^{\bullet})=\mu(A_{p}\cap T^{-p}A_{l})=\mu(A_{l})\int_{A_{l}}%
v_{A_{p}}^{\prime}\,d\mu_{A_{l}}\text{.}%
\]
Now $x^{\ast}\in A_{l}\subseteq A_{1}\subseteq T^{p}A_{p}$, and $\mathrm{diam}%
(A_{l})\searrow0$. As $v_{A_{p}}^{\prime}$ is continuous on $T^{p}A_{p}$ with
$v_{A_{p}}^{\prime}(x^{\ast})=1-\theta$, we get $\mu(A_{l}^{\bullet}%
)\sim(1-\theta)\mu(A_{l})$ and hence%
\begin{equation}
\mu(A_{l}^{\circ})\sim\theta\mu(A_{l})\text{ \quad as }l\rightarrow
\infty\text{.}\label{Eq_Spades}%
\end{equation}
But if $\nu_{l}\in\mathcal{V}_{l}$, then $\nu_{l}=u_{l}\odot\mu$ with
$u_{l}=1_{A_{l}}u_{l}$ satisfying $\mathrm{R}_{A_{l}}(u)\leq r$, so that
$\nu_{l}(A_{l}^{\circ})=\int_{A_{l}^{\circ}}u_{l}\,d\mu/\int_{A_{l}}%
u_{l}\,d\mu$, and (\ref{Eq_RatiosOnShortSets}) ensures that
\begin{equation}
\frac{1}{1+r\,\mathrm{diam}(A_{l})}\,\frac{\mu(A_{l}^{\circ})}{\mu(A_{l})}%
\leq\nu_{l}(A_{l}^{\circ})\leq(1+r\,\mathrm{diam}(A_{l}))\,\frac{\mu
(A_{l}^{\circ})}{\mu(A_{l})}\text{,}\label{Eq_Checkers}%
\end{equation}
with bounds not depending on $\nu_{l}$ and both converging to $\theta$ since
$\mathrm{diam}(A_{l})\rightarrow0$. This implies
(\ref{Eq_ReturningPartGeneral1}). Finally, observe that $\varphi_{A_{l}}\leq
p$ on $A_{l}^{\bullet}$, which entails (\ref{Eq_ReturningPartGeneral2}%
).\newline\newline\textbf{(iii)} To validate condition
(\ref{Eq_RetEquivHitGeneral}) we show that (\ref{Eq_OrderOfTheTauLGeneral1}%
)-(\ref{Eq_NothingHappensTooQuicklyGeneral}) are satisfied. Define $p:=0$ in
case A) and set $\tau_{l}:=l+p$ both in case A) and in case B). Then
(\ref{Eq_GMMapsContractionOfCylMeasure}) immediately shows $\mu(A_{l}%
)\,\tau_{l}\rightarrow0$, so that condition (\ref{Eq_OrderOfTheTauLGeneral1})
is met in either case.

Recall that $\mathcal{V}$ is a LLT-uniform family of measures, see
(\ref{Eq_GoodFamily GM1}). For $\nu_{l}\in\mathcal{V}_{l}$ with $\nu_{l}%
=u_{l}\odot\mu$ where $u_{l}=1_{A_{l}}u_{l}$ satisfies $\mathrm{R}_{A_{l}%
}(u)\leq r$, we have $d\nu_{l}^{\circ}/d\mu=(%
{\textstyle\int\nolimits_{A_{l}^{\circ}}}
u_{l}\,d\mu)^{-1}1_{A_{l}^{\circ}}u_{l}$, and since $A_{l}^{\bullet}\in
\xi_{l+p}$, the set $A_{l}^{\circ}$ is a union of cylinders from $\xi_{l+p}$.
Properties (\ref{Eq_NeoNeo}) and (\ref{Eq_ClosedCtbleConvex}) thus give
\[
\frac{d(\nu_{l}^{\circ}\circ T^{-\tau_{l}})}{d\mu}=\widehat{T}^{l+p}\left(
\frac{1_{A_{l}^{\circ}}\,u_{l}}{%
{\textstyle\int\nolimits_{A_{l}^{\circ}}}
u_{l}\,d\mu}\right)  \in\mathcal{U}(r)\text{.}%
\]
Hence, $\nu_{l}^{\circ}\circ T^{-\tau_{l}}\in\mathcal{V}$, proving
(\ref{Eq_SailingAfterTauGeneral}), as $A_{l}^{\circ}\cap\{\tau_{l}%
=j\}\in\{A_{l}^{\circ},\varnothing\}$ for constant $\tau_{l}$.\newline%
\newline\textbf{(iv)} As a preparation for the argument to follow below, we
next show that%
\begin{equation}
\text{given }j^{\ast}\text{ there is some }l^{\ast}\text{ such that \quad
}\varphi_{A_{l}}\geq j^{\ast}\text{\ on }A_{l}^{\circ}\text{\quad for }l\geq
l^{\ast}\text{.}\label{Eq_ExiLstar}%
\end{equation}
Note first that for any fixed $i\geq1$ with $T^{i}x^{\ast}\neq x^{\ast}$ there
is some $l^{\ast}(i)\geq1$ such that
\[
T^{i}A_{l}\cap A_{l}=\varnothing\text{ for }l\geq l^{\ast}(i)\text{. }%
\]
(Because $T^{i}$ is continuous at $x^{\ast}$ and $\mathrm{diam}(A_{l}%
)\searrow0$.) To prove (\ref{Eq_ExiLstar}) in case A), apply this to each
$i\in\{1,\ldots,j^{\ast}-1\}$ and take $l^{\ast}:=\max(l^{\ast}(1),\ldots
,l^{\ast}(j^{\ast}-1))$.

Establishing (\ref{Eq_ExiLstar}) in case B) requires a little more thought. We
can easily rule out quick returns by the same argument as in case A), since
taking $l^{\prime}:=\max(l^{\ast}(1),\ldots,l^{\ast}(p-1))$ leads to
\begin{equation}
\varphi_{A_{l}}\geq p\text{\quad on }A_{l}\text{\quad for }l\geq l^{\prime
}\text{.}\label{Eq_jewahfbjeahfb}%
\end{equation}
Now consider potential return times $j\geq p$. Note first that $A_{l}^{\circ}$
is $\xi_{l+p}$-measurable with $A_{l}^{\circ}=\bigcup\nolimits_{W\in
\mathcal{W}_{l}}W$ where $\mathcal{W}_{l}:=\{W\in\xi_{l+p}\setminus
\{A_{l+p}\}:W\subseteq A_{l}\}$. We claim that
\begin{equation}
T^{j}W\cap A_{l}=\varnothing\text{ \quad for }W\in\mathcal{W}_{l}\text{ \quad
if }j\leq l-p\text{ and }l\geq2p\text{.}\label{Eq_NowMoreDetails2}%
\end{equation}
Combined with (\ref{Eq_jewahfbjeahfb}) this gives
\begin{equation}
\varphi_{A_{l}}>l-p\text{ \quad on }A_{l}^{\circ}\text{\quad for }l\geq
\max(2p,l^{\prime})\text{,}%
\end{equation}
which in turn implies (\ref{Eq_ExiLstar}) with $l^{\ast}:=\max(2p,l^{\prime
},j^{\ast}+p)$.

We validate (\ref{Eq_NowMoreDetails2}). Call $p$ a period of a finite tuple
$(V_{1},\ldots,V_{k})$ if $V_{i+p}=V_{i}$ whenever $1\leq i\leq k-p$. Suppose
now that $j\geq p$. Writing $Z_{i}:=\xi(T^{i}x^{\ast})\in\xi$ it is clear that
$p$ is a period of $(Z_{i})_{i\geq0}$ and $A_{n}=[Z_{0},\ldots,Z_{n-1}]$ for
$n\geq1$. Take any $W\in\mathcal{W}_{l}$, then $W=[W_{0},\ldots,W_{l+p-1}]$
with $W_{i}\in\xi$ satisfying $(W_{0},\ldots,W_{l-1})=(Z_{0},\ldots,Z_{l-1})$
(since $W\subseteq A_{l}$), while $(W_{l},\ldots,W_{l+p-1})\neq(Z_{l}%
,\ldots,Z_{l+p-1})$ (because $W\neq A_{l+p}$). This implies that $p$ is not a
period of the tuple $(W_{j},\ldots,W_{l+p-1})$ of length $m:=l+p-j\geq2p$
(since otherwise $W_{i}=Z_{i}$ for all $i<l+p$ and hence $W=A_{l+p}$). Note
also that $T^{j}W\subseteq\lbrack W_{j},\ldots,W_{l+p-1}]\in\xi_{m}$. Now
assume, for a contradiction, that $T^{j}W\cap A_{l}\neq\varnothing$. Then
$[W_{j},\ldots,W_{l+p-1}]\cap\lbrack Z_{0},\ldots,Z_{l-1}]\neq\varnothing$,
and since $j\geq p$ entails $l\geq m$, we conclude that $(W_{j},\ldots
,W_{l+p-1})=(Z_{0},\ldots,Z_{m-1})$ which has $p$ as a period, contradicting
the fact stated above. This proves (\ref{Eq_NowMoreDetails2}).\newline%
\newline\textbf{(v)} It remains to check
(\ref{Eq_NothingHappensTooQuicklyGeneral}) for a given sequence $(k_{l}%
)\in\mathcal{K}(A_{l})$. We actually prove a more general statement: Given
$B_{l}\in\mathcal{A}$, $l\geq1$, we will show that
\begin{align}
\mu_{A_{l}}(\{\varphi_{A_{l}} &  \leq\tau_{l}\}\cap T^{-k_{l}}B_{l})=o\left(
\mu(B_{l})\right)  \text{ \quad and}\label{Eq_qpqpqpqp}\\
\mu_{A_{l}^{\circ}}(\{\varphi_{A_{l}} &  \leq\tau_{l}\}\cap T^{-k_{l}}%
B_{l})=o\left(  \mu(B_{l})\right)  \text{ \quad as }l\rightarrow\infty
\text{.}\nonumber
\end{align}
But for $A_{l}^{\prime}\in\mathcal{A}\cap A_{l}$, statements (\ref{Eq_Spades})
and (\ref{Eq_Checkers}) give, for $\nu_{l}=u_{l}\odot\mu$ as before,
\[
\nu_{l}^{\circ}(A_{l}^{\prime})\leq\nu_{l}^{\circ}(A_{l}^{\circ})^{-1}\nu
_{l}(A_{l}^{\prime})\leq2\theta^{-1}(1+r\,\mathrm{diam}(A_{l}))\nu_{l}%
(A_{l}^{\prime})\text{ \quad for }l\geq l_{0}\text{,}\,
\]
and recalling (\ref{Eq_RatiosOnShortSets}) we have $\nu_{l}(A_{l}^{\prime
})=\int_{A_{l}^{\prime}}u_{l}\,d\mu/\int_{A_{l}}u_{l}\,d\mu\leq
(1+r\,\mathrm{diam}(X))\,\mu_{A_{l}}(A_{l}^{\prime})$. Thus,
\begin{equation}
\nu_{l}^{\circ}(A_{l}^{\prime})\leq2\theta^{-1}(1+r\,\mathrm{diam}%
(X))^{2}\,\mu_{A_{l}}(A_{l}^{\prime})\text{ \quad for }l\geq l_{0}\text{,
}A_{l}^{\prime}\in\mathcal{A}\cap A_{l}\text{,}%
\label{Eq_dsjfbkwaebfkjwebfhkjwbfhbwfhbj}%
\end{equation}
and, taking $A_{l}^{\prime}:=A_{l}\cap\{\varphi_{A_{l}}\leq\tau_{l}\}\cap
T^{-k_{l}}B_{l}$ here, (\ref{Eq_qpqpqpqp}) implies
(\ref{Eq_NothingHappensTooQuicklyGeneral}), since it shows that, more
generally,
\begin{equation}
\nu_{l}^{\circ}(\{\varphi_{A_{l}}\leq\tau_{l}\}\cap T^{-k_{l}}B_{l}%
)=o(\mu(B_{l}))\text{ \quad}%
\begin{array}
[c]{c}%
\text{as }l\rightarrow\infty\text{, uniformly}\\
\text{in }(\nu_{l})\in%
{\textstyle\prod\nolimits_{l\geq1}}
\mathcal{V}_{l}\text{.}%
\end{array}
\label{Eq_qpqpqpqpqpqpqp2}%
\end{equation}
\textbf{(vi)} Consider large problematic values \thinspace$j\in\{l+1,\ldots
,l+p\}$ of $\varphi_{A_{l}}$ first. For such $j$ and indices $l $ so large
that $\mu(A_{l}^{\circ})\geq\theta\mu(A_{l})/2$ and $k_{l}>l+p$ we find,
applying (\ref{Eq_UpperDistortBound}) twice, that
\begin{align*}
\mu_{A_{l}^{\circ}}(\{\varphi_{A_{l}}=j\}\cap T^{-k_{l}}B_{l})\leq & \mu
(A_{l}^{\circ})^{-1}\mu(A_{l}^{\circ}\cap T^{-j}A_{l}\cap T^{-k_{l}}B_{l}))\\
& \leq2\theta^{-1}\mu(A_{l})^{-1}\mu(A_{l}\cap T^{-l}(T^{-j+l}A_{l}\cap
T^{-k_{l}+l}B_{l}))\\
& \leq2\theta^{-1}\flat^{-1}e^{r}\mu(T^{-j+l}A_{l}\cap T^{-k_{l}+l}B_{l})\\
& =2\theta^{-1}\flat^{-1}e^{r}\mu(A_{l}\cap T^{-k_{l}+j}B_{l})\\
& \leq2\theta^{-1}\flat^{-2}e^{2r}\mu(A_{l})\mu(B_{l})\text{.}%
\end{align*}
Consequently,%
\[
\mu_{A_{l}^{\circ}}(\{l<\varphi_{A_{l}}\leq l+p\}\cap T^{-k_{l}}%
B_{l})=o\left(  \mu(B_{l})\right)  \text{ \quad as }l\rightarrow\infty\text{,}%
\]
and by the same argument we obtain the corresponding statement with
$\mu_{A_{l}^{\circ}}$ replaced by $\mu_{A_{l}}$. Therefore (\ref{Eq_qpqpqpqp})
follows once we prove%
\begin{align}
\mu_{A_{l}}(\{\varphi_{A_{l}} &  \leq l\}\cap T^{-k_{l}}B_{l})=o\left(
\mu(B_{l})\right)  \text{ \quad and}\label{Eq_qpqpqpqp2}\\
\mu_{A_{l}^{\circ}}(\{\varphi_{A_{l}} &  \leq l\}\cap T^{-k_{l}}%
B_{l})=o\left(  \mu(B_{l})\right)  \text{ \quad as }l\rightarrow\infty
\text{.}\nonumber
\end{align}
\newline\newline\textbf{(vii)} Note that for integers with $l\geq j\geq1$ and
$k>j+l$ we have $A_{l}\subseteq A_{j}$ and therefore, using
(\ref{Eq_UpperDistortBound}) twice,
\begin{align}
\mu_{A_{l}^{\circ}}(T^{-j}A_{l}\cap T^{-k}B_{l})  & =\mu(A_{l}^{\circ}%
)^{-1}\mu(A_{l}^{\circ}\cap T^{-j}(A_{l}\cap T^{-(k-j)}B_{l}))\nonumber\\
& \leq\mu(A_{l}^{\circ})^{-1}\mu(A_{l}\cap T^{-j}(A_{l}\cap T^{-(k-j)}%
B_{l}))\nonumber\\
& \leq\mu(A_{l}^{\circ})^{-1}\mu(A_{j}\cap T^{-j}(A_{l}\cap T^{-(k-j)}%
B_{l}))\nonumber\\
& \leq\flat^{-1}e^{r}\mu(A_{l}^{\circ})^{-1}\mu(A_{j})\mu(A_{l}\cap
T^{-(k-j)}B_{l})\nonumber\\
& \leq\flat^{-2}e^{2r}\mu(A_{l}^{\circ})^{-1}\mu(A_{j})\mu(A_{l}%
)\mu(T^{-(k-j-l)}B_{l})\nonumber\\
& =\flat^{-2}e^{2r}\mu(A_{l}^{\circ})^{-1}\mu(A_{j})\mu(A_{l})\mu
(B_{l})\text{.}\label{Eq_Clubs}%
\end{align}
The same calculation (with $\mu(A_{l}^{\circ})^{-1}$ replaced by $\mu
(A_{l})^{-1}$) yields
\begin{equation}
\mu_{A_{l}}(T^{-j}A_{l}\cap T^{-k}B_{l})\leq\flat^{-2}e^{2r}\mu(A_{j}%
)\mu(B_{l})\text{.}\label{Eq_Clubs2}%
\end{equation}

Now take any sequence $(k_{l})\in\mathcal{K}(A_{l})$ and pick some
$\varepsilon>0$. Choose $j^{\ast}\geq1$ so large that $2\kappa e^{2r}%
q^{j^{\ast}}<\varepsilon\theta\flat^{2}(1-q)$ and pick $l^{\ast}$ as in
(\ref{Eq_ExiLstar}).

As $(\mu(A_{l})k_{l})$ is bounded, (\ref{Eq_GMMapsContractionOfCylMeasure})
shows that the $k_{l}$ grow exponentially fast, and there is some $l^{\ast
\ast}\geq l^{\ast}$ such that $k_{l}>j^{\ast}+l$ for $l\geq l^{\ast\ast} $. In
view of (\ref{Eq_Spades}) we can also assume that $\mu(A_{l})/\mu(A_{l}%
^{\circ})\leq2/\theta$ for $l\geq l^{\ast\ast}$. Property (\ref{Eq_ExiLstar})
now allows us to employ (\ref{Eq_Clubs}) in the following estimate. For $l\geq
l^{\ast\ast}$,%
\begin{align*}
\mu_{A_{l}^{\circ}}(\{\varphi_{A_{l}}\leq l\}\cap T^{-k_{l}}B_{l})  &
=\mu_{A_{l}^{\circ}}\left(  \left(  \bigcup_{j=j^{\ast}}^{l}T^{-j}%
A_{l}\right)  \cap T^{-k_{l}}B_{l}\right) \\
& \leq\sum_{j=j^{\ast}}^{l}\mu_{A_{l}^{\circ}}\left(  T^{-j}A_{l}\cap
T^{-k_{l}}B_{l}\right) \\
& \leq\frac{2e^{2r}}{\theta\flat^{2}}\,\mu(B_{l})\sum_{j=j^{\ast}}^{l}%
\mu(A_{j})\\
& \leq\frac{2\kappa e^{2r}}{\theta\flat^{2}}\,\mu(B_{l})\sum_{j\geq j^{\ast}%
}q^{j}\\
& <\varepsilon\,\mu(B_{l})\text{,}%
\end{align*}
which proves (\ref{Eq_qpqpqpqp2}) for $\mu_{A_{l}^{\circ}}$, while the
assertion for $\mu_{A_{l}}$ follows in exactly the same way using
(\ref{Eq_Clubs2}) rather than (\ref{Eq_Clubs}).\newline\newline\textbf{(viii)}
Returning to the envisaged application of Theorem \ref{T_LLT_ConsecutiveTimes}
a), we have to validate the extra condition (\ref{Eq_InvarianceVls}). Take
$l\geq1$ and any $\nu_{l}\in\mathcal{V}_{l}$, so that $\nu_{l}$ has density
$u_{l}\in\mathcal{U}_{l}=\{u\in\mathcal{D}(\mu):u=1_{A_{l}}u$ and
$\mathrm{R}_{A_{l}}(u)\leq r\}$. Then,%
\[
w_{l}:=\frac{d(\nu_{l,\{\varphi_{A_{l}}=k_{l}\}}\circ T_{A_{l}}^{-1})}{d\mu
}=\frac{\widehat{T}^{k_{l}}\left(  1_{A_{l}\cap\{\varphi_{A_{l}}=k_{l}\}}%
u_{l}\right)  }{\int_{A_{l}\cap\{\varphi_{A_{l}}=k_{l}\}}u_{l}\,d\mu}\text{,}%
\]
which is obviously supported on $A_{l}$, so that $w_{l}=1_{A_{l}}w_{l}$. To
see that $w_{l}\in\mathcal{U}_{l}$ and hence $\nu_{l,\{\varphi_{A_{l}}%
=k_{l}\}}\circ T_{A_{l}}^{-1}\in\mathcal{V}_{l}$, it remains to show that
$\mathrm{R}_{A_{l}}(w_{l})\leq r$. Since $A_{l}\in\xi_{l}$, we see that the
set $\{\varphi_{A_{l}}=k_{l}\}$, and hence also $A_{l}\cap\{\varphi_{A_{l}%
}=k_{l}\}$, is $\xi_{k_{l}+l}$-measurable,
\[
A_{l}\cap\{\varphi_{A_{l}}=k_{l}\}=\bigcup_{W\in\mathcal{W}_{l}}%
W\text{\quad(disjoint),}%
\]
where $\mathcal{W}_{l}:=\{W\in\xi_{k_{l}+l}:W\subseteq A_{l}\cap
\{\varphi_{A_{l}}=k_{l}\}\}$. Hence,
\[
w_{l}=\sum_{W\in\mathcal{W}_{l}}\frac{\int_{W}u_{l}\,d\mu}{\int_{A_{l}%
\cap\{\varphi_{A_{l}}=k_{l}\}}u_{l}\,d\mu}\cdot\,\widehat{T}^{k_{l}}\left(
\frac{1_{W}\,u_{l}}{\int_{W}u_{l}\,d\mu}\right)  \text{.}%
\]
Since by (\ref{Eq_NeoNeoNeo}) each $\mathcal{U}_{l}$ is closed under countable
convex combinations, we need only show that $\,\widehat{T}^{k_{l}}%
(1_{W}\,u_{l}/\int_{W}u_{l}\,d\mu)\in\mathcal{U}_{l}$ for every $W\in
\mathcal{W}_{l}$. As $\mathrm{R}_{A_{l}}$ is invariant under scaling, the
latter follows once we validate
\begin{equation}
\mathrm{R}_{A_{l}}\left(  \widehat{T}^{k_{l}}(1_{W}\,u_{l})\right)  \leq
r\text{ \qquad for }W\in\mathcal{W}_{l}\text{.}%
\label{Eq_kefjuvfkebfienjkfijnv}%
\end{equation}
Now $W\subseteq A_{l}$ and in view of (\ref{Eq_MbilityOfImages}), $T^{k_{l}%
}W\subseteq A_{l}$ is $\xi_{l}$-measurable and hence coincides with $A_{l}$
(mod $\mu$). But since $\mathrm{R}_{A_{l}}(u)\leq r$, the observation
(\ref{Eq_NeoNeo}) entails (\ref{Eq_kefjuvfkebfienjkfijnv}), thus completing
the proof of (\ref{Eq_InvarianceVls}).

To apply b) of Theorem \ref{T_LLT_ConsecutiveTimes}, it remains to check
(\ref{Eq_Hourglass0}), that is, $\nu_{\{\varphi_{A_{l}}=k_{l}\}}\circ
T_{A_{l}}^{-1}\in\mathcal{V}_{l}$ for $l\geq1$ and $\nu\in\mathcal{V}$. This
follows by a variant of the argument used to prove (\ref{Eq_InvarianceVls}).
\end{proof}%

\vspace{0.3cm}%

The argument for unions of rank-one cylinders is similar:

\begin{proof}
[\textbf{Proof of Theorem \ref{T_LLTGMCyls2}}]\textbf{(i)} Again it is
standard that the hitting time distributions of the $A_{l}$ converge to a
normalized exponential law, $\mu(\mu(A_{l})\varphi_{A_{l}}\leq
t)\Longrightarrow1-e^{-t}$ as $l\rightarrow\infty$ (see e.g. Theorem 10.2 of
\cite{ZWhenAndWhere}). Fix any $r>0$, w.l.o.g. with $r\geq r_{\ast}=r_{\ast
}(\mathfrak{S})$. We are going to apply Theorem \ref{T_LLT_ConsecutiveTimes}
again.\newline\newline\textbf{(ii)} We begin by checking the asumptions of
Theorem \ref{T_LLT_RetGeneral} with $A_{l}^{\bullet}:=\varnothing$,
$A_{l}^{\circ}:=A_{l}$, $\theta:=1$. Assumptions
(\ref{Eq_ReturningPartGeneral1}) and (\ref{Eq_ReturningPartGeneral2}) are
trivially satisfied. The remaining condition (\ref{Eq_RetEquivHitGeneral})
will be shown to hold via (\ref{Eq_OrderOfTheTauLGeneral1}%
)-(\ref{Eq_NothingHappensTooQuicklyGeneral}). Here we take $\tau_{l}:=1$ for
all $l$, so that (\ref{Eq_OrderOfTheTauLGeneral1}) is certainly fulfilled.
Note that as a consequence of (\ref{Eq_GoodFamily GM1}), $\mathcal{V}$ is a
LLT-uniform family for $(A_{l})$: We have $\widehat{T}v\in\mathcal{U}(r)$ for
$v\in\mathcal{V}$, because of the $n=1$ case of (\ref{Eq_ImageDensiCylGM}) and
(\ref{Eq_ClosedCtbleConvex}). As $\mathcal{V}_{l}\subseteq\mathcal{V}$, this
argument also proves (\ref{Eq_SailingAfterTauGeneral}).\newline\newline%
\textbf{(iii)} We still need to validate
(\ref{Eq_NothingHappensTooQuicklyGeneral}), and to this end fix some
$(k_{l})\in\mathcal{K}(A_{l})$ and show first that for any $B_{l}%
\in\mathcal{A}$, and any $\xi$-measurable $C_{l}\subseteq A_{l}$ we have
\begin{equation}
\mu_{C_{l}}(\{\varphi_{A_{l}}\leq1\}\cap T^{-k_{l}}B_{l})=o\left(  \mu
(B_{l})\right)  \text{ \quad as }l\rightarrow\infty\text{.}%
\label{Eq_cvcvcvcvcvvbbdbbsbd}%
\end{equation}
Indeed, taking $l$ so large that $k_{l}>2$ and employing
(\ref{Eq_UpperDistortBound}) twice, we see via $T$-invariance of $\mu$ that%
\begin{align*}
\mu(C_{l}\cap\{\varphi_{A_{l}}\leq1\}\cap T^{-k_{l}}B_{l})=  & \sum_{Z\in
\xi:Z\subseteq C_{l}}\mu(Z\cap T^{-1}(A_{l}\cap T^{-(k_{l}-1)}B_{l}))\\
& \leq\flat^{-1}e^{r}\sum_{Z\in\xi:Z\subseteq C_{l}}\mu(Z)\,\mu(A_{l}\cap
T^{-(k_{l}-1)}B_{l}))\\
& =\flat^{-1}e^{r}\mu(C_{l})\,\mu(A_{l}\cap T^{-1}(T^{-(k_{l}-2)}B_{l}))\\
& \leq\flat^{-2}e^{2r}\mu(C_{l})\,\mu(A_{l})\,\mu(B_{l})\text{,}%
\end{align*}
which immediately yields (\ref{Eq_cvcvcvcvcvvbbdbbsbd}). Then, for $\nu_{l}%
\in\mathcal{V}_{l}$ with density $(\int_{C_{l}}u_{l}\,d\mu)^{-1}1_{C_{l}%
}\,u_{l}$, where $u_{l}\in\mathcal{U}(r)$ and $C_{l}\subseteq A_{l}$ is a
$\xi$-measurable set we obtain, using (\ref{Eq_RRRegularity}) and the
preceding estimate (with $B\cap C_{l}$ in place of $C_{l}$),
\begin{align*}
\nu_{l}(\{\varphi_{A_{l}} &  \leq1\}\cap T^{-k_{l}}B_{l})=\frac{\int
_{C_{l}\cap\{\varphi_{A_{l}}\leq1\}\cap T^{-k_{l}}B_{l}}u_{l}\,d\mu}%
{\int_{C_{l}}u_{l}\,d\mu}\\
&  =\sum_{B\in\beta:\int_{B\cap C_{l}}u_{l}\,d\mu>0}\frac{\int_{B\cap
C_{l}\cap\{\varphi_{A_{l}}\leq1\}\cap T^{-k_{l}}B_{l}}u_{l}\,d\mu}{\int_{B\cap
C_{l}}u_{l}\,d\mu}\,\frac{\int_{B\cap C_{l}}u_{l}\,d\mu}{\int_{C_{l}}%
u_{l}\,d\mu}\\
&  \leq\sum_{B\in\beta:\int_{B\cap C_{l}}u_{l}\,d\mu>0}\frac{\sup_{B}u_{l}%
}{\inf_{B}u_{l}}\,\frac{\mu(B\cap C_{l}\cap\{\varphi_{A_{l}}\leq1\}\cap
T^{-k_{l}}B_{l})}{\mu(B\cap C_{l})}\,\frac{\int_{B\cap C_{l}}u_{l}\,d\mu}%
{\int_{C_{l}}u_{l}\,d\mu}\\
&  \leq(1+r\,\mathrm{diam}(X))\cdot\flat^{-2}e^{2r}\,\mu(A_{l})\,\mu
(B_{l})\cdot\sum_{B\in\beta:\int_{B\cap C_{l}}u_{l}\,d\mu>0}\frac{\int_{B\cap
C_{l}}u_{l}\,d\mu}{\int_{C_{l}}u_{l}\,d\mu}\text{,}%
\end{align*}
where the rightmost sum equals $1$. Taking $B_{l}:=A_{l}$ yields
(\ref{Eq_NothingHappensTooQuicklyGeneral}).\newline\newline\textbf{(iv)} Now
consider the extra conditions (\ref{Eq_InvarianceVls})-(\ref{Eq_Hourglass1})
of Theorem \ref{T_LLT_ConsecutiveTimes}, where we define $c_{l}^{(j)}%
:=\mu_{A_{l}}(C_{l}^{(j)})$.

To take care of (\ref{Eq_InvarianceVls}), pick $\nu_{l}$ with density
$(\int_{C_{l}}u_{l}\,d\mu)^{-1}1_{C_{l}}\,u_{l}$ where $u_{l}\in
\mathcal{U}(r)$ and $C_{l}\subseteq A_{l}$ is some $\xi$-measurable set. Then
$\{\varphi_{A_{l}}\geq k_{l}\}$ is $\xi_{k_{l}}$-measurable, being the union
of all $[Z_{0},\ldots,Z_{k_{l}-1}]$ with $Z_{i}\in\xi$ satisfying $Z_{i}\cap
A_{l}=\varnothing$. For $Z\in\xi_{k_{l}}$ we thus have $Z\cap\{\varphi_{A_{l}%
}=k_{l}\}=Z\cap T^{-k_{l}}A_{l}$ if $Z\subseteq\{\varphi_{A_{l}}\geq k_{l}\}$
while $Z\cap\{\varphi_{A_{l}}=k_{l}\}=\varnothing$ otherwise. Therefore,%
\[
C_{l}\cap\{\varphi_{A_{l}}=k_{l}\}=%
{\displaystyle\bigcup\limits_{Z\in\xi_{k_{l}},Z\subseteq C_{l}\cap
\{\varphi_{A_{l}}\geq k_{l}\}}}
Z\cap T^{-k_{l}}A_{l}\text{ \quad(disjoint),}%
\]
so that the left-hand side of (\ref{Eq_InvarianceVls}) has density
$d(\nu_{l,\{\varphi_{A_{l}}=k_{l}\}}\circ T_{A_{l}}^{-1})/d\mu$ given by%
\begin{equation}
1_{A_{l}}\sum_{Z\in\xi_{k_{l}},Z\subseteq C_{l}\cap\{\varphi_{A_{l}}\geq
k_{l}\}}\frac{\int_{Z}u_{l}\,d\mu}{\int_{C_{l}\cap\{\varphi_{A_{l}}\geq
k_{l}\}}u_{l}\,d\mu}\,\widehat{T}^{k_{l}}\left(  \frac{1_{Z}\,u_{l}}{\int
_{Z}u_{l}\,d\mu}\right)  \text{,}\label{Eq_TdS}%
\end{equation}
which due to (\ref{Eq_ImageDensiCylGM}) and (\ref{Eq_ClosedCtbleConvex}) shows
that $\nu_{l,\{\varphi_{A_{l}}=k_{l}\}}\circ T_{A_{l}}^{-1}$ belongs to
$\mathcal{V}_{l}$. Condition (\ref{Eq_Hourglass0}) follows by the same type of argument.

Validity of (\ref{Eq_kysdjhfbhjeabgfhjbevb}) is immediate from the definition
of $\mathcal{V}_{l}$ since the $C_{l}^{(j)}$ are $\xi$-measurable. Regarding
(\ref{Eq_AsyNuCl}) note that $\nu_{l,\{\varphi_{A_{l}}=k_{l}\}}\circ T_{A_{l}%
}^{-1}$ belongs to $\mathcal{V}_{l}$ and by (\ref{Eq_TdS}) has a density of
the form $(%
{\textstyle\int\nolimits_{A_{l}}}
\widetilde{u}_{l}\,d\mu)^{-1}1_{A_{l}}\widetilde{u}_{l}$ (for a suitable
$\widetilde{u}_{l}\in\mathcal{U}(r)$), so that by (\ref{Eq_RatiosOnShortSets})
and the assumption that $A_{l}\subseteq A_{l}^{\prime}\in\beta$,
\[
\nu_{l,\{\varphi_{A_{l}}=k_{l}\}}\circ T_{A_{l}}^{-1}(C_{l}^{(j)})=\frac
{\int_{C_{l}^{(j)}}\widetilde{u}_{l}\,d\mu}{\int_{A_{l}}\widetilde{u}%
_{l}\,d\mu}=(1+r\,\mathrm{diam}(A_{l}))^{\pm2}\,\mu_{A_{l}}(C_{l}%
^{(j)})\text{,}%
\]
and (\ref{Eq_AsyNuCl}) follows as $\mathrm{diam}(A_{l})\rightarrow0$. Note
that (\ref{Eq_Hourglass1}) follows analogously, because $\nu_{\{\varphi
_{A_{l}}=k_{l}\}}\circ T_{A_{l}}^{-1}$, too, has a density of said type.
\end{proof}%

\vspace{0.3cm}%

To conclude this section, we provide a

\begin{proof}
[\textbf{Proof of the claims made in Example \ref{Ex_CF}}]To derive
(\ref{Eq_CF_LLT1}) from Theorem \ref{T_LLTGMCyls2} B), consider the $\xi
$-measurable sets $A_{l}:=\{\mathsf{a}\geq l\}=%
{\textstyle\bigcup\nolimits_{k\geq l}}
I_{k}$ which satisfy $\mu(A_{l})\sim1/(l\log2)$ and $\mathrm{diam}%
(A_{l})\rightarrow0$ as $l\rightarrow\infty$. Due to stationarity we can
replace the $\tau_{l}^{(j)}$ by the variables $\varphi_{A_{l}}\circ T_{A_{l}%
}^{j-1}$. Take $C_{l}^{(j)}:=I_{a_{l}^{(j)}}\in\xi$, then $\mu(C_{l}%
^{(j)})\sim1/((a_{l}^{(j)})^{2}\,\log2)$. Therefore ($\lozenge_{d}$) with
$c_{l}^{(j)}=\mu_{A_{l}}(C_{l}^{(j)})$ gives (\ref{Eq_CF_LLT1}).

Statement (\ref{Eq_CF_LLTprime}) follows in the same way if we use the sets
$A_{l}^{\prime}:=%
{\textstyle\bigcup\nolimits_{k\geq l:k\text{ prime}}}
I_{k}$ instead of $A_{l}$, which satisfy $\mu(A_{l}^{\prime})\sim1/(l\,\log
l\,\log2)$ as $l\rightarrow\infty$ (compare \cite{SZ}).
\end{proof}%

\vspace{0.3cm}%
%

\vspace{0.3cm}%

\end{document}